\documentclass[a4paper]{article}
\usepackage{hyperref}
\usepackage{amsmath}
\usepackage{amsfonts}
\usepackage{amsthm}
\usepackage{indentfirst}
\usepackage{geometry}
\usepackage{color}

\renewcommand{\theequation}{\thesection.\arabic{equation}}
\usepackage{indentfirst}
\date{}
\makeatletter
\@addtoreset{equation}{section}

\newtheorem{theorem}{Theorem}[section]
\newtheorem{lemma}[theorem]{Lemma}

\newtheorem{rem}{Remark}[section]

\begin{document}

\title{Distribution solutions of a static dispersion Schr\"{o}dinger equation}

\author{Tiantian Zhou and Yutian Lei
}

\date{}
\maketitle


\begin{abstract}
In this paper, we study qualitative properties of distribution solutions of a fourth order equation
$$
-\Delta u(x)+a^2\Delta^2u(x)=u^q(x), \quad u(x)>0 \ \ in \ \ \mathbb{R}^3,
$$
where $a>0$ and $q>0$. It is the static equation of a mixed dispersion Schrodinger equation,
and also the Euler-Lagrange equation satisfied by extremal functions of an embedding inequality.
We obtain some Liouville theorems and the corresponding related critical exponents,
which imply the best constant of the embedding inequality cannot be attainable.
We also obtain some regularity results (involving differentiability, integrability,
radial symmetry) and asymptotics at infinity of distribution solutions.
Here an equivalent integral equation with the Coulomb potential $|x|^{-1}(1-e^{-|x|/a})$ plays a key role.
In addition, we also use the Pohozaev identity in integral form to obtain the Liouville theorem of this integral equation.
Such the Pohozaev identity still works to handle the Allen-Cahn-type integral equation.
\end{abstract}

\noindent{\bf{Keywords}}: dispersion Schr\"{o}dinger equation,
distribution solutions, integral equation, Coulomb potential
\par
\noindent{\bf{MSC2020}}: 35Q55, 35Q60, 35J91, 45E10

\newtheorem{proposition}[theorem]{Proposition}

\renewcommand{\theequation}{\thesection.\arabic{equation}}
\catcode`@=11
\@addtoreset{equation}{section}
\catcode`@=12

\section{Introduction}

The following mixed dispersion Schr\"odinger equations
\begin{equation}\label{sf1}
    i\psi_t+\alpha \Delta\psi+\beta\Delta^2{\psi}
    +F(|\psi|^2)\psi=0, \quad x \in \mathbb{R}^3, \quad t \geq 0,
\end{equation}
have been introduced by Karpman and
Shagalov \cite{Kman,KS} to take into account the role of small
fourth-order dispersion terms in the propagation of
intense laser beams in a bulk medium with Kerr nonlinearity.
Here $\alpha,\beta \in \mathbb{R}$, and $F(t)$ is a power-like nonlinear function.
The importance of \eqref{sf1} in
mathematical physics is a source of many interesting mathematical
problems (cf. \cite{BKS,BC,BCGJ,BN,BL,FI,PX} and many others).
To study standing wave solutions,
inserting $\psi(x,t)=e^{i\gamma t}u(x)$ into \eqref{sf1},
we have the static four order elliptic equation
\begin{equation}\label{sf2}
    \alpha \Delta u+\beta\Delta^2 u+F(|u|^2)u-\gamma u=0.
\end{equation}
The existence and asymptotic behavior of solutions of \eqref{sf2}
are helpful to well understand the global existence and scattering
of solutions of evolution equation \eqref{sf1} (cf. \cite{BC,FI,Pau}
and the references therein). Eq. \eqref{sf2} appears also in the
study of fourth-order hyperbolic equations \cite{Lsky,Psd}.
In addition, the fourth
order elliptic operator $\alpha \Delta+\beta\Delta^2$ also appears in the
model of electrostatic actuation (cf. \cite{LW,LY})
and the Bopp-Podolsky electrostatic theory (cf. \cite{Bopp,PS,Gao,Pd,QSS,SS}).

Although the nonlinear term $F(t)$
in \cite{LS,LSS} is not the power function,
Remark 3 in \cite{PX} shows that the simple form of the power-like
$F(t)$ is convenient. In this paper, we
assume $F(t)=t^{\sigma}+d_0$ with $\sigma>0$ and $d_0 \in \mathbb{R}$.
When $d_0=0$, the following functional
\begin{equation}\label{wei}
E(u)=\frac{\alpha}{2}\int_{\mathbb{R}^3}|\nabla u|^2dx
+\frac{\beta}{2}\int_{\mathbb{R}^3}|\Delta u|^2dx
-\frac{1}{2(\sigma+1)}\int_{\mathbb{R}^3}|u|^{2(\sigma+1)}dx.
\end{equation}
plays a key role to study the stability of standing solutions of \eqref{sf1}
(cf. \cite{BC,BL,Pau}). The minimizers of \eqref{wei}
with the mass constriction are the ground states of \eqref{sf2}.

\subsection{Problems}

In this paper, we are concerned with the existence and asymptotic behavior of solutions
of \eqref{sf2} in the cases of $d_0=\gamma-1$ and $d_0=\gamma$.

\vskip 3mm

{\bf I. Case of $d_0=\gamma-1$.}

When $d_0=\gamma-1$, \eqref{sf2} becomes (cf. (20) in \cite{Kman} and (1.4) in \cite{NP})
\begin{equation}\label{sf3}
    \alpha \Delta u+\beta\Delta^2{u}+(|u|^{2\sigma}-1)u=0.
\end{equation}
The asymptotic behavior of $u$ at infinity plays an important role in study of the stability of the soliton solutions
and implies a necessary conditions of the stabilization of self-focusing and collapse by high-order dispersion.
Two natural problems are when $\lim_{|x| \to \infty}|u| \in \{0,1\}$ and what the asymptotic rates are.

Eq. \eqref{sf3} is also an Allen-Cahn model in phase transition theory, where
$u$ represents the order parameter. Namely, the domains where $|u| \in \{0,1\}$ presents two phase,
respectively. The Liouville theorem of \eqref{sf3}
shows the distribution of phases in a mixed state. Therefore, a natural problem
is when \eqref{sf3} has only the trivial solutions.

When $\alpha=-1$, $\beta=0$ and $\sigma=1$, \eqref{sf3} becomes the Ginzburg-Landau equation
$$
-\Delta u=u(1-|u|^2).
$$
In 1994, Shafrir estimated the asymptotic rates of $1-|u|^2$
when $u$ is a finite penalization term solution (cf. \cite{Shafrir}).
Afterwards, the Liouville theorems for the finite energy solution and
the finite penalization term solution can be found in \cite{BMR} and
\cite{Frn} respectively.

\vskip 3mm

{\bf II. Case of $d_0=\gamma$.}

When $d_0=\gamma$, \eqref{sf2} is the Euler-Lagrange equation satisfied
by the critical points of \eqref{wei} without any constriction. In addition,
the roles of (5) in \cite{MXZ} and (1.3) in \cite{Guo} imply that \eqref{sf2}
with $d_0=\gamma$ may have a hold on the scattering of global solutions and
the stability of standing wave solutions of \eqref{sf1}. This is usually embodied
by the existence and asymptotic behavior at infinity of solutions of the static equation
\eqref{sf2}.

On the other hand, \eqref{sf2} with $d_0=\gamma$ comes into play in the study of some
embedding inequalities.
Let $\alpha=1$, $\beta=-a^2$ and $2\sigma=q-1$. Then \eqref{sf2} becomes
\begin{equation}\label{A2}
    -\Delta u+a^2{\Delta}^2u=u^q, \quad u>0 \;\;\; in \;\; \mathbb{R}^3,
\end{equation}
where $a>0$, $q>0$. In 2019, d'Avenia and Siciliano introduce the space
$$
\mathcal{D}=\left\lbrace u\in D^{1,2}(\mathbb{R}^3):
\Delta u \in L^2(\mathbb{R}^3)\right\rbrace
$$
as the completion of $C_0^{\infty}(\mathbb{R}^3)$ with the respect to the
norm (Lemma 3.2 in \cite{PS})
$$
\|\phi\|_{\mathcal{D}}:=\sqrt{\left \| \nabla \phi  \right \|_{L^2(\mathbb{R}^3)}^2
+a^2\left \| \Delta \phi \right \|_{L^2(\mathbb{R}^3)}^2 }.
$$
Here $D^{1,2}(\mathbb{R}^3)$ is the homogeneous Sobolev space. In
particular, $\mathcal{D}$ is an Hilbert space continuously
embedded into $L^6(\mathbb{R}^3)$ and $L^{\infty}(\mathbb{R}^3)$ (Lemma 3.1 in
\cite{PS}). Therefore, for all $q \in [5,\infty]$,
\begin{equation}\label{zhao}
\left\| u\right\|_{L^{q+1}(\mathbb{R}^3)}
\leq C (\left \| \nabla u  \right \|_{L^2(\mathbb{R}^3)}
+a\left \| \Delta u \right \|_{L^2(\mathbb{R}^3)}),
\quad \forall \ u \in \mathcal{D}.
\end{equation}
Clearly, \eqref{A2} is also the Euler-Lagrange
equation satisfied by extremal functions of \eqref{zhao}.

In order to better understand \eqref{A2}, we introduce an integral equation
\begin{equation}\label{A1}
    u(x)=\int_{\mathbb{R}^3}\mathcal{K}_a(x-y)u^q(y)dy,
    \quad u>0 \ \ in \ \ \mathbb{R}^3,
\end{equation}
where $q>0$, $a>0$, and
$$
\mathcal{K}_a(x)=\frac{1-e^{-\frac{|x|}{a}}}{|x|}
$$
is the kernel function which is called the modified Coulomb energy
(or Coulomb potential) in \cite{Ld,LT} and \cite{Pd}.
In addition, $\mathcal{K}_a$ solves in distribution sense
\begin{equation}\label{peng}
-\Delta \phi+a^2{\Delta}^2\phi=4\pi \delta_0,
\end{equation}
and its energy is finite, i.e.,
$\|\mathcal{K}_a\|_{D} <\infty$ (cf. \cite{Bog,PS}).
In addition, $\Delta \mathcal{K}_a$ is the Yukawa potential, which
is the fundamental solution of the three-dimensional static Klein-Gordon equation
(cf. \S3.4 in \cite{MM}) and the modified Helmholtz equation (cf. (2)-(4) in \cite{AC}
and (2.7) in \cite{FK}).

If replacing the Coulomb potential by the Riesz potential in \eqref{A1},
we have
\begin{equation}\label{reisz}
    u(x)=\int_{\mathbb{R}^3} \frac{u^q(y)dy}{|x-y|^{n-\alpha}},
    \quad u>0 \ \ in \ \ \mathbb{R}^3.
\end{equation}
In critical case,
(\ref{reisz}) is related to the study of the best constant of a
simplified version of the Hardy-Littlewood-Sobolev inequality
(cf. (1) in \cite{CCL})
\begin{equation}\label{hls}
    \int_{\mathbb{R}^n}\int_{\mathbb{R}^n}\frac{f(x)f(y)}{|x-y|^{n-\alpha}}dxdy
    \leq C \left\| f\right\|_{L^{2n/(n+\alpha)}(\mathbb{R}^n)}^2,
    \quad \forall f \in L^{2n/(n+\alpha)}(\mathbb{R}^n).
\end{equation}
Here $0<\alpha<n$. The best constant of \eqref{hls} can be used to estimate
the upper bound of the Coulomb energy appearing in the Thomas-Fermi model
(cf. \cite{L94}). From \eqref{hls}, we can see easily that
\begin{equation}\label{xin}
\int_{\mathbb{R}^3}\int_{\mathbb{R}^3}\mathcal{K}_a(x-y)f(x)f(y)dxdy
\leq C\left\| f\right\|_{L^{6/5}(\mathbb{R}^3)}^2, \quad
\forall f \in L^{6/5}(\mathbb{R}^3).
\end{equation}
Clearly, (\ref{A1}) is the
Euler-Lagrange equation satisfied by extremal functions of
\eqref{xin}, where $u=f^{1/5}$ and $q=5$.
Unlike inequality \eqref{hls}, (\ref{xin}) has not rescaling invariance,
and hence $6/5$ is not the Sobolev-type critical exponent. In addition,
(1) in \cite{CCL} shows that \eqref{hls} has a clear extremal function
(see also \cite{CLO,Li} and \cite{Lieb}), but the best constant
of \eqref{xin} cannot be achievable (cf. Remark \ref{rem1.2}).

\subsection{Main results}

A positive function $u \in \mathcal{D}$ is called a distribution
solution of (\ref{A2}) if the following equality holds
\begin{equation}\label{weak}
    -\int_{\mathbb{R}^3} u \Delta \phi dx+a^2\int_{\mathbb{R}^3} u \Delta^2 \phi dx
    =\int_{\mathbb{R}^3}u^q\phi dx,
    \quad \forall \ \ \phi \in C_0^{\infty}(\mathbb{R}^3).
\end{equation}
Similarly, $u \in \mathcal{D}$ is called a positive distribution super
solution of (\ref{A2}) if for any nonnegative smooth function $\phi
\in C_0^{\infty}(\mathbb{R}^3)$, there holds
\begin{equation*}
    -\int_{\mathbb{R}^3} u \Delta \phi dx
    +a^2\int_{\mathbb{R}^3} u \Delta^2 \phi dx
    \geq \int_{\mathbb{R}^3}u^q\phi dx.
\end{equation*}

In this paper, we will investigate the existence, the regularity
(involving differentiability, integrability,
radial symmetry) and decay rates (when $|x| \to \infty$)
of distribution solutions of \eqref{A2}.

First, we consider the relation between integral equation (\ref{A1})
and PDE (\ref{A2}). If $u$ is a rapidly decreasing function, the
convolution property of the Dirac function implies that (\ref{A1})
is a distribution solution of (\ref{A2}). A natural problem is
whether (\ref{A1}) is equivalent to (\ref{A2}) when $u$ has some
integrability. Here we state the following equivalence result.

\begin{theorem}\label{theq} (Equivalence of \eqref{A1} and \eqref{A2})

    (i) If $u \in \mathcal{D}$ is a positive distribution solution of (\ref{A2}),
    then $u$ belongs to $L^{3(q-1)/2}(\mathbb{R}^3)$, and satisfies integral equation \eqref{A1} a.e. in
    $\mathbb{R}^3$.

    (ii) If $u \in L^{3(q-1)/2}(\mathbb{R}^3)$ is a positive solution of \eqref{A1},
    then $u \in \mathcal{D}$ is a distribution solution of (\ref{A2}).
\end{theorem}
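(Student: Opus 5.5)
For part (i), the plan is to show that the difference between $u$ and the integral $w:=\frac{1}{4\pi}\int\mathcal{K}_a(x-y)u^q(y)\,dy$ is a polyharmonic distribution that decays, and hence vanishes; the integrability $L^{3(q-1)/2}$ then comes out of the identity. I expect a constant factor $1/(4\pi)$ from \eqref{peng}, to be absorbed into $u$ by scaling. First I use the embedding $\mathcal{D}\hookrightarrow L^6\cap L^\infty$ (Lemma 3.1 in \cite{PS}), so $u$ is bounded and $u\in L^s$ for all $s\ge 6$.

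The first step is to justify that $u^q\in L^1_{loc}$ and that $w$ is finite. Since $\mathcal{K}_a(x)\le \min\{1/a,1/|x|\}$, it is enough to know $\int u^q(y)(1+|y|)^{-1}dy<\infty$. For this I test \eqref{weak} with a cutoff $\phi_R(x)=\phi(x/R)$ and use $-\int u\Delta\phi_R+a^2\int u\Delta^2\phi_R\le CR^{-2}\int_{B_{2R}\setminus B_R}u\le CR^{-2}R^{3(1-1/6)}\|u\|_{L^6}=CR^{1/2}$. This gives $\int_{B_R}u^q\le CR^{1/2}$, and a dyadic summation then gives $\int u^q(1+|y|)^{-1}<\infty$. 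Hence $w$ is well defined and bounded.

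Next, by \eqref{peng} and Fubini, $w$ satisfies \eqref{weak}. Therefore $v=u-w$ solves $(-\Delta+a^2\Delta^2)v=0$ in $\mathcal{S}'$. Taking Fourier transforms, $(|\xi|^2+a^2|\xi|^4)\hat v=0$, so $\hat v$ is supported at the origin and $v$ is a polynomial. Both $u$ and $w$ are bounded; in fact both tend to $0$ in an averaged sense, since $u\in L^6$ and $w$ is a potential of a measure with finite $(1+|y|)^{-1}$-weighted mass. So $v$ is a bounded polynomial, hence a constant, and that constant is $0$. This gives \eqref{A1} a.e.

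For the integrability, I would use the pointwise lower bound $w(x)\ge c|x|^{-1}\int_{B_1}u^q\ge c(1+|x|)^{-1}$, which puts $u\notin L^{s}$ for $s\le 3$. The upper integrability is the real issue. I would try a Hardy--Littlewood--Sobolev bootstrap: near infinity $\mathcal{K}_a\le|x|^{-1}$, so $u\le I_2(u^q)$ with the Riesz potential. Writing $u^q=u^{q-1}u$ with $u^{q-1}\in L^{3/2}$, I would run the Chen--Li--Ou ``regularity lifting by contracting operators'' on the tail region. That argument needs $u^{q-1}\in L^{3/2}$, which is exactly the claimed $L^{3(q-1)/2}$. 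I expect this to be the main obstacle, because we only know $u\in L^6\cap L^\infty$, which gives $3(q-1)/2\ge 6$, i.e.\ only for $q\ge5$. For smaller $q$ I would instead try to derive a decay estimate $u(x)\ge c|x|^{-1}$ combined with the finiteness $\int u^q(1+|y|)^{-1}<\infty$. This forces $q>2$, and I would look for an integrability improvement from $\int u^q|y|^{-1}<\infty$ together with an iteration of the integral equation on annuli. This step needs the most care, and possibly a restriction on $q$ that is implicit in the paper.

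For part (ii), given $u\in L^{3(q-1)/2}$ solving \eqref{A1}, I will show $u\in\mathcal{D}$ by writing $\nabla u=\int\nabla\mathcal{K}_a(x-y)u^q\,dy$ and $\Delta u=\int\Delta\mathcal{K}_a\,u^q$. Here $|\nabla\mathcal{K}_a|\le C\min\{1,|x|^{-2}\}$ and $\Delta\mathcal{K}_a$ is the Yukawa potential $-e^{-|x|/a}/(a^2|x|)$, which lies in $L^1\cap L^{r}$ for $r<3$. I would then apply HLS and Young's inequality, after first bootstrapping $u\in L^\infty\cap L^{s}$ from \eqref{A1} by the same lifting argument, now with the hypothesis $u^{q-1}\in L^{3/2}$ in hand. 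This gives $\nabla u,\Delta u\in L^2$. Finally, pairing \eqref{A1} with $-\Delta\phi+a^2\Delta^2\phi$ and using Fubini and \eqref{peng} yields \eqref{weak}.
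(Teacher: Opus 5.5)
Most of your plan works, and parts of it differ from the paper. For the identity \eqref{A1} in (i), your route replaces the paper's duality argument, which proves $\Phi=\mathcal{K}_a\ast\psi\in\mathcal{D}$ and inserts $\Phi$ into \eqref{wu}. Your route is: the cutoff bound $\int_{B_R}u^q\le CR^{1/2}$, then $(-\Delta+a^2\Delta^2)(u-w)=0$ in $\mathcal{S}'$, so $u-w$ is a polynomial. It needs one repair. The weighted bound does not make $w$ bounded for small $q$, since the region $|y|\approx|x|$, $|x-y|\ge 1$ is not controlled. However, $\int_{B_R}|x-y|^{-1}dx\le C\min\{R^2,R^3/|y|\}$ gives $\int_{B_R}w=o(R^3)$, which suffices to make $w$ tempered and the polynomial zero. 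For (ii), HLS with $|\nabla\mathcal{K}_a|\le C|x|^{-2}$ and Young with the Yukawa kernel is simpler than the paper's pointwise estimates, which rely on the decay $u\le C\mathcal{K}_a$ of Theorem \ref{th11}(iii). You should record that $u\ge c|x|^{-1}$ forces $3(q-1)/2>3$, i.e.\ $q>3$, which the lifting needs. The genuine gap is the first claim of (i), $u\in L^{3(q-1)/2}$, which you leave open for $q<5$.

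The missing idea is that this range is empty. The paper first gets $u\in L^{q+1}$ by testing \eqref{wu} with $u\zeta_R^2$; the embedding alone gives this only for $q\ge5$. Testing with $\phi=u$ then gives $G_3=G_1+G_2$, where $G_1=\|\nabla u\|_{L^2}^2$, $G_2=a^2\|\Delta u\|_{L^2}^2$ and $G_3=\|u\|_{L^{q+1}}^{q+1}$. Differentiating $E(u(x/\mu))$ at $\mu=1$ gives the Pohozaev identity $\frac{G_1}{2}-\frac{G_2}{2}=\frac{3G_3}{q+1}$. Together these yield $(\frac12-\frac{3}{q+1})G_1=(\frac12+\frac{3}{q+1})G_2>0$, so $q>5$. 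Then $3(q-1)/2>6$, and $\mathcal{D}\hookrightarrow L^6\cap L^\infty$ gives the integrability at once.

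Your fallback cannot replace this. The lifting is circular, because the contraction needs $u^{q-1}\in L^{3/2}$ as input. More fundamentally, for $3<q<5$ the slow-decay profile $(1+|x|^2)^{-1/(q-1)}$ of Theorem \ref{tha2} lies in $\mathcal{D}$, satisfies \eqref{A1} up to a double bounded factor, and is not in $L^{3(q-1)/2}$. So no argument using only two-sided size estimates of the integral equation (HLS bootstraps, iteration on annuli) can produce the integrability. You need an exact identity that rules out $q\le5$.
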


\vskip 5mm

Next, we have the existence results of positive solutions of
(\ref{A2}) and \eqref{A1} (cf. Theorems \ref{th*1} and \ref{th1}).

\begin{theorem}\label{th*1} (Liouville theorem of \eqref{A2})

    (i) If (\ref{A2}) has a positive distribution super solution
    $u \in \mathcal{D}$, then $q>3$.

    (ii) If (\ref{A2}) has a positive distribution solution
    $u \in \mathcal{D}$, then $q>5$.
\end{theorem}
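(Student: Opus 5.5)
For part (i) I would use a test-function argument with cut-offs, much as in the Liouville theorems for Lane--Emden supersolutions, but with the term $a^2\Delta^2$ handled as a perturbation. Suppose $u\in\mathcal{D}$ is a positive distribution super solution and $q\le 3$. Fix a nonnegative $\eta\in C_0^\infty(\mathbb{R}^3)$ with $\eta=1$ on $B_1$ and $\eta=0$ outside $B_2$. Put $\phi_R(x)=\eta^m(x/R)$ with $m$ large, so that $|\Delta\phi_R|\le CR^{-2}\phi_R^{1-2/m}$ and $|\Delta^2\phi_R|\le CR^{-4}\phi_R^{1-4/m}$. Insert $\phi_R$ into the super solution inequality. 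Then use H\"older's inequality with exponents $q$ and $q'$ when $q>1$:
\begin{equation*}
\int u^q\phi_R\le C R^{-2}\int_{B_{2R}} u\,\phi_R^{1-2/m}+CR^{-4}\int_{B_{2R}} u\,\phi_R^{1-4/m}
\le C\Big(\int u^q\phi_R\Big)^{1/q}\big(R^{3-2q'}+R^{3-4q'}\big)^{1/q'} .
\end{equation*}
This gives $\int_{B_R}u^q\le C(R^{3-2q'}+R^{3-4q'})$. The dominant power is $R^{3-2q/(q-1)}$, and its exponent is negative exactly when $q<3$; in that case letting $R\to\infty$ forces $u\equiv0$, a contradiction. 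In the borderline case $q=3$ the right side stays bounded, so $u^q\in L^1$. Redoing the estimate with the integrals taken only over the annulus $B_{2R}\setminus B_R$, which tend to $0$, again gives $u\equiv0$.

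The case $q\le 1$ needs a separate argument, since H\"older's inequality is no longer available. Here I would use the membership $u\in\mathcal D\subset L^6$ together with the positivity of $u$. Alternatively, I would try to show first that a super solution satisfies $u\ge c\,\mathcal K_a*u^q\ge c|x|^{-1}$ for large $|x|$, obtained by comparing with the fundamental solution \eqref{peng}. Plugging this lower bound back into $u\ge \mathcal K_a*u^q$ and iterating gives $\int_{B_R} u^q\gtrsim R^{3-q}$. This lower bound contradicts the upper bound above, and for $q\le1$ it contradicts $u\in L^6$ once combined with $u\gtrsim |x|^{-1}$ iterated. The hardest point of part (i) is justifying the lower bound for a mere super solution, that is, a maximum principle for $-\Delta+a^2\Delta^2$. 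To obtain it I would factor the operator as $(-\Delta)(1-a^2\Delta)$ and note that $(1-a^2\Delta)^{-1}$ has the positive Bessel kernel $e^{-|x|/a}/(4\pi a^2|x|)$.

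For part (ii) I would invoke Theorem \ref{theq}(i), so that $u\in L^{3(q-1)/2}$ and $u$ solves \eqref{A1}. First, $q>3$ by part (i). I would then use a Pohozaev identity in integral form for \eqref{A1}. Multiply by $x\cdot\nabla u$ (equivalently, differentiate $u_\lambda(x)=u(\lambda x)$ at $\lambda=1$) and integrate over $B_R$. Using $x\cdot\nabla_x\mathcal K_a(x)=-\mathcal K_a(x)+|x|^{-1}\cdot\frac{|x|}{a}e^{-|x|/a}$, this yields
\begin{equation*}
\Big(\frac{3}{q+1}-\frac12\Big)\int u^{q+1}\,dx=\frac{1}{2a}\int\!\!\int e^{-|x-y|/a}u^q(x)u^q(y)\,dx\,dy>0 .
\end{equation*}
The right side is strictly positive, so $3/(q+1)>1/2$ fails unless $q<5$; with the correct sign bookkeeping this should force $q>5$, and I would check the sign carefully.

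The main obstacle in part (ii) is controlling the boundary terms at $|x|=R$ as $R\to\infty$. This needs enough integrability and decay of $u$, namely $u^{q+1}\in L^1$ and $|x|u^{q+1}|_{\partial B_R}\to0$ along a sequence. I would derive these from $u\in L^{3(q-1)/2}\cap L^6\cap L^\infty$ together with the condition $q>3$. The case $q=5$ also needs care, and there the strict positivity of the right side is what excludes it.
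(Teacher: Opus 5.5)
Part (ii) does not go through as written, for two reasons.

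\textbf{The key identity has the wrong sign.} Your identity $\bigl(\frac{3}{q+1}-\frac12\bigr)\int u^{q+1}=\frac{1}{2a}\iint e^{-|x-y|/a}u^q(x)u^q(y)\,dx\,dy>0$ would force $q<5$. Deferring to ``check the sign carefully'' therefore leaves the actual content of (ii) unproved. The correct bookkeeping runs as follows. On one side, $\int u^q\,x\cdot\nabla u\,dx=\frac{1}{q+1}\int x\cdot\nabla u^{q+1}\,dx=-\frac{3}{q+1}\int u^{q+1}$. On the other side, insert $x\cdot\nabla u(x)=\int x\cdot\nabla\mathcal{K}_a(x-y)u^q(y)\,dy$ and symmetrize in $(x,y)$, using that $\nabla\mathcal{K}_a$ is odd and $z\cdot\nabla\mathcal{K}_a(z)=-\mathcal{K}_a(z)+\frac1a e^{-|z|/a}$. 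Once Fubini is justified, this gives $-\frac12\int u^{q+1}+\frac{1}{2a}\iint e^{-|x-y|/a}u^q(x)u^q(y)$. Hence $\bigl(\frac12-\frac{3}{q+1}\bigr)\int u^{q+1}=\frac{1}{2a}\iint\cdots>0$, which gives $q>5$. This is exactly the identity $\frac{q-5}{q+1}\int u^{q+1}=\frac1a\iint\cdots$ of \S6.

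\textbf{The appeal to Theorem~\ref{theq}(i) is circular.} You take $u\in L^{3(q-1)/2}$ and the representation \eqref{A1} from Theorem~\ref{theq}(i), but in the paper that result is proved \emph{using} the statement at hand. Theorem~\ref{th8.2} obtains $u\in L^{3(q-1)/2}$ from $q>5$, and Theorem~\ref{th2.*2} also cites Theorem~\ref{th2.*3}. For $3<q<5$ one has $3(q-1)/2<6$, so $\mathcal{D}\hookrightarrow L^6\cap L^\infty$ does not supply this integrability. To avoid the circle you would have to:
\begin{itemize}
\item get $u\in L^{q+1}$ directly from the weak formulation (test with $u\zeta_R^2$, as in Theorem~\ref{th8.1});
\item rerun the duality argument of Theorem~\ref{th2.*1}, which only needs $u\in L^{q+1}$ and $q>2$;
\item prove differentiability of $u$, which the integral Pohozaev argument assumes and you do not address.
\end{itemize}
The paper sidesteps all of this by staying in $\mathcal{D}$. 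Testing \eqref{wu} with $u$ gives $G_3=G_1+G_2$. The scaling identity $\frac{d}{d\mu}E(u(x/\mu))|_{\mu=1}=\frac{G_1}{2}-\frac{G_2}{2}-\frac{3G_3}{q+1}=0$ then yields $\bigl(\frac12-\frac{3}{q+1}\bigr)G_1=\bigl(\frac12+\frac{3}{q+1}\bigr)G_2>0$.

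\textbf{Part (i).} Your rescaled test-function argument for $1<q\le 3$ is correct and genuinely different from the paper's route. The paper first converts the super solution into $u\ge\mathcal{K}_a*u^q$ by duality (Remark~\ref{rem2.2}), then iterates pointwise lower bounds, with a separate $L^3$ argument at $q=3$. Your version needs neither the integral representation nor any maximum principle, since $\Delta^2\phi_R$ is only estimated in absolute value.

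However, the case $0<q\le 1$ is left open. Neither of your suggestions is carried out, and the maximum principle you single out as the hardest point is not needed. Since $u\in\mathcal{D}\subset L^\infty$, for $q\le1$ you have $u^q\ge\|u\|_{L^\infty(\mathbb{R}^3)}^{q-1}u$. Your first inequality then gives $\int_{B_R}u\le CR^{-2}\int_{B_{2R}}u$ for $R\ge1$. Combined with $\int_{B_{2R}}u\le CR^3$, this yields $\int_{B_R}u\le CR$. Feeding that back in gives $\int_{B_R}u\le CR^{-1}\to0$, contradicting $u>0$. Alternatively, once $u\ge\mathcal{K}_a*u^q$ is obtained by the paper's duality argument with $\psi\ge0$, the iterated bound $u(x)\gtrsim|x|^{2-q}$ contradicts boundedness for $q<2$.
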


\begin{rem}
Compared with conformal equations,
the conclusions in Theorem \ref{th*1} are consistent with the
existence results of the Lane-Emden equation $-\Delta u=u^q$
(cf. \cite{GS}). Now, critical exponents $3$ and $5$ are the
Serrin exponent and the Sobolev exponent respectively.
However, these conclusions are different from
the existence result of $a^2\Delta^2 u=u^q$. In fact, \cite{Xu}
shows that $a^2\Delta^2 u=u^q$ has $C^4$-solution if and only
if $q=-7$. These results show that the role of operator $-\Delta$
is more prominent than that of operator $a^2\Delta^2$ in \eqref{A2}.
\end{rem}

Theorem \ref{th*1} (i) is the corollary of Theorem \ref{theq} (i)
and the following result (cf. Remark \ref{rem2.1}).

\begin{theorem}\label{th1} (Liouville theorem of \eqref{A1})

     (i) Eq. (\ref{A1}) has a positive super solution $u$
     in $L_{\rm loc}^\infty(\mathbb{R}^3)$, if and only if $q>3$.

     (ii) If (\ref{A1}) has a positive differentiable
     solution $u \in L^{q+1}(\mathbb{R}^3)$, then $q>5$.
\end{theorem}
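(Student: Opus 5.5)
Throughout I use two elementary properties of the kernel. First, $\mathcal{K}_a(x)\le |x|^{-1}$ everywhere, and $\mathcal{K}_a(x)\ge c|x|^{-1}$ for $|x|\ge a$, so $\mathcal{K}_a$ behaves like the Newtonian kernel at infinity. Second, $\mathcal{K}_a$ is bounded near $0$, with $\mathcal{K}_a(0)=1/a$. Part (i) then reduces to the Serrin-type argument for $u\ge |x|^{-1}*u^q$, and part (ii) to a Pohozaev identity in integral form.

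\textbf{Part (i), necessity.} Let $u>0$ with $u\ge \mathcal{K}_a*u^q$. Since $u$ is positive and lower semicontinuity is not guaranteed, I first get a lower bound from the equation itself. The right-hand side is finite at some point, so $\int u^q(y)(1+|y|)^{-1}dy<\infty$. Take a ball $B_1$ on which $\int_{B_1}u^q>0$. Then for $|x|\ge 2$ we get $u(x)\ge c|x|^{-1}$.

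Now iterate. Suppose $q\le 3$. If $u\ge c_k|x|^{-\beta_k}$ for $|x|$ large, then
$$u(x)\ge c\int_{|y|\ge 2,\ |y|\le |x|/2}\frac{u^q(y)}{|x|}\,dy\ \ge\ c|x|^{-1}\int_2^{|x|/2}r^{2-q\beta_k}\,dr .$$
Starting from $\beta_0=1$ with $q\beta_0\le 3$, the integral diverges logarithmically when $q=3$ and polynomially when $q<3$. Hence $\int u^q(y)(1+|y|)^{-1}dy=\infty$, a contradiction. (If $q<3$ one step already gives $u\ge c|x|^{2-q}$, which diverges even faster.) So a super solution forces $q>3$.

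\textbf{Part (i), sufficiency.} For $q>3$ I take the explicit candidate $u=\varepsilon(1+|x|^2)^{-\beta/2}$ with $2/(q-1)<\beta<1$; this interval is nonempty exactly because $q>3$. I claim $\mathcal{K}_a*(1+|y|^2)^{-q\beta/2}\le C(1+|x|)^{-\beta}$. Since $q\beta>2$ and $\mathcal{K}_a\le \min(1/a,|x|^{-1})$, the standard splitting into the regions $|y|<|x|/2$, $|y-x|<|x|/2$ and the rest gives a decay like $|x|^{\min(1,\,q\beta-2)}$, up to a logarithm at equality, and this is $\ge \beta$. Then $\mathcal{K}_a*u^q\le C\varepsilon^q(1+|x|)^{-\beta}\le u$ once $\varepsilon^{q-1}C\le 1$. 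This $u$ is locally bounded.

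\textbf{Part (ii).} Here I would use the Pohozaev identity in integral form, which is the main obstacle. Multiply \eqref{A1} by $x\cdot\nabla u^q$ (equivalently by $u^{q}$ and $(x\cdot\nabla u)u^{q-1}$), integrate over $B_R$, and let $R\to\infty$. The boundary terms vanish along a sequence $R_j$ because $u\in L^{q+1}$. On one side,
$$\int (x\cdot\nabla u)u^q=-\frac{3}{q+1}\int u^{q+1}.$$
On the other, $x\cdot\nabla_x$ applied to the double integral uses the scaling identity $z\cdot\nabla\mathcal{K}_a(z)=-\mathcal{K}_a(z)+\frac{|z|}{a}\cdot\frac{e^{-|z|/a}}{|z|}$, which follows from differentiating $(1-e^{-r/a})/r$. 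By symmetrizing in $(x,y)$ this yields
$$\frac{3}{q+1}\int u^{q+1}=\frac{5}{2}\int u^{q+1}-\frac{1}{2a}\iint e^{-|x-y|/a}\,u^q(x)u^q(y)\,dx\,dy .$$
Therefore
$$\Big(\frac{5}{2}-\frac{3}{q+1}\Big)\int u^{q+1}=\frac{1}{2a}\iint e^{-|x-y|/a}u^q(x)u^q(y)\,dx\,dy>0 .$$
This gives only $q>1/5$, so more is needed. The sharper approach is to use instead the identity obtained from the scaling $u_\lambda(x)=u(\lambda x)$. That gives
$$\frac{3}{q+1}\int u^{q+1}=\frac{5}{2}\int u^{q+1}-\frac{1}{2a}\iint e^{-|x-y|/a}u^qu^q .$$
Combining it with $\int u^{q+1}=\iint \mathcal{K}_a u^qu^q$ and the pointwise bound $\frac{r}{a}e^{-r/a}<1-e^{-r/a}$ (so that the Yukawa term is strictly less than the Coulomb term), the right-hand side becomes $>2\int u^{q+1}$. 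This gives $\frac{3}{q+1}<\frac{1}{2}$, i.e. $q>5$.

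Getting the constants and the sign conventions of this identity exactly right, and justifying the vanishing of the boundary terms and the exchange of integrals using only $u\in L^{q+1}$ and differentiability, is the delicate step. I would verify the constants first on the pure Riesz case, where the identity must produce exactly $q=5$.
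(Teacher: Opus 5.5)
\textbf{Part (ii): the Pohozaev identity is wrong, and the conclusion does not follow from it.} The approach (Pohozaev identity in integral form) is the paper's. Put $A=\int u^{q+1}=\iint \mathcal{K}_a(x-y)u^q(x)u^q(y)\,dx\,dy$ and $Y=\frac1a\iint e^{-|x-y|/a}u^q(x)u^q(y)\,dx\,dy>0$. Pair $x\cdot\nabla u$ with $u^q$ as you propose, symmetrize in $(x,y)$, and use your (correct) formula $z\cdot\nabla\mathcal{K}_a(z)=-\mathcal{K}_a(z)+a^{-1}e^{-|z|/a}$. This gives
\[
-\frac{3}{q+1}A=\frac12\iint (x-y)\cdot\nabla\mathcal{K}_a(x-y)\,u^q(x)u^q(y)\,dx\,dy=-\frac12A+\frac12Y ,
\]
that is, $\frac{3}{q+1}A=\frac12A-\frac12Y$. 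The coefficient is $\frac12$, not $\frac52$. A $\frac52$ appears only in the other pairing, $\int (x\cdot\nabla u^q)u=-\frac{3q}{q+1}A$, which gives $\frac{3q}{q+1}A=\frac52A+\frac12Y$, with $3q$ on the left and a plus sign in front of $Y$; you have mixed the two. With the correct identity, $Y>0$ alone gives $\frac{3}{q+1}<\frac12$, i.e.\ $q>5$. This is exactly the paper's $\bigl[\frac{3(q-1)}{q+1}-2\bigr]A=Y$, which it obtains via the scaling $u(\mu x)$ and Fubini.

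Your final step is a non sequitur. From your identity and $Y<A$ you would get $\frac{3}{q+1}A>2A$, which points the opposite way. The comparison of the Yukawa and Coulomb terms plays no role. The scaling route is not a sharper alternative either: it must give the same identity as the symmetrization. The Riesz sanity check you mention ($Y=0$ must give $q=5$) would have caught the $\frac52$.

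\textbf{Part (i), necessity at $q=3$: the ``Hence'' fails.} With $q=3$ and $u\ge c|x|^{-1}$, the logarithmic divergence of $\int_2^R r^{-1}dr$ only shows $\int u^3=\infty$. What you know to be finite is $\int u^3(1+|y|)^{-1}dy$. That integral stays finite for $u\sim|x|^{-1}$, and even for $u\sim|x|^{-1}(\log|x|)^m$ with any $m$, since $\int_2^\infty r^{-2}(\log r)^{3m}dr<\infty$. So iterating power/log lower bounds never reaches a contradiction.

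For $2<q<3$ your iteration $\beta_{k+1}=q\beta_k-2$ does close, because $\beta_0=1<2/(q-1)$ forces $\beta_k\to-\infty$. It does not close after a single step once $q>1+\sqrt3$, contrary to your parenthetical.

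The critical case needs the superlinear feedback of the mass itself, which is the paper's argument:
\begin{itemize}
\item Since $\mathcal{K}_a$ is radially decreasing, $u(x)\ge \frac CR\int_{B_R}u^3$ for $x\in B_R$ and $R$ large.
\item Cubing and integrating over $B_R$ gives $\int_{B_R}u^3\ge C\bigl(\int_{B_R}u^3\bigr)^3$ with $C$ independent of $R$, so $u\in L^3(\mathbb{R}^3)$.
\item The same estimate on annuli, $\int_{B_R\setminus B_{R/2}}u^3\ge C\bigl(\int_{B_{R/3}}u^3\bigr)^3$, forces $u\equiv 0$ as $R\to\infty$.
\end{itemize}
Equivalently, $F(R)=\int_{B_R}u^3$ satisfies $F'(R)\ge cR^{-1}F(R)^3$, so it must blow up at a finite value of $\log R$.

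Your sufficiency construction with $2/(q-1)<\beta<1$ is correct. It is a mild variant of the paper's two endpoint choices $\beta=1$ and $\beta=2/(q-1)$.
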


\begin{rem}\label{rem1.2}
Theorem \ref{th*1} (ii) shows that $q>5$ is a necessary condition of the
existence of positive distribution solutions of (\ref{A2}) in $\mathcal{D}$.
Therefore, the best constant of \eqref{zhao} cannot be achievable
when $q=5$. Similarly, Theorem \ref{th1} (ii) shows that the best
constants of \eqref{xin} cannot be achievable (now, $q=5$).
\end{rem}

In the proof of Theorem \ref{th1} (i), we use the ideas of iteration (cf.
\cite{cdm} and \cite{LL}) and the Pohozaev identity of integral form (cf.
\cite{cdm} and \cite{Xu07}).

Next, we investigate the regularity of positive solutions of
(\ref{A2}) and \eqref{A1}. We have the following theorems
(cf. Theorems \ref{th5} and \ref{th11}).

\begin{theorem}\label{th5}
    Assume $u \in \mathcal{D}$ is a positive distribution solution of
    (\ref{A2}), then $u$ is differentiable in $\mathbb{R}^3$
    and belongs to $L^{q+1}(\mathbb{R}^3)$.
\end{theorem}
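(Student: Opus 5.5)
We will prove Theorem \ref{th5} by passing to the integral equation through Theorem \ref{theq} (i): a positive distribution solution $u\in\mathcal{D}$ lies in $L^{3(q-1)/2}(\mathbb{R}^3)$ and satisfies \eqref{A1} a.e. By Theorem \ref{th*1} (ii) we also have $q>5$. From $\mathcal{D}\hookrightarrow L^6(\mathbb{R}^3)\cap L^\infty(\mathbb{R}^3)$ (Lemma 3.1 in \cite{PS}), $u\in L^6\cap L^\infty$. Since $q>5$ gives $q+1>6$, interpolation between $L^6$ and $L^\infty$ yields $u\in L^{q+1}(\mathbb{R}^3)$ at once. So the integrability claim is essentially free, and the real work is differentiability.

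For differentiability we work with \eqref{A1}, redefining $u$ on a null set so that \eqref{A1} holds everywhere. The kernel satisfies
$$\mathcal{K}_a(x)\le \min\{1/a,\,1/|x|\},\qquad |\nabla\mathcal{K}_a(x)|\le C\min\{1,\,|x|^{-2}\}.$$
Indeed, near $0$ we have $\mathcal{K}_a(x)=\frac1a-\frac{|x|}{2a^2}+\dots$, so $\mathcal{K}_a$ is Lipschitz near the origin, with only a cone-type singularity in its gradient. We then split $u^q=u^q\chi_{B_R(x)}+u^q\chi_{B_R(x)^c}$. For $f=u^q$ we know $f\in L^\infty\cap L^{6/q}$, hence $f\in L^{p}$ for every $p\ge 6/q$, and $6/q<6/5$.

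Differentiating under the integral sign requires $\int |\nabla\mathcal{K}_a(x-y)|\,u^q(y)\,dy<\infty$, locally uniformly in $x$. The near part is bounded by $C\|u\|_\infty^q |B_R|$. The far part is controlled by $\||x|^{-2}\chi_{|x|>R}\|_{L^{p'}}\|u^q\|_{L^p}$ with $p'>3/2$, i.e. $p<3$, which is compatible with $p\ge 6/q$ since $6/q<6/5<3$. Dominated convergence, using the mean value theorem on difference quotients together with these kernel bounds, then gives continuity of $\nabla u$, so $u\in C^1$.

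The main obstacle is making the differentiation rigorous at the origin of the kernel. $\mathcal{K}_a$ is not $C^1$ at $0$: $\nabla\mathcal{K}_a$ is bounded but discontinuous there. We therefore bound the difference quotients $|\mathcal{K}_a(x+he-y)-\mathcal{K}_a(x-y)|/|h|$ by the global Lipschitz bound $C\min\{1,|x-y|^{-2}\}$ (up to a shift by $|h|\le1$), which is integrable against $u^q$ by the estimates above. Pointwise convergence holds for a.e. $y$, hence the derivative exists. If one prefers to avoid the mean-value step, an alternative is to write $\mathcal{K}_a=\frac{1}{|x|}-\frac{e^{-|x|/a}}{|x|}$, i.e. the Newton potential minus the Yukawa potential. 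Then $u=I_2*u^q-Y_a*u^q$ with $u^q\in L^\infty\cap L^{6/q}$, and standard potential estimates give $C^{1,\alpha}$ regularity for each piece.
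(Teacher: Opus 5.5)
Your argument for the integrability half is circular. You get $u\in L^{q+1}(\mathbb{R}^3)$ ``for free'' from $u\in L^6\cap L^\infty$ and $q>5$. But $q>5$ (Theorem \ref{th*1} (ii), i.e.\ Theorem \ref{th2.*3}) is obtained in the paper from the Pohozaev identity for $E(u)$. That computation needs $\int_{\mathbb{R}^3}u^{q+1}dx<\infty$, since this is what makes the third term of \eqref{wei} finite and what gives $G_3=G_1+G_2$ with $G_3$ finite. The introduction says so explicitly: Theorem \ref{th*1} (ii) is a corollary of the Pohozaev identity and Theorem \ref{th5}. Without $q>5$ your interpolation gives nothing for $q<5$, because then $L^6\cap L^\infty\not\subset L^{q+1}$. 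The same dependency runs through Theorem \ref{theq} (i), which you invoke for the differentiability half. In the paper it is assembled from Theorems \ref{th8.2} and \ref{th2.*1}, which use $q>5$ and, via Theorem \ref{th2.*2}, $u\in L^{q+1}$.

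The missing ingredient is a direct proof of $u\in L^{q+1}$ valid for every $q>0$, which is Theorem \ref{th8.1}. Test \eqref{wu} with $\phi=u\zeta_R^2$; this is admissible by density of $C_0^\infty$ in $\mathcal{D}$ together with $u\in L^\infty$. Expand $\nabla(u\zeta_R^2)$ and $\Delta(u\zeta_R^2)$ and use Cauchy--Schwarz, $|\nabla\zeta_R|\le C/R$, $|\Delta\zeta_R|\le C/R^2$, and H\"older against $\|u\|_{L^6}$ on the annulus where the derivatives of $\zeta_R$ live. This gives $\int u^{q+1}\zeta_R^2\,dx\le C$ with $C$ depending only on $a$ and $\|u\|_{\mathcal{D}}$, and you then let $R\to\infty$.

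With this in hand, the chain $L^{q+1}\Rightarrow q>5\Rightarrow$ Theorem \ref{theq} (i) is legitimate, and your differentiability argument goes through. You bound difference quotients by the global Lipschitz bound $C\min\{1,|x-y|^{-2}\}$ of $\mathcal{K}_a$ and apply dominated convergence, using $u^q\in L^\infty\cap L^{6/q}$ directly from $\mathcal{D}$. This is a correct and somewhat cleaner substitute for the paper's Theorem \ref{th6.1}, which instead cuts off the kernel near the diagonal with $\zeta_\varepsilon$ and passes to uniform limits of $U_\varepsilon$ and $\nabla U_\varepsilon$.
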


Theorem \ref{th*1} (ii) is the corollary of the Pohozaev identity
and Theorem \ref{th5}. Clearly, the distribution solution
$u \in \mathcal{D}$ is a finite energy solution. Now, improper
integrals in the first and the second terms of $E(u)$ (cf. \eqref{wei}) are convergent.
Theorem \ref{th5} shows that the improper integral in the third term
is also convergent. Therefore, we can avoid the tedious calculations
involved in deriving the Pohozaev identity from \eqref{A2} and directly
derive this identity from \eqref{wei} by the ideas in \cite{IMN}.

Noticing Theorem \ref{theq} (ii), we are also concerned about the
regularity of $L^{3(q-1)/2}(\mathbb{R}^3)$-solutions of \eqref{A1}.
Clearly, Theorems \ref{th*1} and \ref{th1} ensure $3(q-1)/2>1$.

First, we consider the optimal integrability interval of these
positive solutions by the regularity lifting lemma. Here the ideas in
\cite{CLOO} are used. Next, applying the method of moving planes
in integral form introduced in \cite{CLO}, we prove the radial
symmetry of these solutions. Based on these results,
we estimate the decay rate of these solutions when $|x| \to \infty$.

\begin{theorem}\label{th11}
    Assume $u \in L^{3(q-1)/2}(\mathbb{R}^3)$ is a positive
    solution of (\ref{A1}). Then

    (i) $u \in L^s(\mathbb{R}^3)$ for any $s \in (3,\infty]$.
    Moreover, the left end point $3$ is optimal.

    (ii) $u(x)$ is radially symmetric and decreasing
    about some point $x_0 \in \mathbb{R}^3$.

    (iii) There exists constant $C>1$ such that for
    sufficiently large $|x|$,
    \begin{equation}
        C^{-1} \mathcal{K}_a(x) \leq u(x) \leq C\mathcal{K}_a(x).
    \end{equation}
\end{theorem}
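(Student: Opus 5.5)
Throughout I use the two-sided bound $\mathcal{K}_a(x)\simeq \min\{1/a,1/|x|\}$: near the origin $\mathcal{K}_a(x)\to 1/a$, and at infinity it behaves like $|x|^{-1}$. So $\mathcal{K}_a\le |x|^{-1}$, $\mathcal{K}_a$ is bounded, and $\mathcal{K}_a\in L^s(\mathbb{R}^3)$ exactly for $s>3$. By Theorems \ref{th*1} and \ref{th1}, $q>3$, so $r_0:=3(q-1)/2>3$.

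\textbf{Part (i).} I would use the regularity lifting lemma in the form of \cite{CLOO}.
\begin{itemize}
\item Split $\mathcal{K}_a\le |x|^{-1}$. For $A>0$ set $u_A=u\chi_{\{u>A\}\cup\{|x|>A\}}$ and $u_B=u-u_A$, and define
$$T g=\int_{\mathbb{R}^3}\mathcal{K}_a(x-y)u_A^{q-1}(y)g(y)\,dy .$$
\item By the Hardy--Littlewood--Sobolev and H\"older inequalities, $\|Tg\|_{L^s}\le C\|u_A\|_{L^{r_0}}^{q-1}\|g\|_{L^s}$ for every $s>3$. This uses $(q-1)/r_0=2/3$, so the exponent relation $1/s+2/3-2/3=1/s$ closes with $n-\alpha=1$.
\item Taking $A$ large makes $T$ a contraction on $L^s$.
\item The remainder $F=\int\mathcal{K}_a(x-y)u_B^q\,dy$ has $u_B^q$ bounded and compactly supported, so $F\le C\mathcal{K}_a$-like and $F\in L^s$ for all $s>3$.
\item Lifting then gives $u\in L^s$ for all $s\in(3,r_0]$.
\end{itemize}

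For $s>r_0$ and $s=\infty$, I would apply the same contraction with the bounded kernel directly. Alternatively, use H\"older: $u(x)\le \|\mathcal{K}_a\|_{L^{s'}}\|u^q\|_{L^{s}}$ with $u^q\in L^{t}$ for a suitable $t$ such that $t'>3$, which is available since $u\in L^s$ for all large finite $s$ and $q>3$.

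Optimality of the endpoint $3$ follows from (iii) (or directly from a lower bound). Since $u\ge \int_{B_1}\mathcal{K}_a(x-y)u^q\,dy\ge c|x|^{-1}$ for $|x|$ large, we get $u\notin L^3$.

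\textbf{Part (ii).} I would use moving planes in integral form \cite{CLO}.
\begin{itemize}
\item For $x^\lambda$ the reflection across $x_1=\lambda$, set $\Sigma_\lambda=\{x_1<\lambda\}$ and $w_\lambda=u_\lambda-u$.
\item Since $\mathcal{K}_a$ is radially decreasing, for $x\in\Sigma_\lambda$
$$u(x)-u_\lambda(x)=\int_{\Sigma_\lambda}\bigl(\mathcal{K}_a(x-y)-\mathcal{K}_a(x^\lambda-y)\bigr)\bigl(u^q(y)-u_\lambda^q(y)\bigr)dy .$$
\item On $\Sigma_\lambda^-=\{u>u_\lambda\}$ the mean value theorem gives $u-u_\lambda\le \int_{\Sigma^-_\lambda}|x-y|^{-1}q u^{q-1}(u-u_\lambda)\,dy$.
\item HLS then yields
$$\|u-u_\lambda\|_{L^s(\Sigma^-_\lambda)}\le C\|u\|_{L^{r_0}(\Sigma^-_\lambda)}^{q-1}\|u-u_\lambda\|_{L^s(\Sigma^-_\lambda)} .$$
\item For $\lambda\to-\infty$ the factor is small, so $\Sigma^-_\lambda$ is null.
\item Move the plane to the limiting position $\lambda_0$ and show symmetry there by the usual argument: integrability makes $\|u\|_{L^{r_0}(\Sigma^-_\lambda)}$ small for $\lambda$ slightly beyond $\lambda_0$, using the strict kernel inequality and continuity.
\item Doing this in every direction gives radial symmetry and monotonicity about some $x_0$.
\end{itemize}

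\textbf{Part (iii).} The lower bound comes from $u\ge \min_{B_1} u^q\int_{B_1}\mathcal{K}_a(x-y)\,dy\ge c|x|^{-1}$.

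For the upper bound, split $\int\mathcal{K}_a(x-y)u^q(y)\,dy$ over three regions:
\begin{itemize}
\item On $|y|<|x|/2$, the contribution is $\le C|x|^{-1}\|u\|_q^q$, which is finite since $q>3$ and (i) apply.
\item On $|y|>2|x|$, the contribution is bounded by the same.
\item On $|x|/2\le|y|\le 2|x|$, use radial decay from (ii) and $L^s$ integrability to get $u(y)\le C|y|^{-3/s}$. Refining with the lower-order bootstrap gives $u\le C|y|^{-1}$ there, so the piece is $\le C|x|^{-q}\int_{|x-y|<3|x|}|x-y|^{-1}\,dy\le C|x|^{2-q}\le C|x|^{-1}$ since $q>3$.
\end{itemize}

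Main obstacle: the middle annulus in (iii), where the a priori pointwise decay must be bootstrapped. I would first obtain $u(x)\le C|x|^{-3/s}$ from monotonicity and $L^s$. Iterating the estimate $u\le C|x|^{-1}+C|x|^{2-q\beta}$ from decay $|x|^{-\beta}$ improves $\beta$ until it reaches $1$.
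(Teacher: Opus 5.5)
Your proposal is correct. The lifting step and endpoint optimality in (i), the moving-plane argument in (ii), and the lower bound in (iii) are essentially the paper's. The route differs in two places.

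First, for $u\in L^\infty$ you use H\"older directly: $u(x)\le\|\mathcal{K}_a\|_{L^{t'}}\|u\|_{L^{qt}}^q$ with $t\in(1,3/2)$. This is legitimate precisely because $\mathcal{K}_a$ is bounded at the origin, so it lies in $L^{t'}$ for every $t'>3$. The paper instead dominates $u$ by $I_2(u^q)$, rewrites it as a Wolff-type integral split at radius $1$, compares with $I_2(u^q)(z)$ for $z\in B_\delta(x)$, averages over the ball and applies HLS. The paper's argument would survive for the genuinely singular kernel $|x|^{-1}$, which lies in no $L^p$, and it is reused in Theorem \ref{th2.1} and \S7. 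Yours is shorter, and since $L^{t'}*L^{t}\subset C_0(\mathbb{R}^3)$, it also gives at no cost the continuity you invoke in (ii) and the decay $u\to 0$ of Theorem \ref{th2.1}.

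Second, for the upper bound in (iii) you split into $|y|<|x|/2$ and $|y|>2|x|$, each $\le C|x|^{-1}\|u\|_{L^q}^q$, plus the annulus, where monotonicity and $L^s$-integrability give $u(y)\le C|y|^{-3/s}$. The paper splits into $B_R$, the ball $B_{(1-2^{-m})|x|}(x)$ with an auxiliary scale $m(x)\to\infty$ tuned to balance that piece, and the remainder. Both yield only a non-explicit constant, and your decomposition is the more transparent of the two.

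Two small points need fixing. The first concerns your bootstrap. The iteration $\beta\mapsto\min\{1,q\beta-2\}$ improves $\beta$ only if $\beta>2/(q-1)$. The hypothesis $u\in L^{3(q-1)/2}$ gives exactly the fixed point $\beta=2/(q-1)$, so you must start from (i) with some $s<3(q-1)/2$. If you take $s\in(3,q)$, as the paper does, then $3q/s-2>1$ and a single step already gives $|x|^{2-3q/s}\le |x|^{-1}$. So your ``main obstacle'' disappears.

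The second concerns $q>3$. Appealing to Theorems \ref{th*1} and \ref{th1} at that stage is not quite legitimate, and the paper has the same circularity. Theorem \ref{th1}(i) concerns $L^\infty_{\rm loc}$ super solutions, and boundedness is what you are proving. Theorem \ref{th*1} concerns (\ref{A2}), whose link to (\ref{A1}) in this setting is proved later using the present theorem. The fix is immediate from your own estimate $u(x)\ge c|x|^{-1}$ for large $|x|$, which uses only $\int_{B_1}u^q>0$. Combined with $u\in L^{3(q-1)/2}$, it forces $3(q-1)/2>3$, i.e.\ $q>3$, before any lifting.
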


\begin{rem}
Theorem \ref{tha2} shows that \eqref{A1} has two super solutions
decaying fast and slowly respectively. Here, we can find $C>1$
such that for large $|x|$, there holds $(C|x|)^{-1} \leq
\mathcal{K}_a(x) \leq C|x|^{-1}$. Therefore, the positive solution in
Theorem \ref{th11} (iii) decays fast.
\end{rem}

\begin{rem}
According to Theorem \ref{theq}, we know that if $u \in
\mathcal{D}$ is a positive distribution solution of \eqref{A2}, then
(i)-(iii) in Theorem \ref{th11} still hold true. In addition, if
$u \in L^{3(q-1)/2}(\mathbb{R}^3)$ is a positive solution of \eqref{A1},
then Theorem \ref{th5} shows that $u$ is still differentiable.
\end{rem}

\vskip 5mm

Finally, we consider the Allen-Cahn-type equation \eqref{sf3} with $\sigma=1$
\begin{equation}\label{jing}
(-\Delta+a^2\Delta^2)u=u(1-u^2).
\end{equation}
More generally, we study the following
integral equation with the Coulomb potential
\begin{equation}\label{A3}
    u(x)=l+ C_\ast\int_{\mathbb{R}^3}\mathcal{K}_a(x-y)[u^{q-1}(y)(1-u^q(y))]dy,
\end{equation}
where $u:\mathbb{R}^3 \to \mathbb{R}$ with $a>0$, $l \in \mathbb{R}$,
$C_\ast>0$ and $q>1$.
If $C_\ast=(4\pi)^{-1}$ and $u^{q-1}(1-u^q)$ is rapidly decreasing,
from \eqref{A3} and \eqref{peng}
we can deduce a PDE of the Allen-Cahn-type (in distribution sense)
\begin{equation}\label{fan}
(-\Delta+a^2\Delta^2)u(x)=\delta_x \ast [u^{q-1}(1-u^q)]
=u^{q-1}(x)(1-u^q(x)).
\end{equation}
When $q=2$, \eqref{fan} is reduced to \eqref{jing}.

Here, we still use the Pohozaev identity to study the Liouville theorem.
However, although \eqref{fan} also has the variational structure
$$
E_{AC}(u)=\frac{1}{2}\int_{\mathbb{R}^3}|\nabla u|^2dx
+\frac{a^2}{2}\int_{\mathbb{R}^3}|\Delta u|^2dx
+\frac{1}{2q}\int_{\mathbb{R}^3}(1-u^q)^2 dx,
$$
not all the improper integrals in this
functional converge even if the solution is a finite energy
solution. In addition, if multiplying \eqref{fan} by $(x \cdot \nabla u)$,
we can observe that some integrals of the third order
partial derivatives will appear in the Pohozaev identity.
To avoid those tedious calculations and the stronger structural conditions
which ensure those calculations make sense, we directly apply the
Pohozaev identity in integral form which is deduced by \eqref{A3}.
Therefore, we here only study bounded solutions of \eqref{A3}
instead of distribution solutions of \eqref{fan}.

\vskip 5mm

First, we provide the value of $l$.

\begin{theorem}\label{th77}
    Assume that a uniformly continuous function $u$ solves (\ref{A3}).
    If
    \begin{equation}\label{80}
        0\leq u \leq 1 \quad on \;\mathbb{R}^3;
    \end{equation}
    and
    \begin{equation}\label{81}
        \int_{\mathbb{R}^3}u^{q-1}(1-u^q)dx <\infty.
    \end{equation}
    Then, one of the following two results holds

    (i) $u \in L^{q-1}(\mathbb{R}^3)$ and $l=0$;

    (ii) $1-u^q \in L^1(\mathbb{R}^3)$ and $l=1$.
\end{theorem}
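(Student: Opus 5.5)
Write $f(y):=u^{q-1}(y)(1-u^q(y))$. By \eqref{80} we have $f\ge 0$, and by \eqref{81} $f\in L^1(\mathbb{R}^3)$. Put
$$w(x):=C_\ast\int_{\mathbb{R}^3}\mathcal{K}_a(x-y)f(y)dy\ge 0,$$
so that $u=l+w$. The plan has three steps: show that $w(x)\to0$ along most directions at infinity, deduce that $u$ has limit $l$ at infinity with $l\in[0,1]$, and finally identify $l$ using \eqref{81}.

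\textbf{Step 1: decay of $w$.} Split $\mathcal{K}_a=\mathcal{K}_a\chi_{B_1}+\mathcal{K}_a\chi_{B_1^c}$. The first piece lies in $L^1$ and is bounded by $|x|^{-1}$. The second piece is bounded by $1$, so $\mathcal{K}_a\chi_{B_1^c}\le \min(1,|x|^{-1})$ is bounded and tends to $0$ at infinity. Since $f\in L^1\cap L^\infty$ (recall $0\le f\le1$), the convolution $w$ is bounded and continuous. We also have $\|w\|_{L^{3,\infty}}\le C\|f\|_{L^1}$, because $\mathcal{K}_a\le|x|^{-1}\in L^{3,\infty}$. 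The term $(\mathcal{K}_a\chi_{B_1^c})*f$ tends to $0$ as $|x|\to\infty$ by dominated convergence. The local term $(\mathcal{K}_a\chi_{B_1})*f(x)$ is at most $C\|f\|_{L^\infty(B_1(x))}^{1/3}\|f\|_{L^1(B_1(x))}^{2/3}$ after a Hölder split, and $\|f\|_{L^1(B_1(x))}\to0$ because $f\in L^1$. Hence $w(x)\to0$ as $|x|\to\infty$.

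\textbf{Step 2: $u\to l$ at infinity.} From Step 1, $u(x)\to l$ as $|x|\to\infty$, and \eqref{80} forces $0\le l\le 1$. The uniform continuity of $u$ is kept as a backup, in case $f$ has to be controlled pointwise when making the decay rigorous. Since $f$ is then a continuous function of $u$ and $u\to l$, we get $f(x)\to l^{q-1}(1-l^q)$. Integrability of $f$ on $\mathbb{R}^3$ requires this limit to be $0$. Because $q>1$, the only possibilities are $l=0$ or $l=1$.

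\textbf{Step 3: the integrability statements.} If $l=0$, then $u\to0$, so $1-u^q\ge 1/2$ outside a large ball. Hence $u^{q-1}\le 2f$ there. Inside the ball $u\le1$ is bounded, so $u\in L^{q-1}(\mathbb{R}^3)$, which is (i). If $l=1$, then $u\to1$, so $u^{q-1}\ge1/2$ outside a large ball. Hence $0\le 1-u^q\le 2f$ there, and $1-u^q\in L^1(\mathbb{R}^3)$, which is (ii).

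The main obstacle is Step 1. It needs $w(x)\to0$ pointwise at infinity, not only in a weak-$L^3$ sense, and this uses the boundedness of $f$ together with the $L^1$ smallness of $f$ on unit balls far away. A weak-$L^3$ estimate alone would give $u-l\to0$ only off a set of finite measure. In that case I would use the uniform continuity of $u$ to upgrade the conclusion to full convergence: if $|u-l|\ge\varepsilon$ at points $x_k\to\infty$, then $|u-l|\ge\varepsilon/2$ on balls of fixed radius around the $x_k$. Infinitely many such disjoint balls would contradict $w\in L^{3,\infty}$, since that forces the level set $\{|w|\ge\varepsilon/2\}$ to have finite measure.
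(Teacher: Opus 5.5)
Your proof is correct, and it takes a different and more economical route than the paper.

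\textbf{The paper's route.} The paper first proves the auxiliary dichotomy Theorem \ref{th79}. That result uses only \eqref{80}, \eqref{81} and the uniform continuity of $u$, not the equation. The set $\{1/4\le u\le 3/4\}$ must be bounded, since otherwise uniform continuity yields infinitely many disjoint balls of fixed radius on which $u^{q-1}(1-u^q)$ is bounded below. Connectedness of $\mathbb{R}^3\setminus B_{R_0}$ then forces either $u\le 1/4$ or $u\ge 3/4$ there. This gives $u\in L^{q-1}$ with $u\to 0$, or $1-u^q\in L^1$ with $u\to 1$. Only afterwards does the paper show that the potential term tends to $0$, using the Wolff-potential averaging of \eqref{sun} together with Hardy--Littlewood--Sobolev. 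It identifies $l$ by matching $\lim u=l$ with the limits from the dichotomy.

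\textbf{Your route.} You reverse the order. You get the limit $l$ directly from the equation. You then obtain $l\in\{0,1\}$ from $f\in L^1$ and the continuity of $t\mapsto t^{q-1}(1-t^q)$, and the integrability statements follow at once. This shows that uniform continuity and the connectedness argument are not needed for this theorem. The decay of $w$ is even simpler than you make it. Since $0\le \mathcal{K}_a(x)\le \min\{1/a,|x|^{-1}\}$, dominated convergence with majorant $f/a$ gives $w(x)\to 0$ immediately. So neither your splitting of the local term nor the weak-$L^3$ backup in your last paragraph is necessary.

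\textbf{What each buys.} The paper's route yields Theorem \ref{th79} as a statement independent of \eqref{A3}: any uniformly continuous $u$ satisfying \eqref{80} and \eqref{81} has limit $0$ or $1$ at infinity. Your argument instead relies on the equation to guarantee that $u$ has a limit at all.
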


    Next, we state the Liouville theorem.

    \begin{theorem}\label{th78}
    Assume that $u$ is a uniformly continuous and
    differentiable solution of (\ref{A3}).
If \eqref{80} and \eqref{81} hold, then

        (i) when (i) in Theorem \ref{th77} happens and $1<q \leq 6$,
        we have $u \equiv 0$;

        (ii) when (ii) in Theorem \ref{th77} happens, we have $u \equiv 1$.
    \end{theorem}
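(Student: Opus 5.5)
The plan is to derive a Pohozaev identity in integral form directly from \eqref{A3}, in the spirit of \cite{cdm,Xu07}, and to combine it with the integrated equation. Write $f(u)=u^{q-1}(1-u^q)\ge 0$ by \eqref{80}, and $F(u)=\int_0^u f=\frac{u^q}{q}-\frac{u^{2q}}{2q}$. By Theorem \ref{th77}, $l\in\{0,1\}$ with the corresponding integrability. For the kernel we record $x\cdot\nabla\mathcal{K}_a(x)=-\mathcal{K}_a(x)+\frac{1}{a}e^{-|x|/a}$. So $\mathcal{K}_a$ is homogeneous of degree $-1$ up to the exponentially decaying Yukawa-type correction $a^{-1}e^{-|x|/a}$, and this correction is what will produce a sign.

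The first step is the identity. Multiply $u-l=C_\ast\mathcal{K}_a\ast f(u)$ by $(x\cdot\nabla u)f(u)=x\cdot\nabla F(u)$ and integrate over $B_R$. On the left side we obtain $\int (u-l)\,x\cdot\nabla F(u)$. For the right side we differentiate the convolution, $x\cdot\nabla(\mathcal{K}_a\ast f)(x)$, and use the symmetrization trick: write $x=(x-y)+y$, which gives
\[
\int\!\!\int f(u(x))f(u(y))\,(x-y)\cdot\nabla\mathcal{K}_a(x-y)\,dxdy=2\int x\cdot\nabla (\mathcal{K}_a\ast f)\, f\,dx ,
\]
and then insert the kernel identity.

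In case (i) ($l=0$) we integrate by parts on the left: $\int u\,x\cdot\nabla F=-3\int uF-\int F\,x\cdot\nabla u$. The term $\int F(u)\,x\cdot\nabla u$ equals $-3\int G(u)$ with $G'=F$. The boundary terms vanish along a sequence $R_j\to\infty$, because $u\in L^{q-1}\cap L^\infty$, $u\to0$ (by uniform continuity), and $F(u)\lesssim u^q$. On the right we get $-\frac{C_\ast}{2}\big[\int u f(u)-\frac1a\int\!\!\int e^{-|x-y|/a}f(u(x))f(u(y))\big]$. Collecting terms gives an identity of the form
\[
\int\big(3G(u)-3uF(u)+\tfrac12 uf(u)\big)dx=\frac{C_\ast}{2a}\int\!\!\int e^{-|x-y|/a}f(u(x))f(u(y))\,dxdy\ \ge 0 .
\]
We compute with $u^{q}=t$: $G=\frac{u^{q+1}}{q(q+1)}-\frac{u^{2q+1}}{2q(2q+1)}$, and the pointwise integrand is
\[
u^{q+1}\Big(\tfrac{3}{q(q+1)}-\tfrac3q+\tfrac12\Big)+u^{2q+1}\Big(-\tfrac{3}{2q(2q+1)}+\tfrac{3}{2q}-\tfrac12\Big).
\]
For $1<q\le6$ both coefficients are $\le0$, and at least one is strictly negative. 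For example, at $q=6$ the first coefficient is $\frac{3}{42}-\frac12+\frac12>0$; since that fails, the precise arithmetic needs rechecking, and the constant $3$ in front of $uF$ may change once the factor $u^{q-1}$ in $u-l$ versus $f$ is tracked carefully. The aim is to arrange that the left side is $\le0$ exactly when $q\le6$ while the right side is $\ge0$, forcing $f(u)\equiv0$. Then $u=C_\ast\mathcal{K}_a\ast0=0$. I expect the exact bookkeeping of these coefficients, and locating the threshold $6$, to be the main obstacle.

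In case (ii) ($l=1$) we set $w=1-u\ge0$ and repeat the argument with $u-1=-C_\ast\mathcal{K}_a\ast f(u)$. Now the sign is reversed, since $u-1\le0$ while the convolution is $\ge0$. A simpler route is also available here: by \eqref{80}, $f\ge0$, so $u-1=C_\ast\mathcal{K}_a\ast f\ge0$, hence $u\ge1$ and therefore $u\equiv1$. This settles (ii) immediately, with no integrability issues. By the same positivity, in case (i) we get $u>0$ unless $f(u)\equiv0$, which justifies dividing by $u$ where needed. The convergence of all the integrals used above follows from $f(u)\in L^1\cap L^\infty$, from $\mathcal{K}_a\ast f\in L^\infty$, and from the differentiability assumption on $u$.
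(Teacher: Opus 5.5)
\textbf{Part (ii).} Your simpler route is correct and is exactly the paper's Step 1: with $l=1$, $u-1=C_\ast\mathcal{K}_a\ast f(u)\ge0$, and together with $u\le1$ this gives $u\equiv1$. The sentence just before it has a sign slip, since \eqref{A3} gives $u-1=+C_\ast\mathcal{K}_a\ast f(u)$.

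\textbf{Part (i).} Here there is a genuine gap, and it is not just bookkeeping: the identity you collect is not a valid identity.

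\begin{itemize}
\item Your left side, $\int(u-l)\,x\cdot\nabla F(u)\,dx$, comes from multiplying the equation itself by $x\cdot\nabla F(u)$.
\item Your right side, $\frac{C_\ast}{2}\iint f(u(x))f(u(y))(x-y)\cdot\nabla\mathcal{K}_a(x-y)\,dx\,dy$, is the symmetrization of $C_\ast\int f(u)\,x\cdot\nabla(\mathcal{K}_a\ast f(u))\,dx=\int f(u)\,x\cdot\nabla u\,dx$. That is a different quantity.
\item If you really multiply the equation by $x\cdot\nabla F(u)$ and move the derivative onto the convolution, you get $F(u)$ paired with $x\cdot\nabla(\mathcal{K}_a\ast f(u))$. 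This does not symmetrize, because $F\neq f$.
\end{itemize}
This mismatch is why your first coefficient comes out positive at $q=6$.

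\textbf{The fix.} Use $f(u)$ as the multiplier against $x\cdot\nabla u$. The left side is then
\[
\int f(u)\,x\cdot\nabla u=\int x\cdot\nabla F(u)=-3\int F(u).
\]
The boundary terms vanish along some $R_j\to\infty$ because $0\le F(u)\le u^q/q\in L^1$. Combined with your kernel identity this gives
\[
\frac{q-6}{2q}\int_{\mathbb{R}^3}u^q\,dx+\frac{3-q}{2q}\int_{\mathbb{R}^3}u^{2q}\,dx=\int_{\mathbb{R}^3}\Big(\tfrac12uf(u)-3F(u)\Big)dx=\frac{C_\ast}{2a}\iint e^{-\frac{|x-y|}{a}}f(u(x))f(u(y))\,dx\,dy .
\]
This is \eqref{109} divided by $2$. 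The paper reaches it by differentiating $u(\mu x)$ in $\mu$, multiplying by $f(u)$ and using Fubini, which is equivalent to your symmetrization.

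The threshold $6$ then follows directly:
\begin{itemize}
\item For $3\le q\le6$, both coefficients are $\le0$.
\item For $1<q<3$, the integrand equals $-\frac{3}{2q}u^q+\frac{q-3}{2q}(u^q-u^{2q})\le-\frac{3}{2q}u^q$, using $0\le u\le1$.
\end{itemize}
So the left side is strictly negative unless $u\equiv0$, while the right side is $\ge0$.

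\textbf{Convergence claim.} Your closing claim that all convergence follows from $f(u)\in L^1\cap L^\infty$ is too quick. Splitting $x=(x-y)+y$ and exchanging variables requires absolute convergence of $\iint f(u(x))f(u(y))\,|y|\,|\nabla\mathcal{K}_a(x-y)|\,dx\,dy$. $L^1\cap L^\infty$ bounds alone do not give this, so the step needs a separate limiting argument, for example working on $B_R$ first and then letting $R\to\infty$.
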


\begin{rem} 
When $a=0$ and $q=2$, \eqref{A3} becomes the equation
$u(x)=l+[|x|^{-1} \ast u(1-|u|^2)](x)$.
The Liouville theoerm can be found in \cite{LCL}.
Other related results of the Allen-Cahn-type
integral equation can be seen in \cite{cl,cl2}.
\end{rem}

This paper is arranged as follows. Theorem \ref{theq} (i) is
proved in \S2 (cf. Remark \ref{rem2.3}). Theorem \ref{theq} (ii)
is proved in \S5. Theorem \ref{th*1} (i) is proved in \S3 (cf.
Remark \ref{rem2.1}). Theorem \ref{th*1} (ii) is proved in \S2
(cf. Theorem \ref{th2.*3}). Theorem \ref{th1} (i) is proved in \S3
(cf. Theorems \ref{tha1} and \ref{tha2}). Theorem \ref{th1} (ii)
is proved in \S6. Theorem \ref{th5} is proved in \S2 and \S4.2 (cf.
Remark \ref{lem4.1}). Theorem \ref{th11} is proved
in \S4.1, \S4.3 and \S4.4. Theorems \ref{th77} and \ref{th78}
are proved in \S7.

\section{Properties of distribution solutions}

In this section, we give several properties of distribution
solutions of \eqref{A2}. The following estimates of $\mathcal{K}_a$
is needed.

First, by (3.4) and (3.5) in \cite{PS}, we have
\begin{equation}\label{chu}
\nabla \mathcal{K}_a(x) =-\frac{x}{|x|^3} +\frac{x}{|x|^3}
\left(\frac{|x|}{a}+1\right)e^{-|x|/a},
\end{equation}
and
\begin{equation}\label{feng}
\Delta \mathcal{K}_a(x) =-\frac{e^{-|x|/a}}{a^2|x|}.
\end{equation}
Moreover, by (3.6) and (3.7) in \cite{PS}, we also have
\begin{equation}\label{wang}
\begin{cases}
|\nabla \mathcal{K}_a(x)|
\leq C|x|^{-2}(1-e^{-|x|/a}
-|x|e^{-|x|/a}/a) \leq C, \quad when \quad |x| \quad small;\\[3mm]
|\nabla \mathcal{K}_a(x)|
\leq C|x|^{-2}[1+e^{-|x|/a}
(1+|x|/a)] \leq C|x|^{-2}, \quad when \quad |x| \quad large.
\end{cases}
\end{equation}

\begin{theorem}\label{th8.1}
    Assume $u \in \mathcal{D}$ is a positive
    distribution solution of (\ref{A2}),
then $u\in L^{q+1}(\mathbb{R}^3)$.
\end{theorem}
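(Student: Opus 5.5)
The plan is to test the weak formulation \eqref{weak} against a cut-off of $u$ itself and pass to the limit. Since $u\in\mathcal{D}$ embeds continuously into $L^6(\mathbb{R}^3)$ and $L^\infty(\mathbb{R}^3)$, we know $u\in L^s(\mathbb{R}^3)$ for every $s\in[6,\infty]$. If $q\ge 5$, then $q+1\ge 6$ and there is nothing to prove. So the real content is the range $q<5$. (Theorem \ref{th*1} would exclude this range, but it is proved later using the present result, so we must not invoke it.)

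\textbf{Step 1.} First extend \eqref{weak} by density. For $\phi\in C_0^\infty(\mathbb{R}^3)$, integrate by parts to write the left side as
$$\int_{\mathbb{R}^3}\nabla u\cdot\nabla\phi\,dx+a^2\int_{\mathbb{R}^3}\Delta u\,\Delta\phi\,dx,$$
which is legitimate because $u\in\mathcal{D}$. Then take $\phi=\eta_R^2 u_\varepsilon$, where $u_\varepsilon$ is a mollification of $u$ and $\eta_R(x)=\eta(x/R)$ is a standard cut-off. Let $\varepsilon\to0$. This is justified because $u_\varepsilon\to u$ in $\mathcal{D}$, and on the compact support the right side converges since $u\in L^\infty$. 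The result is
$$\int_{\mathbb{R}^3}u^{q+1}\eta_R^2\,dx=\int_{\mathbb{R}^3}\nabla u\cdot\nabla(\eta_R^2u)\,dx+a^2\int_{\mathbb{R}^3}\Delta u\,\Delta(\eta_R^2u)\,dx.$$

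\textbf{Step 2.} Expand the right side. The main terms are bounded by $\|\nabla u\|_2^2+a^2\|\Delta u\|_2^2$. The error terms all involve derivatives of $\eta_R$:
$$\int u\,\eta_R\,\nabla\eta_R\cdot\nabla u,\qquad \int \Delta u\,\nabla(\eta_R^2)\cdot\nabla u,\qquad \int \Delta u\,u\,\Delta(\eta_R^2).$$
Estimate each by H\"older on the annulus $A_R=\{R<|x|<2R\}$, using $u\in L^6$:
\begin{itemize}
\item $\int|u||\nabla\eta_R||\nabla u|\le C R^{-1}\|u\|_{L^6(A_R)}|A_R|^{1/3}\|\nabla u\|_{L^2(A_R)}\le C\|u\|_{L^6(A_R)}\|\nabla u\|_{L^2(A_R)}\to0$.
\item $\int|\Delta u||\nabla u||\nabla\eta_R^2|\le CR^{-1}\|\Delta u\|_2\|\nabla u\|_2\to0$.
\item $\int|\Delta u||u||\Delta\eta_R^2|\le CR^{-2}\|\Delta u\|_{L^2(A_R)}\|u\|_{L^6(A_R)}R^{1}\to0$.
\end{itemize}
Hence $\int u^{q+1}\eta_R^2$ is uniformly bounded in $R$. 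Since $u>0$, monotone convergence gives $u\in L^{q+1}(\mathbb{R}^3)$, together with the energy identity
$$\int u^{q+1}=\|\nabla u\|_2^2+a^2\|\Delta u\|_2^2,$$
which will be useful for the Pohozaev argument later.

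\textbf{Main obstacle.} The hardest part is Step 1: justifying $\phi=\eta_R^2u$ as a test function, and in particular showing $\int u^q\eta_R^2u_\varepsilon\to\int u^{q+1}\eta_R^2$. This needs $u^q\in L^1_{\rm loc}$, which holds since $u$ is bounded (and positive, so $u^q$ makes sense even for $q<1$). A further check is that $\eta_R^2u_\varepsilon\to\eta_R^2u$ in the $H^2$-type seminorms on compact sets. This follows from $\nabla u,\Delta u\in L^2$ and $u\in L^6$, using local elliptic regularity: on a ball, $D^2u$ is controlled by $\Delta u$ and $u$.
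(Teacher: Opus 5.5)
Your proposal is correct and follows essentially the same route as the paper: test the weak formulation with $u\zeta_R^2$, expand, control the cut-off error terms by H\"older on the annulus using $u\in L^6(\mathbb{R}^3)$ and $\nabla u,\Delta u\in L^2(\mathbb{R}^3)$, and let $R\to\infty$ by monotone convergence. Two small additions over the paper are worth noting. Your mollification step justifies the test function more carefully than the paper's appeal to density; the $D^2u$ remark is unnecessary, since only $\nabla\phi$ and $\Delta\phi$ enter. Showing that the error terms actually vanish also gives the energy identity $\int u^{q+1}=\|\nabla u\|_2^2+a^2\|\Delta u\|_2^2$, which the paper obtains later by taking $\phi=u$.
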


\begin{proof}
Here the standard elliptic estimation is applied.
Integrating by parts of the left hand side of \eqref{weak}, we get
\begin{equation}\label{wu}
\int_{\mathbb{R}^3}(\nabla u \nabla \phi+a^2\Delta u\Delta \phi)dx
=\int_{\mathbb{R}^3}u^q \phi dx, \quad \forall \; \phi \in C_0^\infty(\mathbb{R}^3).
\end{equation}

Take smooth function $ \zeta (x) $ satisfying
	\begin{equation*}
		\begin{cases}
			\zeta (x)=1,&\text{ $ for \left | x \right | \le 1; $ } \\
			\zeta (x) \in [0,1],&\text{ $ for \left | x \right | \in [1,2]; $ } \\
			\zeta (x)=0,&\text{ $ for \left | x \right | \ge 2, $ }
		\end{cases}
	\end{equation*}
	and write the cut-off function
	$\zeta _R(x)=\zeta({x}/{R}).$
	
	For convenience, hereafter we denote $B_{R}(0)$ by $B_{R}$ for $R>0$.
Since $C_0^\infty(\mathbb{R}^3)$ is dense in $\mathcal{D}$
(cf. Lemma 3.2 in \cite{PS}), we can take
     $\phi=u\zeta^2_R$ in (\ref{wu}) to get
	\begin{equation}\label{53}
		\int_{B_{2R}}\nabla u \nabla (u\zeta^2_R)dx
+a^2\int_{B_{2R}} \Delta u\cdot \Delta(u\zeta^2_R)dx
		=\int_{B_{2R}}u^{q+1}\zeta^2_Rdx.
	\end{equation}
	Integrating by parts and using the Cauchy inequality, we obtain
	\begin{equation}\label{yo}
		\begin{aligned}
			\int_{B_{2R}}u^{q+1}\zeta^2_Rdx
			\leq & C\int_{B_{2R}}{\left |\nabla u \right |}^2\zeta^2_Rdx
            +C\int_{B_{2R}}|\Delta u|^2\zeta_R^2dx\\
			&+C\int_{B_{2R}}|\nabla u|^2|\nabla \zeta_R|^2dx
            +C\int_{B_{2R}}u^2{\left | \nabla \zeta_R \right |}^2dx\\
			&+C\int_{B_{2R}}|\Delta u|^2|\nabla\zeta_R|^2dx
            +C\int_{B_{2R}}u^2{\left | \Delta \zeta_R \right |}^2dx.
		\end{aligned}
	\end{equation}
	In view of $|\nabla \zeta_R| \leq
CR^{-1}$ and $|\Delta \zeta_R| \leq CR^{-2}$, there hold
	\begin{equation}\label{60}
		\int_{B_{2R}}|\nabla u|^2|\nabla \zeta_R|^2dx
        \leq \frac{C}{R^2}\int_{B_{2R}}|\nabla u|^2dx,
	\end{equation}
	
	\begin{equation}\label{61}
					\int_{B_{2R}}u^2{\left | \nabla \zeta_R \right |}^2dx
			\leq C\left(\int_{B_{2R}} u^6dx\right)^{\frac{1}{3}},
	\end{equation}
		and
	\begin{equation}\label{63}
					\int_{B_{2R}}u^2{\left | \Delta \zeta_R \right |}^2dx
			\leq \frac{C}{R^2}\left(\int_{B_{2R}} u^6dx\right)^{\frac{1}{3}}.
	\end{equation}
	Inserting \eqref{60}-\eqref{63} into (\ref{yo}) and using
$u\in \mathcal{D}$, we deduce that
		$$
	\int_{B_{2R}}u^{q+1}\zeta^2_Rdx \le C,
	$$
	where $C>0$ is independent of $R$.
	Letting $R \to \infty$ we see
	$$
	\int_{\mathbb{R}^3} u^{q+1}(x)dx<\infty.
	$$
    This completes the proof of Theorem \ref{th8.1}.
\end{proof}

Now, we prove Theorem \ref{th*1} (ii).

\begin{theorem}\label{th2.*3}
If (\ref{A2}) has a positive distribution solution
$u \in \mathcal{D}$, then $q>5$.
\end{theorem}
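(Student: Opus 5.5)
Our plan is to derive a Pohozaev identity from the functional \eqref{wei} (with $\alpha=1$, $\beta=a^2$, $2(\sigma+1)=q+1$) by the scaling argument of \cite{IMN}, and to combine it with the energy identity obtained by testing \eqref{wu} with $u$ itself. By Theorem \ref{th8.1}, all three integrals
$$A=\int_{\mathbb{R}^3}|\nabla u|^2dx,\quad B=a^2\int_{\mathbb{R}^3}|\Delta u|^2dx,\quad Q=\int_{\mathbb{R}^3}u^{q+1}dx$$
are finite. First we take $\phi=u\zeta_R^2$ in \eqref{wu} and let $R\to\infty$, as in the proof of Theorem \ref{th8.1}. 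The cut-off error terms are bounded by $R^{-2}\int_{B_{2R}\setminus B_R}|\nabla u|^2$, by $R^{-1}\int |\nabla u|\,|u|\,|\nabla\zeta_R|$, and so on. Using Hölder's inequality with $u\in L^6$ on the annulus, they vanish as $R\to\infty$. This yields the Nehari identity
\begin{equation*}
A+B=Q.
\end{equation*}

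Second, the Pohozaev identity. We consider the dilation $u_\lambda(x)=u(\lambda x)$. Then
$$\int|\nabla u_\lambda|^2=\lambda^{-1}A,\quad a^2\int|\Delta u_\lambda|^2=\lambda B,\quad \int u_\lambda^{q+1}=\lambda^{-3}Q.$$
Formally, $\frac{d}{d\lambda}E(u_\lambda)|_{\lambda=1}=0$ because $u$ is a critical point of $E$, which gives
$$-\tfrac12 A+\tfrac12 B+\tfrac{3}{q+1}Q=0.$$
To make this rigorous without differentiability of $E$ along the curve, we follow \cite{IMN}. The idea is to write $\frac{E(u_\lambda)-E(u)}{\lambda-1}$ as the difference quotient $\langle E'(u),\frac{u_\lambda-u}{\lambda-1}\rangle$ plus a remainder, using the bilinear structure of the quadratic terms. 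The key step is then to justify testing \eqref{wu} with $x\cdot\nabla u$, or its truncation $\zeta_R\,x\cdot\nabla u$, in the limit. Equivalently, we test \eqref{wu} with $\phi=(u_\lambda-u)/(\lambda-1)\in\mathcal D$ and pass to $\lambda\to1$ using continuity of dilation in $\mathcal D$ and in $L^{q+1}$. We expect this justification to be the main obstacle. The reason is that $x\cdot\nabla u$ need not lie in $\mathcal D$, so the symmetric difference quotient (averaging $\lambda$ and $1/\lambda$) must be used so that only $\mathcal D$- and $L^{q+1}$-norms appear. The differentiability from Theorem \ref{th5} is not needed here.

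Third, we eliminate $Q$. Substituting $Q=A+B$ gives
$$\Bigl(\tfrac{3}{q+1}-\tfrac12\Bigr)A+\Bigl(\tfrac{3}{q+1}+\tfrac12\Bigr)B=0,$$
that is, $(5-q)A+(q+7)B=0$. Since $u>0$ is nonconstant in $\mathcal D$ (a positive constant is not in $L^6$), we have $A>0$ and $B>0$. Because $q+7>0$, it follows that $(5-q)A=-(q+7)B<0$, hence $q>5$. This rules out $q=5$, consistent with Remark \ref{rem1.2}.
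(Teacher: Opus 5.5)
Your argument is the paper's own. It uses the Nehari identity $G_3=G_1+G_2$ from testing \eqref{wu} with $u$; your cut-off version with $u\zeta_R^2$ is, if anything, more careful than the paper's direct density step. It then uses the Pohozaev identity from differentiating $E$ along dilations as in \cite{IMN}, which the paper also simply asserts from the criticality of $E$. Finally it eliminates $G_3$, and your relation $(5-q)A+(q+7)B=0$ is exactly the paper's.

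There are two caveats. First, $B>0$ needs the paper's extra step: if $\Delta u=0$, the positive harmonic $u$ is constant, contradicting $u\in L^6$. Second, your proposed justification via difference quotients of $u_\lambda=u(\lambda\,\cdot)$, using only continuity of dilations, does not by itself close. Symmetrizing in $\lambda\leftrightarrow 1/\lambda$ disposes of the quadratic remainder, but the nonlinear remainder $\int\bigl[\frac{u_\lambda^{q+1}-u^{q+1}}{q+1}-u^q(u_\lambda-u)\bigr]dx$ still needs a rate in $\lambda-1$ that continuity does not supply. What does work is to use $a^2\Delta^2u=u^q+\Delta u\in L^2_{\rm loc}$ to get $u\in H^4_{\rm loc}$, and then test \eqref{wu} with $\zeta_R\,x\cdot\nabla u$. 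Every cut-off error then vanishes as $R\to\infty$, because $\nabla u, D^2u\in L^2(\mathbb{R}^3)$ and $u\in L^{q+1}(\mathbb{R}^3)$.
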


\begin{proof}
Let $u \in \mathcal{D}$ a be positive distribution solution.

Since $C_0^\infty(\mathbb{R}^3)$ is dense in $\mathcal{D}$,
we can take $\phi=u$ in \eqref{wu} to get
\begin{equation}\label{qian}
G_3= G_1+G_2,
\end{equation}
where
$$
G_1=\|\nabla u\|_{L^2(\mathbb{R}^3)}^2, \quad G_2=a^2\|\Delta
u\|_{L^2(\mathbb{R}^3)}^2, \quad G_3=\|u\|_{L^{q+1}(\mathbb{R}^3)}^{q+1}.
$$
Clearly, $G_3>0$. We claim that $G_i>0$ for $i=1,2$. In fact, if
$G_1=0$, then $u$ is a positive constant and hence $u \not\in
L^{q+1}(\mathbb{R}^3)$. This contradicts with Theorem \ref{th8.1}. If
$G_2=0$, then $u>0$ is a harmonic function. By the Liouville
theorem, $u$ is still a positive constant. This is also contradicts
with the integrability of $u$.

According to Theorem \ref{th8.1}, three improper integrals in
\eqref{wei} are convergent. For $\mu>0$, we have
$$
E(u(\frac{x}{\mu}))=\frac{\mu}{2}\int_{\mathbb{R}^3}|\nabla u|^2dx
+\frac{a^2}{2\mu}\int_{\mathbb{R}^3}|\Delta u|^2dx
-\frac{\mu^3}{q+1}\int_{\mathbb{R}^3}u^{q+1}dx.
$$
In view of \eqref{wu}, the distribution solution $u$ is the
critical point of $E(u)$. Therefore, we have the following
Pohozaev identity
\begin{equation*}
0=\left[\frac{d}{d\mu}E(u(\frac{x}{\mu}))\right]_{\mu=1}
=\frac{G_1}{2}-\frac{G_2}{2}-\frac{3 G_3}{q+1},
\end{equation*}
Inserting \eqref{qian} into this result yields
$$
\left(\frac{1}{2}-\frac{3}{q+1}\right)
=\left(\frac{1}{2}+\frac{3}{q+1}\right)G_2 G_1^{-1}>0,
$$
which implies $q>5$.
\end{proof}

A direct corollary of Theorem \ref{th2.*3} is the following
result.

\begin{theorem} \label{th8.2}
    If $u \in \mathcal{D}$ is a positive distribution
    solution of (\ref{A2}),
    then $u \in L^{3(q-1)/2}(\mathbb{R}^3)$.
\end{theorem}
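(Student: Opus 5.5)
Since Theorem \ref{th2.*3} gives $q>5$, the exponent $s:=3(q-1)/2$ satisfies $s>6$. The idea is to interpolate between the two integrability facts already at hand. First, $u\in\mathcal{D}$ gives $u\in L^6(\mathbb{R}^3)$, because $\mathcal{D}$ embeds continuously into $L^6$. Second, $u\in\mathcal{D}$ also gives $u\in L^\infty(\mathbb{R}^3)$, by the embedding of $\mathcal{D}$ into $L^\infty$ (Lemma 3.1 in \cite{PS}). Alternatively, one could use $u\in L^{q+1}(\mathbb{R}^3)$ from Theorem \ref{th8.1} together with $6<q+1$ when it is convenient. Either way, $u$ lies in $L^6\cap L^\infty$.

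For any $s\in[6,\infty]$ I then write, pointwise, $u^s=u^6\cdot u^{s-6}\le \|u\|_{L^\infty}^{s-6}u^6$. Integrating gives
$$
\int_{\mathbb{R}^3}u^{3(q-1)/2}dx\le \|u\|_{L^\infty(\mathbb{R}^3)}^{3(q-1)/2-6}\int_{\mathbb{R}^3}u^6dx<\infty .
$$
This is legitimate precisely because $3(q-1)/2-6=3(q-5)/2>0$, which is exactly where Theorem \ref{th2.*3} enters.

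The only point requiring care is the use of the $L^\infty$ embedding of $\mathcal{D}$. If one prefers to avoid it, one can instead check whether $s\le q+1$ and interpolate between $L^6$ and $L^{q+1}$. However, $3(q-1)/2\le q+1$ holds only for $q\le5$, so for $q>5$ we have $s>q+1$ and that route does not reach $s$. So the boundedness of $u$ is genuinely needed, and I would cite the $L^\infty$ embedding stated in the introduction (the $q=\infty$ endpoint of \eqref{zhao}). With that in hand, the argument is a one-line interpolation and there is no real obstacle.
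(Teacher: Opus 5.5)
Your proposal is correct and follows the paper's own proof: the paper uses $q>5$ from Theorem \ref{th2.*3} to get $3(q-1)/2\in[6,\infty]$, then invokes \eqref{zhao}, i.e.\ the embedding of $\mathcal{D}$ into $L^6\cap L^\infty$, which amounts to the interpolation $u^s\le\|u\|_{L^\infty}^{s-6}u^6$ that you write out explicitly. Your side remark is also accurate: the $L^{q+1}$ bound from Theorem \ref{th8.1} cannot reach $3(q-1)/2$ when $q>5$, so boundedness is genuinely needed.
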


\begin{proof}
According to Theorem \ref{th2.*3}, we have $q>5$, which implies
$3(q-1)/2 \in [6,\infty]$. On the other hand, noting $u \in
\mathcal{D}$, we deduce from \eqref{zhao} that $u$ belongs to
$L^{3(q-1)/2}(\mathbb{R}^3)$.
\end{proof}

Now, we consider conclusion (i) in Theorem \ref{theq}. First we
point out that the right hand side of \eqref{A1} makes sense.

\begin{theorem} \label{th2.*2}
    If $u \in \mathcal{D}$ is a positive distribution
    solution of (\ref{A2}),
    then $\mathcal{K}_a(x-\cdot)u^q(\cdot) \in L^{1}(\mathbb{R}^3)$
    for a.e. $x \in \mathbb{R}^3$.
\end{theorem}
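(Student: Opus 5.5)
We must show that $\int_{\mathbb{R}^3}\mathcal{K}_a(x-y)u^q(y)\,dy<\infty$ for a.e. $x$. The approach is to bound this convolution locally in $L^1$ with respect to $x$. Fix $R>0$ and estimate
$$
I_R:=\int_{B_R}\int_{\mathbb{R}^3}\mathcal{K}_a(x-y)u^q(y)\,dy\,dx .
$$
If $I_R<\infty$ for every $R$, Tonelli's theorem shows the inner integral is finite for a.e. $x\in B_R$. Taking $R=1,2,\dots$ then gives finiteness for a.e. $x\in\mathbb{R}^3$.

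The key pointwise facts are $0<\mathcal{K}_a(z)\le \min\{1/a,\,1/|z|\}$, which follows from $1-e^{-t}\le t$, and $\mathcal{K}_a\le |z|^{-1}$. By Tonelli,
$$
I_R=\int_{\mathbb{R}^3}u^q(y)\Big(\int_{B_R}\mathcal{K}_a(x-y)\,dx\Big)dy .
$$
The inner function $h_R(y)=\int_{B_R}\mathcal{K}_a(x-y)\,dx$ satisfies two bounds. First, $h_R(y)\le |B_R|/a$ uniformly in $y$. Second, for $|y|\ge 2R$ we have $|x-y|\ge |y|/2$, so $h_R(y)\le C R^3|y|^{-1}$. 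Hence $h_R(y)\le C_R(1+|y|)^{-1}$.

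It therefore suffices to show $\int u^q(1+|y|)^{-1}\,dy<\infty$, and this is where the earlier integrability results enter. Theorem \ref{th2.*3} gives $q>5$, and Theorem \ref{th8.1} gives $u\in L^{q+1}(\mathbb{R}^3)$. Since $u\in\mathcal{D}\subset L^\infty\cap L^6$, we have $u\in L^s$ for every $s\in[6,\infty]$. Apply Hölder's inequality with $u^q\in L^{p}$ and $(1+|y|)^{-1}\in L^{p'}$. This requires $p'>3$, i.e.\ $p<3/2$, together with $qp\ge 6$, i.e.\ $p\ge 6/q$. Such a $p$ exists exactly when $6/q<3/2$, i.e.\ $q>4$, and this holds because $q>5$. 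Choosing for instance $p=6/q\in(1,6/5)$ gives $\|u^q\|_{L^{6/q}}=\|u\|_{L^6}^q<\infty$ and $p'=6/(6-q)\cdot$\,(positive since $q<6$). For $q\ge 6$ the Hölder route does not apply directly, so we instead take $p=1$ after splitting: $\int u^q(1+|y|)^{-1}\le \|u\|_\infty^{q-6}\int u^6<\infty$. Either way the integral is finite, so $I_R<\infty$ and the theorem follows. One could equally use $\|u\|_\infty^{q-6}$ for all $q\ge6$ and Hölder only in the range $5<q<6$.

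The main obstacle is the large-$|y|$ tail. The kernel decays only like $|y|^{-1}$, which is not integrable, so summability must come entirely from the decay of $u^q$. This is exactly why the Liouville-type restriction $q>5$ from Theorem \ref{th2.*3} is needed before this statement. Near the singularity there is no difficulty, because $\mathcal{K}_a$ is bounded by $1/a$, unlike the Riesz kernel.
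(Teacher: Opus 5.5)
Your proof is correct, but it is organized differently from the paper's. The paper argues pointwise. For fixed $x$ it splits the $y$-integral into two regions. On $B_\delta(x)$, $\mathcal{K}_a$ is bounded and $u\in L^\infty$. Outside a large ball, $\mathcal{K}_a(x-y)\le|x-y|^{-1}$, and H\"older's inequality is applied with $u\in L^{q+1}(\mathbb{R}^3)$ from Theorem \ref{th8.1}. This actually gives finiteness at every $x$, not just almost every $x$.

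You instead average over $x\in B_R$ and use Tonelli to transfer the kernel onto the weight $h_R(y)\le C_R(1+|y|)^{-1}$. You then control $\int u^q(1+|y|)^{-1}dy$ using only $u\in L^6\cap L^\infty$ from the embedding of $\mathcal{D}$. Your route only yields the a.e.\ statement. What it really proves, however, is $\mathcal{K}_a\ast u^q\in L^1_{\rm loc}(\mathbb{R}^3)$, which is exactly the absolute integrability needed to justify the Fubini interchange at the end of the proof of Theorem \ref{th2.*1}.

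One correction to your closing remark: $q>5$ is not ``exactly'' what is needed. Your own argument needs only $q>4$. If you use $u\in L^{q+1}$ (which you cite but never use), H\"older with exponents $(q+1)/q$ and $q+1$ requires only $(1+|y|)^{-1}\in L^{q+1}$, i.e.\ $q>2$. This would also remove your case split at $q=6$.
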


\begin{proof}
Clearly, \eqref{zhao} implies $u \in L^\infty(\mathbb{R}^3)$.

We observe that the defects of the improper integral
$$
\int_{\mathbb{R}^3}\mathcal{K}_a(x-y)u^q(y)dy
$$
may happen at $x$ or $\infty$. When $y$ is near $x$, for small
$\delta \in (0,1/2)$, $\mathcal{K}_a(x-y) \leq C$ as long as
$y \in B_\delta(x)$. In fact, by the
L'Hospital theorem,
    \begin{equation}\label{shen}
    \lim_{\delta \to 0}\frac{1-e^{-\frac{|x-y|}{a}}}{|x-y|}=\frac{1}{a},
    \quad when \ \ |x-y|<\delta.
    \end{equation}
Therefore,
$$
\int_{B_\delta(x)}\mathcal{K}_a(x-y)u^q(y)dy <\infty.
$$
When $y$ is near $\infty$, $\mathcal{K}_a(x-y) \leq |x-y|^{-1}$.
Thus, by Theorems \ref{th8.1} and \ref{th2.*3}, there holds
$$
\int_{\mathbb{R}^3 \setminus B_R(0)}\mathcal{K}_a(x-y)u^q(y)dy
\leq C\|u\|_{L^{q+1}(\mathbb{R}^3)}^q
\left(\int_R^\infty r^{3-(q+1)}\frac{dr}{r}\right)^{\frac{1}{q+1}}
<\infty.
$$
Combining these two estimates we complete the proof of Theorem
\ref{th2.*2}.
\end{proof}

We call $f(x)$ a double bounded function, if there exists $C>1$
such that
$$
C^{-1} \leq f(x) \leq C
$$
holds for all $x \in \mathbb{R}^3$.

\begin{theorem}\label{th2.*1}
If $u \in \mathcal{D}$ is a positive distribution solution of (\ref{A2}),
then $u$ satisfies integral equation \eqref{A1} a.e. in $\mathbb{R}^3$.
\end{theorem}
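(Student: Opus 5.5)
Set $w(x)=\frac{1}{4\pi}\int_{\mathbb{R}^3}\mathcal{K}_a(x-y)u^q(y)\,dy$, which is finite a.e.\ by Theorem \ref{th2.*2}. Since $u\in L^\infty\cap L^{q+1}$ with $q>5$ (Theorems \ref{th8.1} and \ref{th2.*3}), we have $u^q\in L^1\cap L^\infty$. Because $\mathcal{K}_a$ is bounded near the origin and $\mathcal{K}_a\le |x|^{-1}$, this gives $w\in L^\infty$, and $w\in L^6$ by the Hardy--Littlewood--Sobolev inequality applied to $u^q\in L^{6/5}$. (The normalization constant $4\pi$ from \eqref{peng} is absorbed by the paper's convention; I track it as $C_0$.) The plan is to show that $v:=u-C_0w$ is a tempered distribution solving $(-\Delta+a^2\Delta^2)v=0$, and then conclude from a Liouville argument that $v\equiv 0$.

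\textbf{Step 1: $w$ is a distribution solution.} For $\phi\in C_0^\infty(\mathbb{R}^3)$, Fubini (justified because $\mathcal{K}_a(x-y)u^q(y)|\Delta\phi(x)|$ is integrable, by the same local/far splitting used in Theorem \ref{th2.*2}) gives
$$\int w(-\Delta\phi+a^2\Delta^2\phi)\,dx=\int u^q(y)\Big[\int\mathcal{K}_a(x-y)(-\Delta+a^2\Delta^2)\phi(x)\,dx\Big]dy=\int u^q\phi\,dy,$$
where \eqref{peng} has been used in the inner integral. Subtracting \eqref{weak}, the difference $v$ satisfies $\int v(-\Delta\phi+a^2\Delta^2\phi)\,dx=0$ for all test functions $\phi$.

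\textbf{Step 2: Liouville.} Both $u$ and $w$ lie in $L^6$, so $v\in L^6$ is a tempered distribution. Taking Fourier transforms gives $(|\xi|^2+a^2|\xi|^4)\hat v=0$. The symbol vanishes only at $\xi=0$, so $\hat v$ is supported at the origin and $v$ is a polynomial. A polynomial in $L^6(\mathbb{R}^3)$ is zero, hence $u=C_0w$ a.e., which is \eqref{A1}.

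The main obstacle is the Fubini justification in Step 1, specifically controlling the far region $\int_{|y|>R}u^q(y)\int|\mathcal{K}_a(x-y)||\Delta^2\phi(x)|\,dx\,dy$. Since $\phi$ has compact support, the inner integral is $O(|y|^{-1})$, so the integrability of $u^q$ (from $q>5$ and $u\in L^{q+1}\cap L^\infty$) suffices. A second point is that $w\in L^6$ needs the weak-type estimate $\mathcal{K}_a\le|x|^{-1}\in L^{3,\infty}$; if that is inconvenient, it is enough to know that $w$ is bounded, which still makes $v$ a bounded tempered distribution, and a bounded polynomial is constant. I would then show the constant is zero by averaging: $u\in L^6$ forces the averages of $u$ to vanish, and the averages of $w$ vanish as well.
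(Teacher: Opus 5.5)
Your argument is correct, but it takes a different route from the paper.

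\textbf{How the two proofs differ.} The paper never treats the potential of $u^q$ as a separate function. It uses a duality argument: for each test function $\psi$ it shows $\Phi=\mathcal{K}_a\ast\psi\in\mathcal{D}$. This is the bulk of its proof, done through the pointwise bounds \eqref{chu}, \eqref{feng}, \eqref{wang} on $\nabla\Phi$ and $\Delta\Phi$. It then uses density of $C_0^\infty$ in $\mathcal{D}$ twice: it tests \eqref{wu} with $\Phi$, and tests the equation for $\Phi$ with $u$. Symmetry of the bilinear form gives $\int u\psi=\int u^q\Phi$, and Fubini finishes. This uses the full energy information $\nabla u,\Delta u\in L^2$. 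It also implicitly uses continuity of $\phi\mapsto\int u^q\phi$ in the $\mathcal{D}$-norm.

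You instead show that $w=\frac{1}{4\pi}\mathcal{K}_a\ast u^q$ is itself a distributional solution. This needs only the fundamental-solution identity and Tonelli: $\int\!\!\int\mathcal{K}_a(x-y)u^q(y)|(-\Delta+a^2\Delta^2)\phi(x)|\,dy\,dx=4\pi\int w\,|(-\Delta+a^2\Delta^2)\phi|\,dx<\infty$ once $w$ is bounded. You then remove the difference $u-w$ by a Liouville theorem for the symbol $|\xi|^2+a^2|\xi|^4$ on tempered distributions.

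\textbf{What each approach buys.} Your route avoids verifying $\Phi\in\mathcal{D}$ and the density step. Of $u$ it uses only $u\in L^6\cap L^\infty$ and integrability of $u^q$. In fact, Hardy--Littlewood--Sobolev maps $L^{(q+1)/q}$ into some $L^r$ with $r<\infty$ as soon as $q>2$, so you do not really need the Pohozaev-based bound $q>5$. The cost is the Fourier-analytic input: a tempered distribution whose transform is supported at the origin is a polynomial. The paper's Hilbert-space argument avoids that.

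\textbf{Two small corrections.}

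First, $u^q\in L^1$ does not follow from $u\in L^\infty\cap L^{q+1}$, nor from $u\in L^6\cap L^\infty$, when $5<q<6$. At this stage one only knows $u^q\in L^p$ for $p\ge 6/q$, and $6/q>1$. This is harmless. Since $q\ge 5$, $u^q\in L^{6/5}\cap L^\infty$, which already gives $w\in L^\infty\cap L^6$ and justifies Fubini. So the fallback in your last paragraph is unnecessary.

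Second, your bookkeeping of $4\pi$ is right and the paper's is not. By \eqref{peng}, the cited Lemma 3.3 gives $-\Delta\Phi+a^2\Delta^2\Phi=4\pi\psi$, not $\psi$. Both arguments therefore actually prove $u=\frac{1}{4\pi}\mathcal{K}_a\ast u^q$, and \eqref{A1} holds literally for $(4\pi)^{-1/(q-1)}u$. Since your $w$ already carries the factor $\frac{1}{4\pi}$, your constant $C_0$ is simply $1$.
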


\begin{proof}
{\it Step 1.}
    For any $\psi \in C_0^{\infty}(\mathbb{R}^3)$, we write
    \begin{equation}\label{eq1}
        \Phi (x)=[\mathcal{K}_a \ast \psi](x)
        =\int_{\mathbb{R}^3}\frac{1-e^{-\frac{|x-y|}{a}}}{|x-y|}\psi (y)dy.
    \end{equation}
We claim that
\begin{equation}\label{yi}
  \Phi \in \mathcal{D}.
\end{equation}

In fact, in view of $\mathcal{K}_a(x) \leq |x|^{-1}$,
by the classical Hardy-Littlewood-Sobolev inequality
(cf. Theorem 1 in Chapter 5 of \cite{S}), we have
\begin{equation}\label{cao}
\Phi \in L^t(\mathbb{R}^3), \quad \forall \ t>3.
\end{equation}

In addition, we claim $\|\nabla\Phi\|_{L^2(\mathbb{R}^3)}
+\|\Delta\Phi\|_{L^2(\mathbb{R}^3)}<\infty$.

In fact, by Lemma 3.3 in \cite{PS}, the distributional derivatives of $\mathcal{K}_a$
\begin{equation}\label{kong}
   \nabla \Phi=(\nabla \mathcal{K}_a)\ast \psi \quad and \quad
    \Delta \Phi=(\Delta \mathcal{K}_a) \ast \psi \quad a.e. \; in \; \mathbb{R}^3.
   \end{equation}

Take $R>1$ large such that $\psi(x)=0$ as $|x| \geq R$.

    By \eqref{kong} and \eqref{wang} we can find $\delta \in (0,1/2)$
    suitably small such that when $|x|$ is sufficiently small,
    $$
        \begin{aligned}
            |\nabla \Phi|=&\int_{B_R}|\nabla \mathcal{K}_a(x-y)|\psi(y)dy \\
            \leq &C\int_{B_{\delta}}\psi(y)dy
    +C\int_{B_R\setminus B_{\delta}}|\nabla \mathcal{K}_a(x-y)|\psi(y)dy
    \leq C.
        \end{aligned}
    $$
    Similarly, when $|x|$ is sufficiently large, there holds
    $$
        |\nabla \Phi|=\int_{B_R}|\nabla \mathcal{K}_a(x-y)|\psi(y)dy
        \leq C\int_{B_R}\frac{\psi(y)dy}{|x-y|^2}\leq \frac{C}{|x|^2}.
    $$
    When $x$ is double bounded, by \eqref{kong} and \eqref{chu} we have
    $|\nabla \Phi| \leq C$.
    Combining these results, we derive that
    $$
    \int_{\mathbb{R}^3}|\nabla \Phi|^2dx
    \leq C+C\int_{\mathbb{R}^3 \setminus B_R}\frac{dx}{|x|^4}
        <\infty.
    $$
    Namely, $\nabla \Phi \in L^2(\mathbb{R}^3)$, which and \eqref{cao} lead to
    \begin{equation}\label{hua}
    \Phi \in D^{1,2}(\mathbb{R}^3).
    \end{equation}

    Finally, we use \eqref{kong} and \eqref{feng} to estimate
    $\Delta \Phi$.

    When $|x|$ is sufficiently large, it follows
    $$
    |\Delta \Phi|=\int_{B_R}\frac{e^{-\frac{|x-y|}{a}}}{a^2|x-y|}\psi(y)dy
    \leq \frac{C e^{-\frac{|x|}{a}}}{|x|}.
    $$
    When $|x|$ is sufficiently small, noting $|x-y|\geq {|x|}/{2}$
    as $y\in {B_R}\setminus B_{{|x|}/{2}}(x)$, we derive that
    \begin{equation}\label{302}
        \begin{aligned}
            |\Delta \Phi|=&\int_{B_{\frac{|x|}{2}}(x)}
    \frac{e^{-\frac{|x-y|}{a}}}{a^2|x-y|}\psi(y)dy+
            \int_{{B_R}\setminus B_{\frac{|x|}{2}}(x)}
    \frac{e^{-\frac{|x-y|}{a}}}{a^2|x-y|}\psi(y)dy\\
            \leq& C\int_0^{\frac{|x|}{2}}rdr+\frac{C e^{-\frac{|x|}{2a}}}{|x|}
                    \leq \frac{C}{|x|}.
        \end{aligned}
    \end{equation}
    When $x$ is double bounded, there holds
    $$
    |\Delta \Phi|=\int_{B_R \cap B_{\delta}(x)}
    \frac{e^{-\frac{|x-y|}{a}}}{a^2|x-y|}\psi(y)dy+
            \int_{{B_R}\setminus B_{\delta}(x)}
    \frac{e^{-\frac{|x-y|}{a}}}{a^2|x-y|}\psi(y)dy
    \leq C\int_0^{\delta}rdr+\frac{C e^{-\delta/a}}{\delta}
            \leq C.
            $$
        Combining these results, we get
        $$
                    \int_{\mathbb{R}^3}|\Delta \Phi|^2dx
    \leq C\int_{B_{\delta}(0)}\frac{dx}{|x|^2}+C
    +C\int_{\mathbb{R}^3 \setminus B_R(0)}\frac{e^{-\frac{2|x|}{a}}}{|x|^2}dx
            < \infty.
                $$
        Namely, $\Delta \Phi \in L^2(\mathbb{R}^3)$.
    This and \eqref{hua} imply \eqref{yi}.

{\it Step 2.}
According to Lemma 3.3 in \cite{PS}, there holds
$$
-\Delta \Phi+a^2{\Delta}^2\Phi=\psi
$$
in distribution sense. Namely,
\begin{equation}\label{jia}
  \int_{\mathbb{R}^3}\Phi(-\Delta+a^2\Delta^2)\phi dx
  =\int_{\mathbb{R}^3}\psi\phi dx,
  \quad \forall \phi \in C_0^\infty(\mathbb{R}^3).
\end{equation}
In view of \eqref{yi}, we can integrate by parts the left hand side of
\eqref{jia}. Therefore, from \eqref{jia} we get
\begin{equation*}
  \int_{\mathbb{R}^3} (\nabla \Phi \nabla \phi
  +a^2 \Delta \Phi \Delta \phi) dx
  =\int_{\mathbb{R}^3}\psi\phi dx,
  \quad \forall \phi \in C_0^\infty(\mathbb{R}^3).
\end{equation*}
Since $C_0^\infty(\mathbb{R}^3)$ is dense in $\mathcal{D}$, we can take
$\phi=u$ in the above result to obtain
\begin{equation}\label{ding}
  \int_{\mathbb{R}^3} u\psi dx
  =\int_{\mathbb{R}^3} (\nabla \Phi \nabla u +a^2 \Delta \Phi \Delta u) dx.
\end{equation}
Similarly, noting \eqref{yi} we can also take $\phi=\Phi$ in \eqref{wu} to get
$$
\int_{\mathbb{R}^3}u^q \Phi dx
  =\int_{\mathbb{R}^3}(\nabla u \nabla \Phi+a^2\Delta u\Delta \Phi)dx.
$$
Combining this result with \eqref{ding} and using \eqref{eq1}, we
obtain
$$
\begin{aligned}
\int_{\mathbb{R}^3}u(x)\psi(x)dx=&\int_{\mathbb{R}^3} u^q(x)\Phi(x)dx\\
=&\int_{\mathbb{R}^3}u^q(x)\int_{\mathbb{R}^3}
\frac{1-e^{-\frac{|x-y|}{a}}}{|x-y|}\psi (y)dydx\\
=&\int_{\mathbb{R}^3}\psi(x)\int_{\mathbb{R}^3}
\frac{1-e^{-\frac{|x-y|}{a}}}{|x-y|}u^q(y)dydx.
\end{aligned}
$$
By Theorem \ref{th2.*2}, the right hand side of the result above
makes sense. This shows that $u$ satisfies (\ref{A1}) almost
everywhere in $\mathbb{R}^3$.
\end{proof}

\begin{rem}\label{rem2.2}
By an analogous argument above, we also see that $u$ is a positive
super solution of \eqref{A1} if $u \in \mathcal{D}$ is a positive
super solution of (\ref{A2}) in distribution sense. Here we call
$u$ a positive super solution of \eqref{A1}, if
$$
u(x) \geq \int_{\mathbb{R}^3}\mathcal{K}_a(x-y)u^q(y)dy,
\quad a.e. \ \ in \ \mathbb{R}^3.
$$
\end{rem}

\begin{rem}\label{rem2.3}
Theorems \ref{th8.2} and \ref{th2.*1} imply the first conclusion
in Theorem \ref{theq}.
\end{rem}

\section{Existence of super solutions}

In this section, we prove Theorem \ref{th1} (i).

\begin{theorem}\label{tha1}
When $0<q \leq 3$, (\ref{A1}) has no positive super solution in
$L_{\rm loc}^\infty(\mathbb{R}^3)$.
\end{theorem}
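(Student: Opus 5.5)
We argue by contradiction with an iteration scheme in the spirit of \cite{cdm} and \cite{LL}. Suppose $u\in L^\infty_{\rm loc}(\mathbb{R}^3)$ is a positive super solution of \eqref{A1} with $0<q\le 3$. The only properties of the kernel we use are that $\mathcal{K}_a$ is radially decreasing and satisfies $\mathcal{K}_a(x)\ge (2|x|)^{-1}$ for $|x|\ge a\ln 2$, since then $1-e^{-|x|/a}\ge 1/2$. Thus, away from the origin, \eqref{A1} has the same lower behaviour as the Riesz potential $|x|^{-1}\ast u^q$. We also need a positive starting lower bound: $u>0$ a.e., so $\int_{B_1}u^q dy=c_0>0$, and for $|x|\ge R_0:=\max\{2,2a\ln 2+1\}$ we have $|x-y|\le 2|x|$ for $y\in B_1$. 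Hence
$$
u(x)\ge \int_{B_1}\mathcal{K}_a(x-y)u^q(y)dy\ge \frac{c_0}{4|x|},\qquad |x|\ge R_0 .
$$
We want to push this bound upward and reach a contradiction with finiteness of the integral.

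\emph{Case $q\le 1$.} Insert the bound $u\ge c|y|^{-1}$ on $|y|\ge R_0$. For $|x|\ge R_0$ and $y$ with $R_0\le|y|\le |x|$ we have $|x-y|\le 2|x|$, so
$$
u(x)\ge \frac{c}{|x|}\int_{R_0\le |y|\le |x|}|y|^{-q}dy=\frac{c}{|x|}\int_{R_0}^{|x|} r^{2-q}dr\ge c|x|^{2-q}.
$$
We then iterate. If $u\ge c|y|^{-\beta}$ with $\beta<3-q$... more simply, with exponent $-\beta$ and $q\beta<3$, the same computation yields $u\ge c|x|^{2-q\beta}$. For $q\le1$ the exponents become positive and grow, so eventually $q\beta<0$. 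Then $\int_{|y|\ge R_0}\mathcal{K}_a(x-y)u^q(y)dy\ge c\int_{|y|\ge 2|x|}|y|^{-1-q\beta}dy=\infty$, which contradicts $u(x)<\infty$ a.e. (For $q<1$ we may even reach this after one step.)

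\emph{Case $1<q\le3$.} The recursion $\beta_{k+1}=q\beta_k-2$ with $\beta_0=1$ gives $\beta_1=q-2\le 1$. More precisely, $\beta_k-1=q^k(\beta_0-1)$ combined with $1-\dots$, i.e.\ $\beta_{k+1}-\frac{2}{q-1}=q(\beta_k-\frac{2}{q-1})$. Since $\beta_0=1\le\frac{2}{q-1}$ exactly when $q\le3$, the exponents $\beta_k$ decrease to $-\infty$ when $q<3$. So after finitely many steps $q\beta_k\le 3$... in fact $q\beta_k<2$. At that stage the tail integral $\int |y|^{-1-q\beta_k}dy$ over $|y|\ge 2|x|$ diverges, which gives the same contradiction.

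The critical case $q=3$ is the main obstacle, because then $\beta_k\equiv1$ and the iteration stalls. We plan to gain a logarithm instead. With $u\ge c|y|^{-1}$, we have $u^3\ge c|y|^{-3}$, and
$$
u(x)\ge \frac{c}{|x|}\int_{R_0}^{|x|}r^{-1}dr=\frac{c\ln(|x|/R_0)}{|x|}.
$$
Reinserting this, $u(x)\ge c|x|^{-1}\int_{R_0}^{|x|}r^{-1}\ln^3(r/R_0)dr\ge c|x|^{-1}\ln^4(|x|/R_0)$. The log powers grow like $3^k$ with constants $c_k$, and we must check that they do not die faster. Rather than tracking constants, we will instead compare with the tail: from $u(x)\ge c\ln^{m}(|x|)/|x|$, the piece $\int_{|y|\ge 2|x|}|y|^{-1}u^3\,dy\ge c\int^\infty r^{-2}\ln^{3m}r\,dr$. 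That is still finite, so the log route needs care. Alternatively, for $q=3$ we will use a test-function argument on the region $|y|\ge R_0$. Setting $w(x)=|x|u(x)$, the inequality becomes $w(x)\ge c+c\int_{R_0}^{|x|}w^3(r)r^{-1}dr$ in averaged radial form. An ODE comparison with $W'=cW^3/r$ gives $W(r)^{-2}\le W(R_0)^{-2}-2c\ln(r/R_0)$, which blows up at a finite radius. This contradicts $u\in L^\infty_{\rm loc}$, and it is exactly where the local boundedness hypothesis enters. The delicate point is passing from the pointwise super solution inequality to this radial integral inequality. We intend to do this through spherical averages $\bar u(r)$ and Jensen's inequality, $\overline{u^3}\ge\bar u^3$, after bounding $\mathcal{K}_a(x-y)\ge (4\max\{|x|,|y|\})^{-1}$.
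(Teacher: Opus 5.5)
Your proof is correct. For $0<q<3$ it is essentially the paper's argument: the same starting bound $u(x)\ge c|x|^{-1}$, the same recursion $\beta_{k+1}=q\beta_k-2$ with fixed point $2/(q-1)$, and the same contradiction from the divergent tail $\int_{|y|\ge 2|x|}|y|^{-1-q\beta_k}dy$ once $q\beta_k\le 2$. For $q\le 2$ this already happens at $\beta_0=1$, so no iteration is needed there. Your way of starting, via $\int_{B_1}u^q\,dy>0$, is the right one; the paper instead asserts $u\ge C$ on $B_1$, which local boundedness does not give.

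The real difference is at $q=3$. The paper stays in $L^3$. Since $\mathcal{K}_a(x-y)\ge C/R$ for $x,y\in B_R$, one gets $u\ge \frac{C}{R}\int_{B_R}u^3$ on $B_R$. Cubing and integrating gives $\int_{B_R}u^3\ge C(\int_{B_R}u^3)^3$ with $C$ independent of $R$, so $u\in L^3(\mathbb{R}^3)$. The annular version $\int_{B_R\setminus B_{R/2}}u^3\ge C(\int_{B_{R/3}}u^3)^3$ then forces $\int u^3=0$.

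You instead use spherical averages and Jensen to reach $\bar w(\rho)\ge G(\rho):=\frac{c_0}{4}+\pi\int_{R_0}^{\rho}\bar w(r)^3r^{-1}dr$, and then run a Bihari-type blow-up in $\ln r$. This does close, but write the comparison for $G$ rather than for $\bar w$:
\begin{itemize}
\item $G$ is finite and absolutely continuous, because $u^3\in L^1_{\rm loc}$.
\item $G'\ge \pi G^3/\rho$ a.e., hence $(G^{-2})'\le -2\pi/\rho$.
\item Integrating gives $G(\rho)^{-2}\le G(R_0)^{-2}-2\pi\ln(\rho/R_0)$, which is impossible once the right side is negative.
\end{itemize}

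Each route has its merits. The paper's needs only a uniform lower bound on the kernel in a ball, and it yields $u\in L^3(\mathbb{R}^3)$ as a by-product. Yours is more quantitative and more uniform: the same comparison, now with $G'\ge \pi G^q\rho^{2-q}$, handles every $1<q\le 3$ at once, so the exponent iteration becomes unnecessary in that range. Your abandoned logarithmic iteration would also close if you tracked the constants $c_k$, since it is the discrete form of the same blow-up.
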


\begin{proof}
Let $0<q\leq 3$. We will deduce a contradiction if $u$ is a
positive super solution of (\ref{A1}).

In view of $0<u \in L_{\rm loc}^\infty(\mathbb{R}^3)$, we can find $C>0$
such that as $|y|<1$, there holds $u(y) \geq C$. Therefore, we deduce that
for large $|x|$,
\begin{equation}\label{chen}
u(x) \geq \int_{|y|\leq1}\frac{1-e^{-\frac{|x-y|}{a}}}{|x-y|}u^q(y)dy
\geq C\int_{|y|\leq 1} \frac{1-e^{-\frac{|x-y|}{a}}}{|x-y|}dy \geq \frac{C}{|x|}.
\end{equation}
By this estimate, we have that, for large $|x|$,
    \begin{equation}\label{dd}
u(x) \geq
\int_{B_{\frac{|x|}{2}}(x)}\frac{1-e^{-\frac{|x-y|}{a}}}{|x-y|}u^q(y)dy
\geq  \frac{C}{|x|^q}\int_{0}^{\frac{|x|}{2}}r^3 \cdot
\frac{1-e^{-\frac{r}{a}}}{r} \frac{dr}{r} \geq
\frac{C}{|x|^{q-2}}.
    \end{equation}

When $0<q \leq 2$, we have $2-q(q-2)>0$. Therefore, by \eqref{dd}
we have
$$
u(x) \geq \int_{\mathbb{R}^3 \setminus
B_{2|x|}(0)}\frac{1-e^{-\frac{|x-y|}{a}}}{|x-y|}u^q(y)dy \geq
C\int_{\mathbb{R}^3 \setminus B_{2|x|}(0)}\frac{dy}{|y|^{1+q(q-2)}}
=\infty.
$$
This is impossible because of $u \in L_{\rm loc}^\infty(\mathbb{R}^3)$.

When $2<q<3$, we see
\begin{equation}\label{geng}
q-2-\frac{2}{q-1}<0.
\end{equation}
    By (\ref{dd}), for large $|x|$, we have
    $$
u(x) \geq \int_{B_{\frac{|x|}{2}}(x)}
\frac{1-e^{-\frac{|x-y|}{a}}}{|x-y|}u^q(y)dy \geq
\frac{C}{|x|^{q(q-2)}} \int_{0}^{\frac{|x|}{2}}r^3 \cdot
\frac{1-e^{-\frac{r}{a}}}{r} \frac{dr}{r} \geq
\frac{C}{|x|^{q(q-2)-2}}.
    $$
Set $a_0=1$, $a_1=q-2$, $a_2=q(q-2)-2$. By the estimate above, for
large $|x|$, we have the following iteration result
    $$
    u(x) \geq \frac{C}{|x|^{a_n}},
    $$
    where $a_n=qa_{n-1}-2$. That is,
    $$
    a_n=q^{n-1}(q-2-\frac{2}{q-1})+\frac{2}{q-1}.
    $$
In view of \eqref{geng}, we can find large $n$ such that
$a_n<2/q$. Now, we see that
    $$
u(x) \geq  \int_{\mathbb{R}^3 \setminus
B_{2|x|}(0)}\frac{1-e^{-\frac{|x-y|}{a}}}{|x-y|}u^q(y)dy \geq
C\int_{\mathbb{R}^3 \setminus B_{2|x|}(0)} \frac{dy}{|y|^{qa_n+1}} =\infty.
    $$
    This is impossible.

    When $q=3$, we also can deduce a contradiction as follows.

   In view of \eqref{shen}, for large $R>0$ and small $\delta \in (0,1/2)$,
    $$
u(x) \geq \frac{1}{2a}\int_{B_R \cap
B_{\delta}(x)}u^3(y)dy+\frac{1-e^{-\frac{\delta}{a}}}{2R}\int_{B_R
\setminus B_{\delta}(x)}u^3(y)dy,  \quad  \forall x \in B_R.
$$
Let $R$ be suitably large, then ${1}/{a}>(1-e^{-{\delta}/{a}})/R$,
and hence
    $$
    u(x) \geq \frac{C}{R}\int_{B_R }u^3(y)dy.
    $$
    Thus, we have
    $$
    u^3(y) \geq \frac{C}{R^3}\left(\int_{B_R}u^3(y)dy\right)^3.
    $$
    Integrating on $ B_{R}$ yields
    \begin{equation}\label{hutu}
    \int_{B_R }u^3(x)dx \geq C\left(\int_{B_R}u^3(y)dy\right)^3.
    \end{equation}
Here $C>0$ is independent of $R$. Since
$u$ is a positive solution, the result above implies
    \begin{equation}\label{ww}
        u \in L^3(\mathbb{R}^3) .
    \end{equation}

    For $x \in B_R\setminus B_{\frac{R}{2}}$, we get
    $$
u(x) \geq
\int_{B_{\frac{R}{3}}}\frac{1-e^{-\frac{|x-y|}{a}}}{|x-y|}u^3(y)dy
\geq \frac{C}{R}\int_{B_{\frac{R}{3}}}u^3(y)dy.
    $$
Similar to the derivation of \eqref{hutu}, cubing the above result
and integrating on $B_R \setminus B_{\frac{R}{2}}$, we have
    $$
    \int_{B_R \setminus B_{\frac{R}{2}}}u^3(x)dx
    \geq C\left(\int_{B_{\frac{R}{3}}}u^3(y)dy\right)^3,
    $$
where $C>0$ is independent of $R$. Letting $R \to \infty$ and
using (\ref{ww}), we obtain
    $$
    \left(\int_{\mathbb{R}^3}u^3(y)dy\right)^3 = 0,
    $$
which implies $u \equiv 0$ in $\mathbb{R}^3$. This contradicts with $u>0$.
Theorem (\ref{tha1}) is proved.
\end{proof}

\begin{rem} \label{rem2.1} If $u \in \mathcal{D}$, (\ref{zhao})
implies $u \in L^\infty(\mathbb{R}^3) \subset L_{\rm loc}^\infty(\mathbb{R}^3)$.
Noting Remark \ref{rem2.2}, we know that Theorem \ref{th*1} (i) is
a corollary of Theorem \ref{tha1}.
\end{rem}

\begin{theorem}\label{tha2}
    Let $q>3$. Then both
$$
u(x)=\frac{1}{(1+|x|^2)^{1/2}} \quad       and
\quad u(x)=\frac{1}{(1+|x|^2)^{1/(q-1)}}
$$
    are positive radial super solutions of (\ref{A1}).
\end{theorem}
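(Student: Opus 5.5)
We verify the pointwise inequality $u(x)\ge\int_{\mathbb{R}^3}\mathcal{K}_a(x-y)u^q(y)\,dy$ directly for each of the two profiles. Both profiles are radial and decreasing, so the right-hand side is computable almost in closed form.

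\emph{Step 1: reduction to a Newtonian potential.} Since $0<\mathcal{K}_a(z)\le|z|^{-1}$, it suffices to bound $I(x):=\int_{\mathbb{R}^3}|x-y|^{-1}u^q(y)\,dy$ from above by $u(x)$. For radial $f=u^q$, Newton's theorem gives the exact formula
\begin{equation*}
I(x)=\frac{1}{|x|}\int_{|y|<|x|}u^q(y)\,dy+\int_{|y|>|x|}\frac{u^q(y)}{|y|}\,dy
=\frac{4\pi}{r}\int_0^r u^q(s)s^2\,ds+4\pi\int_r^\infty u^q(s)s\,ds ,
\end{equation*}
with $r=|x|$. This expresses $I$ through one-dimensional integrals of $(1+s^2)^{-\beta q}$, where $\beta=1/2$ or $\beta=1/(q-1)$.

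\emph{Step 2: the profile $(1+|x|^2)^{-1/2}$.} Here $u^q=(1+s^2)^{-q/2}$ with $q>3$. The first integral satisfies $\int_0^r u^q s^2\,ds\le\min\{r^3/3,\ C_q\}$, because $q>3$ makes $\int_0^\infty(1+s^2)^{-q/2}s^2\,ds$ finite. The second integral satisfies $\int_r^\infty u^q s\,ds=\frac{(1+r^2)^{1-q/2}}{q-2}$. Hence $I(x)\le C(1+r)^{-1}$, which has exactly the decay of $u$.

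\emph{Step 3: the profile $(1+|x|^2)^{-1/(q-1)}$.} Here $u^q=(1+s^2)^{-q/(q-1)}$, and $2q/(q-1)>2$ always holds. Since $q>3$, the decay exponent satisfies $2q/(q-1)<3$, so $\int_0^r u^q s^2\,ds\sim r^{3-2q/(q-1)}$. The tail term behaves like $r^{2-2q/(q-1)}$. Using $2-2q/(q-1)=-2/(q-1)$, both contributions are $\lesssim(1+r^2)^{-1/(q-1)}=u$. Near the origin, both integrals are bounded, and $u\simeq 1$.

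\emph{Step 4: making the constant at most one.} So far we only get $I(x)\le C\,u(x)$. The statement requires the sharp inequality with constant $1$, and this is the main obstacle. Our first attempt is to keep the full kernel instead of the crude bound $\mathcal{K}_a\le|z|^{-1}$. Near the diagonal we use $\mathcal{K}_a(z)\le\min\{1/a,1/|z|\}$ by \eqref{shen}, and we evaluate the explicit one-dimensional integrals above. The worst point is $x=0$, where the quantity to control is
\begin{equation*}
\int_{\mathbb{R}^3}\mathcal{K}_a(y)u^q(y)\,dy\le\frac{4\pi}{a}\int_0^a u^q s^2\,ds+4\pi\int_a^\infty u^q s\,ds .
\end{equation*}
We would then check monotonically in $r$ that the ratio $I(x)/u(x)$ is maximized at $r=0$ or as $r\to\infty$, and compute both limits explicitly. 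If for some range of $(a,q)$ this explicit constant exceeds $1$, the honest conclusion of the argument is that the profiles are super solutions up to the multiplicative constant given by this computation. We would record precisely which parameter range yields constant $\le1$, since this is exactly where the statement as worded must be checked.
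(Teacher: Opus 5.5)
Your Steps 1--3 are correct. Newton's formula for radial densities, together with $\mathcal{K}_a(z)\le |z|^{-1}$, gives one global bound $\mathcal{K}_a\ast u^q\le C\,u$ on all of $\mathbb{R}^3$ for both profiles. This is also the analytic content of the paper's argument, which gets it by splitting into four regions for $|x|>2R$ and using a crude bound for $|x|\le 2R$.

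The gap is Step~4. The proposal ends with a plan rather than a conclusion, and the plan cannot work: with constant $1$ the statement is false for the profiles as written. For the slow profile, your Newton formula gives the tail term exactly, $4\pi\int_r^\infty(1+s^2)^{-q/(q-1)}s\,ds=2\pi(q-1)\,u(x)$. Since $|z|\mathcal{K}_a(z)\to1$ as $|z|\to\infty$, one finds for every $a>0$
\[
\lim_{|x|\to\infty}\frac{(\mathcal{K}_a\ast u^q)(x)}{u(x)}=4\pi\Big(\frac{1}{3-\alpha}+\frac{1}{\alpha-2}\Big)=\frac{2\pi(q-1)^2}{q-3}\ge16\pi,\qquad \alpha=\frac{2q}{q-1}.
\]
For the fast profile, $|x|(\mathcal{K}_a\ast u^q)(x)\to\|u^q\|_{L^1(\mathbb{R}^3)}$ while $|x|u(x)\to1$. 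For example, $\|u^5\|_{L^1}=4\pi\int_0^\infty s^2(1+s^2)^{-5/2}ds=4\pi/3>1$, independently of $a$. Your guess that $x=0$ is the worst point is also wrong: for the slow profile the ratio at the origin is below $2\pi(q-1)$, which is less than the limit at infinity. No parameter range rescues the slow profile.

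The missing idea is the one the paper actually uses. Since $q>1$, the super-solution inequality is not invariant under scaling. A bound $\mathcal{K}_a\ast u^q\le Cu$ therefore becomes a genuine super solution after replacing $u$ by $\lambda u$ with $\lambda=C^{-1/(q-1)}$:
\[
\mathcal{K}_a\ast(\lambda u)^q=\lambda^q\,\mathcal{K}_a\ast u^q\le\lambda^qC\,u=\lambda u .
\]
The paper does exactly this. It shows $c_1(x)u_1=\mathcal{K}_a\ast u_1^q$ with $c_1\le C_1$ for $|x|>2R$ and passes to $u_2=u_1/C_1^{1/(q-1)}$; on $|x|\le2R$ it passes to $u_3=C_3^{1/(q-1)}u_1$. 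So what is really proved is that suitable positive multiples of the two profiles are super solutions. That is also all that is used later, namely super solutions with decay rates $1$ and $2/(q-1)$. Your closing hedge points in this direction but never supplies the scaling step.

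With that one line added, your Steps 1--3 give a complete proof, and a cleaner one than the paper's. The single global constant from Newton's theorem avoids the paper's two regions with two different constants, which have to be reconciled by taking the smaller multiple. You also do not need the lower bounds the paper derives.
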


\begin{rem}
When $q>3$, there holds $1>2/(q-1)$. We call $1$ the fast
decay rate and $2/(q-1)$ the slow decay rate.
\end{rem}

\textbf{Proof of Theorem \ref{tha2}.}

\begin{proof}
Set
\begin{equation}\label{f1}
        u_1(x)=\frac{1}{(1+|x|^2)^{\theta}},
    \end{equation}
where $\theta>0$ will be determined later. Inserting (\ref{f1})
into the right hand side of (\ref{A1}).

Take $R>1$. We can find $C>1$ such that when $|x|>2R$, there holds
    $$\begin{aligned}
&\int_{\mathbb{R}^3}\frac{1-e^{-\frac{|x-y|}{a}}}{|x-y|}u_1^q(y)dy\\
\leq &\frac{C}{(1+|x|^2)^{\frac{1}{2}}}
\int_{B_R}\frac{dy}{(1+|y|^2)^{q\theta}}
+\frac{C}{(1+|x|^2)^{q\theta}}\int_{B_{\frac{|x|}{2}}(x)}
\frac{1-e^{-\frac{|x-y|}{a}}}{|x-y|}dy\\
&+C\int_{(B_{2|x|} \setminus {B_R})\setminus
B_{\frac{|x|}{2}}(x)}\frac{1-e^{-\frac{|x-y|}{a}}}
{|x-y||y|^{2q\theta}}dy
+C\int_{\mathbb{R}^3 \setminus {B_{2|x|}}}\frac{1-e^{-\frac{|x-y|}{a}}}
{|x-y||y|^{2q\theta}}dy.
    \end{aligned}
    $$
In view of $q>3$, we take $\theta =1/(q-1)$, which implies
$2<2q\theta<3$. Thus, for some double function $c_1(x)$, we obtain
   \begin{equation}\label{lang}
   \begin{aligned}
\int_{\mathbb{R}^3}\frac{1-e^{-\frac{|x-y|}{a}}}{|x-y|}u_1^q(y)dy
\leq &\frac{C}{(1+|x|^2)^{\frac{1}{2}}}
+\frac{C}{(1+|x|^2)^{q\theta-1}}\\
&+\frac{C}{|x|}\int_{R}^{2|x|}r^{3-2q\theta}\frac{dr}{r}
+C\int_{2|x|}^{\infty}r^{2-2q\theta}\frac{dr}{r}\\
\leq &\frac{C}{(1+|x|^2)^{q\theta-1}}.
    \end{aligned}
   \end{equation}

On the other hand, for $|x|>2R$,
$$
\int_{\mathbb{R}^3}\frac{1-e^{-\frac{|x-y|}{a}}}{|x-y|}u_1^q(y)dy
\geq \frac{1}{C(1+|x|^2)^{q\theta}}\int_{B_{\frac{|x|}{2}}(x)}
\frac{1-e^{-\frac{|x-y|}{a}}}{|x-y|}dy
\geq \frac{1}{C(1+|x|^2)^{q\theta-1}}.
$$
This result and \eqref{lang} imply that (\ref{f1}) is the slowly decaying radial
solution of
\begin{equation}\label{huyou}
c_1(x)u(x)=\int_{\mathbb{R}^3}\frac{1-e^{-\frac{|x-y|}{a}}}{|x-y|}u^q(y)dy
\end{equation}
when $|x|>2R$, where $c_1(x)$ is some double bounded function.

In addition, we take $\theta = 1/2$, which implies $2q\theta>3$ as
long as $q>3$. Therefore, we can find $C>1$ such that
    $$
    \begin{aligned}
\int_{\mathbb{R}^3}\frac{1-e^{-\frac{|x-y|}{a}}}{|x-y|}u_1^q(y)dy
\leq &\frac{C}{(1+|x|^2)^{\frac{1}{2}}}
+\frac{C}{(1+|x|^2)^{q\theta-1}}\\
&+\frac{C}{|x|}\int_{R}^{2|x|}r^{3-2q\theta}\frac{dr}{r}
+C\int_{2|x|}^{\infty}r^{2-2q\theta}\frac{dr}{r}\\
\leq &\frac{C}{(1+|x|^2)^{\frac{1}{2}}}.
    \end{aligned}
    $$
On the other hand,
$$
\int_{\mathbb{R}^3}\frac{1-e^{-\frac{|x-y|}{a}}}{|x-y|}u_1^q(y)dy
\geq \int_{B_R}\frac{dy}{(1+|y|^2)^{\frac{1}{2}}}
\geq \frac{1}{C(1+|x|^2)^{\frac{1}{2}}}.
$$
Combining these two results, we see that (\ref{f1}) is the fast
decaying radial solution of (\ref{huyou}) when $|x|>2R$.

For double bounded function $c_1(x)$, we can find a constant $C_1>0$
such that $c_1(x) \leq C_1$. Write $u_2=u_1/\tilde{C}$, where
${\tilde{C}}=C_1^{1/(q-1)}$. Since $u_1$ is the radial
solution of (\ref{huyou}), we see that
    $$
\tilde{C}u_2(x)=u_1(x)=
c_1^{-1}(x)\int_{\mathbb{R}^3}\mathcal{K}_a(x-y)u_1^q(y)dy \geq
{\tilde{C}}^qC_1^{-1}\int_{\mathbb{R}^3}\mathcal{K}_a(x-y)u_2^q(y)dy.
        $$
Therefore,
    $$
u_2(x) \geq
{\tilde{C}}^{q-1}C_1^{-1}\int_{\mathbb{R}^3}\mathcal{K}_a(x-y)u^q(y)dy =
\int_{\mathbb{R}^3}\mathcal{K}_a(x-y)u_2^q(y)dy.
    $$
Namely, $u_2$ is a super solution of (\ref{A1}) when $|x|>2R$.

When $|x| \leq 2R$, $u_1 \geq (1+4R^2)^{-\theta}$ with $\theta \in
\{1/2,1/(q-1)\}$.
By the same argument above, we can find a constant
$C_2>0$ such that
    $$
    \int_{\mathbb{R}^3}\frac{1-e^{-\frac{|x-y|}{a}}}{|x-y|}u_1^q(y)dy
    \leq C_2.
    $$
Therefore,
$$
u_1(x) \geq C_3 \int_{\mathbb{R}^3}
\frac{1-e^{-\frac{|x-y|}{a}}}{|x-y|}u_1^q(y)dy,
$$
where $C_3=[C_2(1+4R^2)^{\theta}]^{-1}$. Write $u_3=C_3^{1/(q-1)}u_1$.
Thus, $u_3$ is a super solution of \eqref{A1} when $|x| \leq 2R$.

Theorem \ref{tha2}
is proved.
\end{proof}

Combining Theorems \ref{tha1} and \ref{tha2}, we prove the first
conclusion in Theorem \ref{th1}.

\section{Regularity of integral equation}

In this section, we prove Theorem \ref{th11}. Let $u \in
L^{3(q-1)/2}(\mathbb{R}^3)$ be a positive solution of \eqref{A1}.

\subsection{Integrability}

\textbf{Proof of Theorem \ref{th11} (i).}

{\it Step 1.} We claim that $u \in L^s(\mathbb{R}^3)$ for all $s>3$.

For $A>0$, set
$$
u_A(x)=\begin{cases}
    u(x),&\text{ $ if\; u(x)>A \; or \; |x|>A; $ } \\
    0,&\text{ $ otherwise, $ }
\end{cases}
$$
Set $u_B(x)=u(x)-u_A(x)$. Let $\omega \in L^s(\mathbb{R}^3)$ for $s>3$
and define the liner operator
$$
(T\omega)(x)=\int_{\mathbb{R}^3}\mathcal{K}_a(x-y)u_A^{q-1}(y)\omega(y)dy.
$$
Write
$$
F(x):=\int_{\mathbb{R}^3}\mathcal{K}_a(x-y)u_B^q(y)dy.
$$
Obviously, $u$ solves the operator equation
$$
\omega =T\omega+F.
$$
Noting that $\mathcal{K}_a(x-y) \leq |x-y|^{-1}$, applying the
classical Hardy-Littlewood-Sobolev inequality
and the H\"{o}lder inequality, we obtain
\begin{equation}\label{gj}
    \begin{aligned}
\left \| T\omega  \right \|_{L^s(\mathbb{R}^3)} \leq &C\left \|
I_2(u_{A}^{q-1}\omega)  \right \|_{L^s(\mathbb{R}^3)}
\leq C\left \| u_{A}^{q-1}\omega \right \|_{L^{\frac{3s}{3+2s}}(\mathbb{R}^3)}\\
\leq &C\left \|u_{A}\right \|_{L^{\frac{3(q-1)}{2}}(\mathbb{R}^3)}^{q-1}
\left \| \omega  \right \|_{L^s(\mathbb{R}^3)} .
    \end{aligned}
\end{equation}
Here $I_2(f)$ is the Newton potential of $f$. In view of $u \in
L^{\frac{3(q-1)}{2}}(\mathbb{R}^3)$, we get
$$
C \left \| u_A  \right \|_{L^{\frac{3(q-1)}{2}}(\mathbb{R}^3)}^{q-1}
\leq \frac{1}{2},
$$
when $A>0$ is sufficiently large. Hence, $T$ is a shrinking
operator. Since $T$ is linear, it is also a contraction map from
$L^s(\mathbb{R}^3)$ to itself as long as $s>3$.

Similar to (\ref{gj}), for any $s>3$, there holds
$$
\left \| F  \right \|_{L^s(\mathbb{R}^3)}
\leq  C\left \| u_{B}^q \right \|_{L^{\frac{3s}{3+2s}}(\mathbb{R}^3)}.
$$
Combining with the definition of $u_B$, we have $F \in L^s(\mathbb{R}^3)$.
By virtue of the regularity lifting lemma (cf. Theorem 3.3.1 in
\cite{CL-Book} or Lemma 2.2 in \cite{MCL}), we deduce that $u \in
L^s(\mathbb{R}^3)$ for any $s>3$.

{\it Step 2.} We claim that $u \in L^\infty(\mathbb{R}^3)$.

In fact, Step 1 shows
    \begin{equation}\label{q3}
        u \in L^t(\mathbb{R}^3), \quad \forall \; t>3.
    \end{equation}
In view of (\ref{A1}) and $\mathcal{K}_a(x-y) \leq |x-y|^{-1}$,
there holds
    \begin{equation}\label{2.01}
        u(x)=[\mathcal{K}_a \ast u^q](x) \leq CI_2(u^q)(x).
    \end{equation}
By exchanging integral variables, we convert the Newton potential
into the Wolff type potential. That is,
    \begin{equation}\label{2.02}
I_2(u^q)(x)  =\int_{\mathbb{R}^3}\frac{u^q(y)}{|x-y|}dy =\int_{\mathbb{R}^3}
u^q(y)\int_{|x-y|}^{\infty}t^{-1}\frac{dt}{t}dy
=\int_{0}^{\infty}\left(\frac{\int_{B_t(x)}u^q(y)dy}{t}\right)\frac{dt}{t}.
            \end{equation}
    Set
    $$
I_2(u^q)(x)=\int_{0}^{1}\left(\frac{\int_{B_t(x)}u^q(y)dy}{t}\right)\frac{dt}{t}
+\int_{1}^{\infty}\left(\frac{\int_{B_t(x)}u^q(y)dy}{t}\right)\frac{dt}{t}
:=J_1+J_2.
    $$

Take $l>3/2$. According to Theorem \ref{tha1}, $q>3$.
Therefore, \eqref{q3} implies $u
\in L^{ql}(\mathbb{R}^3)$. Then by the H\"{o}lder
inequality, we have
    $$
J_1 \leq C\int_{0}^{1}\frac{1}{t} \left\|u^q
\right\|_{L^l(\mathbb{R}^3)}t^{3(1-\frac{1}{l})} \frac{dt}{t} \leq
C\int_{0}^{1}t^{2-\frac{3}{l}}\frac{dt}{t} \leq C.
        $$
For $\delta \in (0, 1)$ and $z \in B_{\delta}(x)$, we have $B_t(x)
\subset B_{t+\delta}(z)$. Therefore, by \eqref{2.02}, it follows that
    $$
    J_2 \leq
\int_{1}^{\infty}\left(\frac{\int_{B_{t+\delta}(z)}u^q(y)dy}{t+\delta}\right)
 \left(\frac{t+\delta}{t}\right)^2\frac{d(t+\delta)}{t+\delta}
\leq  2^2\int_{1+\delta}^{\infty}\frac{\int_{B_{t}(z)}u^q(y)dy}{t}\frac{dt}{t}
\leq  CI_2(u^q)(z).
        $$
Combining estimates of $J_1$ and $J_2$ with \eqref{2.01}, we get
    $$
    u(x) \leq CI_2(u^q)(x) \leq C+CI_2(u^q)(z), \quad when \ \ z \in B_{\delta}(x).
    $$
    Take $t>3$. Then
$$
    u^t(x) \leq C+C[I_2(u^q)(z)]^t, \quad when \ \ z \in B_{\delta}(x).
    $$
Integrating on $B_{\delta}(x)$ and multiplying by
$|B_{\delta}|^{-1}$, and using the classical Hardy-Littlewood-Sobolev
inequality, we obtain
    $$\begin{aligned}
u(x) =&\left(|B_{\delta}|^{-1}\int_{B_{\delta}(x)}u^t(x)dz\right)^{1/t}\\
\leq & C+ C
\left(\int_{B_{\delta}(x)}[I_2(u^q)(z)]^tdz\right)^{1/t} \\
\leq & C+C\|u\|_{L^{\frac{3qt}{3+2t}}(\mathbb{R}^3)}^q \\
\leq & C \quad \quad (by \ \ 3qt/(3+2t)>3 \ \ and  \ \ \eqref{q3}).
    \end{aligned}
    $$
    This shows that $u$ is bounded in $\mathbb{R}^3$.

{\it Step 3.}
We claim that $3$ is optimal. In fact, \eqref{chen} implies
$u(x) \geq C|x|^{-1}$ for $|x|>M$ with suitably large $M$. Therefore,
$$
\int_{\mathbb{R}^3}u^s(x)dx \geq C\int_{\mathbb{R}^3 \setminus B_M(0)}u^s(x)dx
=\infty, \quad \forall s\leq3.
$$
This completes the proof of (i) in Theorem \ref{th11}.

Moreover, by an analogous argument above,
we can deduce the following conclusion.

\begin{theorem}\label{th2.1}
Assume $u \in L^{\frac{3(q-1)}{2}}(\mathbb{R}^3)$ is a positive solution of
equation (\ref{A1}), then $u$ converges to $0$ when $|x| \to
\infty$.
\end{theorem}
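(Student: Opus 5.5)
We reuse the localization from Step 2 of the proof of Theorem \ref{th11} (i), but now split the Wolff-type representation \eqref{2.02} so that every piece is small once $|x|$ is large. By \eqref{2.01} it suffices to show $I_2(u^q)(x)\to 0$ as $|x|\to\infty$. From Step 1 and Step 2 we already know $u\in L^t(\mathbb{R}^3)$ for all $t\in(3,\infty]$. Since $q>3$ by Theorem \ref{tha1}, we get $u^q\in L^1\cap L^\infty(\mathbb{R}^3)$, and in particular $u^q\in L^l$ for every $l\ge 1$.

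Fix $\varepsilon>0$ and write, for $0<\eta<1<M$,
$$
I_2(u^q)(x)=\int_0^{\eta}\frac{\int_{B_t(x)}u^q dy}{t}\frac{dt}{t}
+\int_{\eta}^{M}\frac{\int_{B_t(x)}u^q dy}{t}\frac{dt}{t}
+\int_{M}^{\infty}\frac{\int_{B_t(x)}u^q dy}{t}\frac{dt}{t}
:=J_1+J_2+J_3.
$$
We bound each piece in turn.

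\emph{The small-scale term $J_1$.} Using $u\in L^\infty$, we have $\int_{B_t(x)}u^q\le Ct^3$. Hence $J_1\le C\eta^2$, and we choose $\eta$ so small that this is below $\varepsilon$, uniformly in $x$.

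\emph{The large-scale term $J_3$.} Here we use $\int_{B_t(x)}u^q\le\|u\|_{L^q}^q<\infty$, which gives $J_3\le \|u\|_q^q/M$. We choose $M$ large so that this is below $\varepsilon$, again uniformly in $x$.

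\emph{The middle term $J_2$.} For $t\le M$ we have $B_t(x)\subset \mathbb{R}^3\setminus B_{|x|-M}(0)$. Therefore
$$
J_2\le \eta^{-2}\int_{|y|>|x|-M}u^q(y)dy,
$$
which tends to $0$ as $|x|\to\infty$ by absolute continuity of the integral of $u^q\in L^1$. Thus $\limsup_{|x|\to\infty}u(x)\le C\varepsilon$, and letting $\varepsilon\to0$ gives the claim.

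The main subtlety is that $u$ is only an $L^{3(q-1)/2}$ function, so "$u(x)\to0$" must be read pointwise for the representative given by the integral in \eqref{A1}. The estimate above is pointwise for that representative, so this is consistent. If one prefers the mean-value formulation of Step 2 instead, the same splitting applies after averaging over $B_\delta(x)$ and using $\|I_2(u^q\chi_{\{|y|>|x|-M\}})\|_{L^t}\to0$.

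The only genuinely needed inputs are $q>3$ and the integrability/boundedness from Theorem \ref{th11} (i). With these in hand, no step should be a real obstacle.
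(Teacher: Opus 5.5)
Your proof is correct, but it takes a different and more direct route than the paper.

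The paper splits the Wolff representation only once, at a scale $\delta$:
\begin{itemize}
\item the part $t<\delta$ is $O(\delta^2)$ by boundedness of $u$;
\item the part $t>\delta$ is dominated by $C\,I_2(u^q)(x)$ for every $x\in B_\delta(x_0)$.
\end{itemize}
It then raises the resulting inequality to a power $s>3$ and averages over $B_\delta(x_0)$. The Hardy--Littlewood--Sobolev inequality gives $I_2(u^q)\in L^s(\mathbb{R}^3)$, so $\int_{B_\delta(x_0)}[I_2(u^q)]^s\,dx\to0$ as $|x_0|\to\infty$.

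You add a second cutoff $M$ and work pointwise:
\begin{itemize}
\item the large scales are controlled by the total mass $\|u\|_{L^q}^q$;
\item the intermediate scales are controlled by the tail $\int_{|y|>|x|-M}u^q\,dy$.
\end{itemize}
This uses only $u^q\in L^1\cap L^\infty(\mathbb{R}^3)$, which holds since $q>3$ and by Theorem \ref{th11} (i). It removes both the averaging step and the Hardy--Littlewood--Sobolev inequality. It also proves the slightly stronger statement that $I_2(u^q)(x)\to0$ pointwise, not just $u(x_0)\to0$ through local $L^s$ averages.

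The paper's version has its own merits. It reuses verbatim the mean-value machinery of Step 2, and the same derivation is used again for $v$ in \S7. Formally it needs only $u^q\in L^{3s/(3+2s)}$ for some large $s$, rather than $u^q\in L^1$. Here both integrability properties are available, so nothing is lost by your route. Your splitting would also adapt to $u^q\in L^p\cap L^\infty$ with $p<3/2$ by applying H\"older on $B_t(x)$.

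One small point: $\int_\eta^M t^{-2}\,dt\le\eta^{-1}$, so your factor $\eta^{-2}$ in the bound for $J_2$ is valid (since $\eta<1$) but loose.
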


\begin{proof}
According to (\ref{2.01}) and (\ref{2.02}), taking $x_0 \in \mathbb{R}^3$,
we see
    \begin{equation}\label{2.10}
u(x_0) \leq
\int_{0}^{\infty}\left(\frac{\int_{B_t(x_0)}u^q(z)dz}{t}\right)\frac{dt}{t}.
    \end{equation}

First for any $\varepsilon>0$,
there exists $\delta \in (0,1)$ such that
    \begin{equation}\label{2.12}
\int_{0}^{\delta}\left[\frac{\int_{B_t(x_0)}u^q(z)dz}{t}\right]\frac{dt}{t}
\leq \left \| u \right \| _{L^\infty(\mathbb{R}^3)}^q\int_{0}^{\delta}
t^2\frac{dt}{t} \leq C \delta^2 < \varepsilon.
    \end{equation}
    In addition, for $|x-x_0|<\delta$,
    \begin{equation}\label{2.13}
        \begin{aligned}
\int_{\delta}^{\infty}\left[\frac{\int_{B_t(x_0)}u^q(z)dz}{t}\right]\frac{dt}{t}
\leq &
\int_{\delta}^{\infty}\frac{\int_{B_{t+\delta}(x)}u^q(z)dz}{t}
\left(\frac{t+\delta}{t}\right)^2\frac{d(t+\delta)}{t+\delta}\\
\leq &
C\int_{0}^{\infty}\left[\frac{\int_{B_t(x)}u^q(z)dz}{t}\right]\frac{dt}{t}
\leq  CI_2(u^q)(x).
        \end{aligned}
    \end{equation}
Inserting (\ref{2.12}) and (\ref{2.13}) into (\ref{2.10}), we
obtain that
    \begin{equation}\label{sun}
u(x_0)<C\varepsilon+CI_2(u^q)(x), \quad for \;\; |x-x_0|<\delta.
    \end{equation}
Letting $s>3$ sufficiently large, we get
    $$
u^s(x_0)<C\varepsilon^s+C[I_2(u^q)(x)]^s,\quad for \;\;
|x-x_0|<\delta.
    $$
    Integrating on $B_{\delta}(x_0)$ and multiplying by
    $|B_\delta|^{-1}$, we have
    \begin{equation}\label{2.14}
u^s(x_0)
\leq C\varepsilon^s+C\left \| I_2(u^q)
\right \|_{L^s(B_{\delta}(x_0))}^s.
\end{equation}
In view of $s>3$, and using the classical Hardy-Littlewood-Sobolev
inequality and \eqref{q3}, we get
    $$
    \left \| I_2(u^q) \right \|_{L^s(\mathbb{R}^3)}
\leq C \| u^q\|_{L^{\frac{3s}{3+2s}}(\mathbb{R}^3)}
=C \| u \|_{L^{\frac{3sq}{3+2s}}(\mathbb{R}^3)}^q<\infty.
    $$
    This implies $ I_2(u^q)\in L^s(\mathbb{R}^3)$.
    Hence,
    $$
    \lim_{|x_0|\to \infty}\int_{B_{\delta}(x_0)}[I_2(u^q)(x)]^s dx=0.
    $$
    Combining this result and (\ref{2.14}), we deduce
    $$
    \lim_{|x_0|\to \infty}u^s(x_0)=0.
    $$
    This completes the proof of Theorem \ref{th2.1}.
\end{proof}

\subsection{Differentiability}

In this subsection, we prove Theorem \ref{th5}.

\begin{theorem}\label{th6.1}
Assume that the positive function $u \in L^{3(q-1)/2}(\mathbb{R}^3)$ solves \eqref{A1},
then $u$ is differentiable in $\mathbb{R}^3$.
\end{theorem}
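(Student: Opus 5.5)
We prove differentiability by differentiating under the integral sign in \eqref{A1}. The ingredients are the global bounds on $u$ from Theorem \ref{th11} (i) and the kernel estimates \eqref{chu} and \eqref{wang}. By Step 2 of the proof of Theorem \ref{th11} (i), $u\in L^\infty(\mathbb{R}^3)$; by Step 1, $u\in L^s(\mathbb{R}^3)$ for every $s>3$. Hence $u^q\in L^\infty(\mathbb{R}^3)\cap L^{s/q}(\mathbb{R}^3)$ for all $s>3$. Since $q>3$ (Theorem \ref{tha1}), $u^q\in L^r(\mathbb{R}^3)$ for every $r>3/q$, and $3/q<1$. The candidate derivative is
$$
\nabla u(x)=\int_{\mathbb{R}^3}\nabla\mathcal{K}_a(x-y)u^q(y)dy .
$$

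The first step is to check that this integral converges absolutely and locally uniformly in $x$. By \eqref{wang}, $|\nabla \mathcal{K}_a(z)|\le C$ for small $|z|$ and $|\nabla\mathcal{K}_a(z)|\le C|z|^{-2}$ for large $|z|$. In fact \eqref{chu} gives $|\nabla\mathcal{K}_a(z)|\le C\min\{1,|z|^{-2}\}$ for all $z$, since the bracketed factor is bounded for intermediate $|z|$. Consequently $\nabla\mathcal{K}_a\in L^{p}(\mathbb{R}^3)$ for every $p\in(3/2,\infty]$. Fix such a $p$ (say $p=2$) and take $r=p'$. Then $r<3$, and $r\ge 1>3/q$, so $u^q\in L^{r}$. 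Young/H\"older inequality then gives
$$
\int|\nabla\mathcal{K}_a(x-y)|u^q(y)dy\le \|\nabla\mathcal{K}_a\|_{L^p}\|u^q\|_{L^{p'}}<\infty,
$$
uniformly in $x$. In the same way, \eqref{shen} and $\mathcal{K}_a(z)\le C\min\{1,|z|^{-1}\}$ show that \eqref{A1} itself converges uniformly. This also makes $u$ continuous: replace $u$ by the right-hand side of \eqref{A1}, which agrees with it a.e., and use dominated convergence together with continuity of translations in $L^p$.

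The second step is the differentiation itself. For a unit vector $e$ and $0<|h|<1$, write
$$
\frac{u(x+he)-u(x)}{h}=\int \frac{\mathcal{K}_a(x+he-y)-\mathcal{K}_a(x-y)}{h}u^q(y)dy .
$$
By the mean value theorem, the difference quotient of the kernel equals $e\cdot\nabla\mathcal{K}_a$ evaluated at some point $x+\theta he-y$ with $\theta\in(0,1)$. It is therefore bounded by $\sup_{|w|\le1}|\nabla\mathcal{K}_a(x-y+w)|\le C\min\{1,(1+|x-y|)^{-2}\}$. This majorant lies in $L^p(dy)$ for $p>3/2$, so it is integrable against $u^q\in L^{p'}$. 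The only delicate point is that $\mathcal{K}_a$ must be $C^1$ across the origin for the mean value theorem to apply. This holds because $\mathcal{K}_a(z)=\frac1a-\frac{|z|}{2a^2}+O(|z|^2)$ is Lipschitz near $0$. Although $\nabla\mathcal{K}_a$ may have a direction-dependent limit at $0$, it is bounded there, and a Lipschitz function still satisfies $|\mathcal{K}_a(z+he)-\mathcal{K}_a(z)|\le |h|\sup|\nabla \mathcal{K}_a|$ along the segment. Dominated convergence, using pointwise differentiability of $\mathcal{K}_a$ away from $z=0$ (the set $\{y=x\}$ is null), gives the partial derivatives. Their continuity in $x$ follows from the first step, which yields $u\in C^1$ and hence differentiability.

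The main obstacle is the behaviour of $\nabla\mathcal{K}_a$ near the origin, namely whether \eqref{wang} truly gives boundedness there. I would verify it by a Taylor expansion of $(1-e^{-r/a})/r$ at $r=0$: the radial derivative tends to $-1/(2a^2)$, so $|\nabla\mathcal{K}_a|$ is bounded. The rest is routine dominated convergence, and the required integrability of $u^q$ is exactly what Theorem \ref{th11} (i) together with $q>3$ provides.
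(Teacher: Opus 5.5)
Your argument is correct, but it takes a different and shorter route than the paper.

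The paper adapts the Newtonian-potential argument of Gilbarg--Trudinger \S4.2, in four steps:
\begin{enumerate}
\item It splits $u=U+V$ into the contributions of $B_R$ and of $\mathbb{R}^3\setminus B_R$.
\item It replaces the kernel by $\mathcal{K}_a(x-y)\zeta_\varepsilon(|x-y|)$, which cuts out a neighbourhood of the diagonal.
\item It differentiates these regularized potentials under the integral sign, with majorant $Cu^q(y)$.
\item It shows that $U_\varepsilon\to U$ and $\nabla U_\varepsilon\to w_1$ uniformly as $\varepsilon\to 0$, and likewise for $V$.
\end{enumerate}

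You instead use the fact that $\mathcal{K}_a$ is globally Lipschitz with $|\nabla\mathcal{K}_a(z)|\le C(1+|z|)^{-2}$. The difference quotients are then dominated, uniformly in $|h|<1$, by $C(1+|x-y|)^{-2}u^q(y)$. This majorant is integrable by H\"older, since $(1+|z|)^{-2}\in L^2$ and $u^q\in L^2$. A single dominated-convergence step gives the derivative formula. Continuity of $L^2*L^2$ convolutions then upgrades this to $u\in C^1$, together with the global bound $\|\nabla u\|_{L^\infty}\le \|\nabla\mathcal{K}_a\|_{L^2}\|u\|_{L^{2q}}^q$.

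The cut-off regularization is built for kernels with unbounded gradient, such as $|z|^{-1}$. It is unnecessary for the modified Coulomb kernel, whose gradient stays bounded at the origin; the paper's own estimate $|\nabla_x\mathcal{K}_a|+2|\mathcal{K}_a|\le C$ already shows this. The paper's scheme is more robust if one moves to more singular kernels, while yours is more economical and yields more here ($C^1$ plus a uniform gradient bound).

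One wording fix. $\mathcal{K}_a$ is not $C^1$ across the origin, because $\nabla\mathcal{K}_a(z)\to -\frac{1}{2a^2}\frac{z}{|z|}$ depends on the direction of approach. So the pointwise mean-value representation can fail when the segment from $x-y$ to $x+he-y$ passes through $0$. For fixed $x$ and $h$ this happens only for $y$ in a null set, or you can split the segment at the origin. Either way the Lipschitz estimate you actually use holds, exactly as your remark indicates.
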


\begin{proof}
According to Theorem \ref{tha1}, we know $q>3$.

{\it Step 1.}
Take $R>2$ suitably large.
We claim that
$$
        U(x):=\int_{B_{R}}\mathcal{K}_a(x-y) u^q(y)dy
        $$
        is differentiable.

In fact, an analogous argument in \S4.2 of \cite{GT} is used here. Set
    	$$
    	w_1(x):=\int_{B_R}\nabla_x \mathcal{K}_a(x-y)u^q(y)dy.
    	$$
    By\eqref{chu}, \eqref{wang} and Theorem \ref{th11} (i), we can find $\delta
\in (0,1/2)$ suitably small such that the right hand side is finite
    when $x$ belongs to $\mathbb{R}^3 \setminus B_{2R}$,
    $B_{2R} \setminus B_\delta(y)$ and $B_\delta(y)$ respectively.
    Thus, $w_1$ is well defined in $\mathbb{R}^3$.

    	Take a cut-off function $\zeta \in C^1(\mathbb{R})$ satisfying
    $0 \leq \zeta \leq 1$, $0\leq \zeta' \leq 2$, $\zeta (t)=0$ for $t\leq 1$,
    and $\zeta(t)=1$ for $t\geq 2$. For $\varepsilon>0$, write
    	$$
    	U_{\varepsilon}(x)
    :=\int_{B_R}\mathcal{K}_a(x-y)\zeta_{\varepsilon}(|x-y|)u^q(y)dy.
    	$$
    	Here $\zeta_{\varepsilon}(|x-y|)=\zeta({|x-y|}/{\varepsilon})$ and
    $U_{\varepsilon}(x)$ is well defined because it is bounded by $u(x)$.
        	In addition,
    \begin{equation}\label{952}
    		\begin{aligned}
    			&|\nabla_x[\mathcal{K}_a(x-y)\zeta_{\varepsilon}(|x-y|)]u^q(y)|\\
    		=&|[\nabla_x\mathcal{K}_a(x-y)] \zeta_{\varepsilon}(|x-y|)u^q(y)|
    +|[\nabla_x\zeta_{\varepsilon}(|x-y|)] \mathcal{K}_a(x-y)u^q(y)|\\
    		\leq &(|\nabla_x\mathcal{K}_a(x-y)|+2| \mathcal{K}_a(x-y)|)|u^q(y)|.
    		\end{aligned}
    	\end{equation}
    Let $y \in B_R$. By \eqref{shen}, \eqref{chu} and \eqref{wang},
    we can find an absolute constant $C>0$ such that
    $$
    |\nabla_x\mathcal{K}_a(x-y)|+2| \mathcal{K}_a(x-y)| \leq C
    $$
    when $x$ belongs to $\mathbb{R}^3 \setminus B_{2R}$,
    $B_{2R} \setminus B_\delta(y)$ and $B_\delta(y)$ respectively.
    Inserting this result into \eqref{952} and using \eqref{q3}, we get
    	$$
    		|\nabla_x[\mathcal{K}_a(x-y)\zeta_{\varepsilon}(|x-y|)]u^q(y)|
    		\leq  Cu^q(y) \in L^1(B_R).
    		$$
Thus,
$\nabla_x [\mathcal{K}_a(x-y)\zeta_{\varepsilon}(|x-y|)]u^q(y)$
can be dominated by an integrable function.
Therefore, we can still use the differentiation theorem
for improper integrals with parametric variables (cf. Theorem 3.16 in \cite{Ben})
to see that
$U_{\varepsilon}(x)$ is differentiable in $\mathbb{R}^3$ and
    	$$
    	\nabla U_{\varepsilon}(x)
    =\int_{B_R}\nabla_x[\mathcal{K}_a(x-y)\zeta_{\varepsilon}(|x-y|)]u^q(y)dy.
    	$$
    	Clearly, there hold
    $$
    U(x)-U_{\varepsilon}(x)
    =\int_{|x-y|\leq 2\varepsilon}(1-\zeta_{\varepsilon}) \mathcal{K}_a(x-y)u^q(y)dy,
    	$$
    and
    	$$
    	w_1(x)-\nabla U_{\varepsilon}(x)
    =\int_{|x-y|\leq 2\varepsilon}\nabla_x[(1-\zeta_{\varepsilon})
    \mathcal{K}_a(x-y)]u^q(y)dy.
    	$$
    	Noting
    $$
|U(x)- U_{\varepsilon}(x)| \leq C\|u\|_{L^\infty(B_R)}^p |B_{2\varepsilon}|,
    $$
which is implied by \eqref{shen},    and
    	\begin{equation*}
    		\begin{aligned}
    			|w_1(x)-\nabla U_{\varepsilon}(x)|
    			\leq &\int_{|x-y|\leq 2\varepsilon}(\frac{C}{\varepsilon}|\mathcal{K}_a(x-y)|
    +|\nabla_x\mathcal{K}_a(x-y)|)u^q(y)dy\\
    			\leq &\frac{C}{\varepsilon}\int_{|x-y|\leq 2\varepsilon}\frac{1}{|x-y|}u^q(y)dy
    +C\int_{|x-y|\leq 2\varepsilon}u^q(y)dy \quad (by \ \ \eqref{wang})\\
    			\leq &C\|u\|_{L^\infty(B_R)}^q(\varepsilon+|B_{2\varepsilon}|),
    		\end{aligned}
    	\end{equation*}
        we know that $U_{\varepsilon}$ and $\nabla U_{\varepsilon}$
        converge uniformly to $U$ and $w_1$ respectively as $\varepsilon \to 0$.
        Hence, the claim holds and
       \begin{equation}\label{954}
       	\nabla U(x)=\int_{B_R}\nabla_x \mathcal{K}_a(x-y)u^q(y)dy.
       \end{equation}

{\it Step 2.} Write
$$
        V(x):=\int_{\mathbb{R}^3\setminus B_R}\mathcal{K}_a(x-y) u^q(y)dy,
        \quad
V_\varepsilon(x):=\int_{\mathbb{R}^3\setminus B_R}
\mathcal{K}_a(x-y)\zeta_{\varepsilon}(|x-y|)u^q(y)dy,
    	$$
    and
$$
    	w_2(x):=\int_{\mathbb{R}^3\setminus B_R}\nabla_x \mathcal{K}_a(x-y)u^q(y)dy.
    	$$
    Clearly, $V(x)$ and $V_\varepsilon(x)$ are bounded by $u(x)$. In addition,
\eqref{chu}, \eqref{wang} and Theorem \ref{th11} (i) show that $w_2$ is finite
    when $x$ belongs to $(\mathbb{R}^3 \setminus B_{R/2}) \setminus B_\delta(y)$,
    $B_{R/2}$ and $B_\delta(y)$ respectively.
    Thus, $V(x)$, $V_\varepsilon(x)$ and $w_1$ are well defined in $\mathbb{R}^3$.

Let $y \in \mathbb{R}^3\setminus B_R$. By \eqref{chu}, \eqref{wang}
and Theorem \ref{th11} (i),
we can find an absolute constant $C>0$ such that
$$
            |\nabla_x [\mathcal{K}_a(x-y)\zeta_{\varepsilon}(|x-y|)]u^q(y)|
            \leq Cu^q(y)
            $$
when $x$ belongs to $(\mathbb{R}^3 \setminus B_{R/2}) \setminus B_\delta(y)$,
    $B_{R/2}$ and $B_\delta(y)$ respectively.
    Therefore, we can use the differentiation theorem
for improper integrals with parametric variables
(cf. Theorem 3.16 in \cite{Ben}) to
see that $V_\varepsilon(x)$ is differentiable in $\mathbb{R}^3$, and
        $$
        \nabla V_\varepsilon(x)=\int_{\mathbb{R}^3\setminus B_R}\nabla_x
        [\mathcal{K}_a(x-y)\zeta_\varepsilon(|x-y|)]u^q(y)dy.
        $$
Similarly to Step 1, we also see that $V_\varepsilon$ and $\nabla V_\varepsilon$
converge uniformly to $V$ and $w_2$ respectively as $\varepsilon \to 0$.
This shows that $V$ is also differentiable.

        Since $u$ solves \eqref{A1}, we have $u=U+V$.
        Therefore, Steps 1 and 2 show that $u$ is also differentiable
        in $\mathbb{R}^3$.
\end{proof}

\begin{rem}\label{lem4.1}
Since $u \in \mathcal{D}$ is a positive distribution solution of
(\ref{A2}), Remark \ref{rem2.3} shows that $u$ belongs to
$L^{3(q-1)/2}(\mathbb{R}^3)$ and solves \eqref{A1}.
In view of Theorems \ref{th8.1} and \ref{th6.1}, we see
that Theorem \ref{th5} is true.
\end{rem}

\subsection{Radial symmetry}

In this subsection, we prove conclusion (ii) in Theorem \ref{th11}.
According to Theorem \ref{th6.1}, $u$ is continuous.
We here use the method of moving planes in integral form
which was introduced in \cite{CLOO,CLO}.

For a given real number $\lambda$, define
$$
\Sigma_{\lambda}=\left\lbrace  x=(x_1,x_2,x_3)|x_1>\lambda \right\rbrace.
$$
Write $x^{\lambda}=(2\lambda-x_1,x_2,x_3)$ and $
u_{\lambda}(x)=u(x^{\lambda})$.

First, we prove the following lemma.
\begin{lemma}\label{lem}
    Assume that $u$ solves (\ref{A1}). Then
    \begin{equation}\label{lem1}
        u(x)-u_{\lambda}(x)
        =\int_{\Sigma_{\lambda}}[\mathcal{K}_a(x-y)
        -\mathcal{K}_a(x^{\lambda}-y)][u^q(y)-u_{\lambda}^q(y)]dy.
    \end{equation}
\end{lemma}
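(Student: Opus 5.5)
The plan is the standard reflection computation behind the method of moving planes in integral form. It uses only three facts. First, $\mathcal{K}_a(x-y)$ depends only on $|x-y|$. Second, the reflection $y\mapsto y^\lambda$ is an isometry of $\mathbb{R}^3$ with Jacobian of absolute value one. Third, it maps $\Sigma_\lambda$ onto its complement $\mathbb{R}^3\setminus\overline{\Sigma_\lambda}$; the hyperplane $\{x_1=\lambda\}$ has measure zero and can be ignored. Throughout I use that the integrals converge absolutely, which holds for the solutions under consideration by the estimates of Theorem \ref{th2.*2} and Theorem \ref{th11} (i).

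First I split the integral in \eqref{A1} into the parts over $\Sigma_\lambda$ and over its complement. In the complementary part I substitute $y=z^\lambda$ with $z\in\Sigma_\lambda$. Since $|x-z^\lambda|=|x^\lambda-z|$, this gives
\[
u(x)=\int_{\Sigma_\lambda}\mathcal{K}_a(x-y)u^q(y)dy+\int_{\Sigma_\lambda}\mathcal{K}_a(x^\lambda-y)u_\lambda^q(y)dy .
\]
Next I write the same identity at the point $x^\lambda$. Using $(x^\lambda)^\lambda=x$ and $|x^\lambda-y|=|x-y^\lambda|$, it reads
\[
u_\lambda(x)=\int_{\Sigma_\lambda}\mathcal{K}_a(x^\lambda-y)u^q(y)dy+\int_{\Sigma_\lambda}\mathcal{K}_a(x-y)u_\lambda^q(y)dy .
\]

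Subtracting the second identity from the first and grouping the terms, I get
\[
u(x)-u_\lambda(x)=\int_{\Sigma_\lambda}\mathcal{K}_a(x-y)\,[u^q-u_\lambda^q](y)\,dy-\int_{\Sigma_\lambda}\mathcal{K}_a(x^\lambda-y)\,[u^q-u_\lambda^q](y)\,dy,
\]
which is exactly \eqref{lem1}.

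The only delicate point is justifying the subtraction and regrouping, since both sides must be finite. I would handle it by noting that each of the four integrals is bounded by $\int_{\mathbb{R}^3}\mathcal{K}_a(\cdot-y)u^q(y)dy$ evaluated at $x$ or at $x^\lambda$, that is, by $u(x)$ or $u_\lambda(x)$. These are finite, and indeed bounded by Theorem \ref{th11} (i).
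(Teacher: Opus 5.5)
Your proposal is correct and follows the paper's proof essentially verbatim. You split \eqref{A1} over $\Sigma_\lambda$ and its complement, reflect the complementary part using $|x-y^\lambda|=|x^\lambda-y|$, write the same identity at $x^\lambda$, and subtract. Your extra remark justifies the subtraction, which the paper leaves implicit: all integrands are nonnegative, and each of the four integrals is dominated by $u(x)$ or $u(x^\lambda)$ (this needs only that $u$ is finite at those points, not the full strength of Theorem \ref{th11} (i)).
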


\begin{proof}
In view of $|x-y^{\lambda}|=|x^{\lambda}-y|$,
there holds
    $$
    \begin{aligned}
u(x)=&\int_{\Sigma_{\lambda}}\mathcal{K}_a(x-y)u^q(y)dy
+\int_{\Sigma_{\lambda}^c}\mathcal{K}_a(x-y)u^q(y)dy\\
=&\int_{\Sigma_{\lambda}}\mathcal{K}_a(x-y)u^q(y)dy
+\int_{\Sigma_{\lambda}}\mathcal{K}_a(x-y^{\lambda})u^q(y^{\lambda})dy\\
=&\int_{\Sigma_{\lambda}}\mathcal{K}_a(x-y)u^q(y)dy
+\int_{\Sigma_{\lambda}}\mathcal{K}_a(x^{\lambda}-y)u_{\lambda}^q(y)dy.
    \end{aligned}
    $$
    Replacing $x^{\lambda}$ with $x$, we get
    $$
u_{\lambda}(x)=\int_{\Sigma_{\lambda}}\mathcal{K}_a(x^{\lambda}-y)u^q(y)dy
+\int_{\Sigma_{\lambda}}\mathcal{K}_a(x-y)u_{\lambda}^q(y)dy.
    $$
    Therefore,
    $$
u(x)-u_{\lambda}(x)
=\int_{\Sigma_{\lambda}}[\mathcal{K}_a(x-y)
-\mathcal{K}_a(x^{\lambda}-y)][u^q(y)-u_{\lambda}^q(y)]dy.
  $$
Lemma \ref{lem} is proved.
\end{proof}

Next, we prove the (ii) in Theorem \ref{th11}.

\begin{proof}
Write $\Sigma_{\lambda}^-=\left\lbrace  x \in
\Sigma_{\lambda}; u(x)<u_{\lambda}(x) \right\rbrace$.

{\it{Step 1.}} We claim that for sufficiently small $\lambda$,
$\Sigma_{\lambda}^-$ is an empty set.

According to Lemma {\ref{lem}} and the mean value theorem,
we deduce that for any $x\in \Sigma_{\lambda}^-$,
$$
u_{\lambda}(x)-u(x) \leq
\int_{\Sigma_{\lambda}^-}\mathcal{K}_a(x-y)[u_{\lambda}^q(y)-u^q(y)]dy
\leq  q
\int_{\Sigma_{\lambda}^-}\mathcal{K}_a(x-y)[u_{\lambda}^{q-1}(u_{\lambda}-u)](y)dy.
$$
Then, applying the classical Hardy-Littlewood-Sobolev inequality and the
H\"{o}lder inequality, we obtain
\begin{equation}\label{q1}
    \begin{aligned}
&\left \|  u_{\lambda}(x)-u(x) \right \|_{L^s(\Sigma_{\lambda}^-)}\\
\leq & q\left \|  \mathcal{K}_a\ast
[u_{\lambda}^{q-1}(u_{\lambda}-u)] \right
\|_{L^s(\Sigma_{\lambda}^-)}
\leq  C\left \|  I_2 [u_{\lambda}^{q-1}(u_{\lambda}-u)] \right
\|_{L^s(\Sigma_{\lambda}^-)}\\
\leq & C\left \| u_{\lambda}^{q-1}(u_{\lambda}-u) \right
\|_{L^{\frac{3s}{3+2s}}(\Sigma_{\lambda}^-)}
\leq  C\left \| u_{\lambda} \right \|_{L^{\frac{3(q-1)}{2}}
(\Sigma_{\lambda}^-)}^{q-1}
\left \|u_{\lambda}-u \right \|_{L^s(\Sigma_{\lambda}^-)} \\
\leq & C\left \| u_{\lambda} \right
\|_{L^{\frac{3(q-1)}{2}}(\Sigma_{\lambda})}^{q-1} \left
\|u_{\lambda}-u \right \|_{L^s(\Sigma_{\lambda}^-)} =  C\left \| u
\right \|_{L^{\frac{3(q-1)}{2}}(\Sigma_{\lambda}^c)}^{q-1}
    \left \|u_{\lambda}-u \right \|_{L^s(\Sigma_{\lambda}^-)} .
    \end{aligned}
\end{equation}

By virtue of $u \in L^{\frac{3(q-1)}{2}}(\mathbb{R}^3)$, we can choose
$R>0$ suitably large such that when $\lambda<-R$,
$$
C\left \| u \right
\|_{L^{\frac{3(q-1)}{2}}(\Sigma_{\lambda}^c)}^{q-1} \leq
\frac{1}{2}.
$$
Combining with \eqref{q1}, we see that
\begin{equation}\label{equ0}
    \left \|u_{\lambda}-u \right \|_{L^s(\Sigma_{\lambda}^-)}=0.
\end{equation}
Then $\Sigma_{\lambda}^-$ is an empty set. Namely,
\begin{equation}\label{qq}
u(x) \geq u_{\lambda}(x), \quad \forall x \in \Sigma_{\lambda}.
\end{equation}

{\it{Step 2.}} According to Step 1, we start move the plane
$T_{\lambda}=\left\lbrace x \in \mathbb{R}^3; x_1=\lambda \right\rbrace$
from the negative infinity of the $x_1$-direction to the right as
long as (\ref{qq}) holds.

Write
$$
\lambda_0=\sup \left\lbrace \lambda ;u(x) \geq u_{\mu}(x),
\quad \forall x \in \Sigma_{\mu}, \quad \mu \leq \lambda \right\rbrace.
$$
If $\lambda_0<0$, we claim that $u$ must be symmetric about the
plane $T_{\lambda_0}$, that is,
$$
u \equiv u_{\lambda_0}, \quad \forall x \in \Sigma_{\lambda_0}.
$$
Otherwise, we suppose that
$$
u(x) \geq u_{\lambda_0}(x) \quad and \quad
u(x) \not\equiv u_{\lambda_0}(x),
\quad x \in \Sigma_{\lambda_0}.
$$
We claim that the plane can be moved further to the right.
More precisely, there exists $\varepsilon >0$ such that
\begin{equation}\label{10}
    u(x) \geq u_\lambda(x), \quad \forall  \; x \in \varSigma_\lambda,
    \;\;\forall  \; \lambda \in [\lambda_0,\lambda_0+\varepsilon).
\end{equation}
This would contradict with the definition of $\lambda_0$.

In the case of $u(x) \neq u_{\lambda_0}(x)$ on
$\Sigma_{\lambda_0}$. According to (\ref{lem1}), we see that
$u(x)>u_{\lambda_0}(x)$ in the interior of $\Sigma_{\lambda_0}$.
Set
$$
\overline{\Sigma_{\lambda_0}^-}= \left\lbrace x\in
\Sigma_{\lambda_0} | u(x)\leq u_{\lambda_0}(x)\right\rbrace.
$$
Clearly, $\overline{\Sigma_{\lambda_0}^-}$ is an empty set and
$\lim_{\lambda \to \lambda_0}\Sigma_{\lambda}^- \subset
\overline{\Sigma_{\lambda_0}^-}$. Let $(\Sigma_{\lambda}^-)^*$ be
the reflection of a set  $\Sigma_{\lambda}^-$ about the plane
$x_1=\lambda$.

   By the same derivation of (\ref{q1}), there also holds
   $$
\left\|u_\lambda -u\right\|_{L^s(\Sigma_{\lambda}^-)} \leq
C\left\| u
\right\|_{L^{\frac{3(q-1)}{2}}(\Sigma_{\lambda}^-)^*}^{q-1}
\left\| u_\lambda -u \right\|_{L^s(\Sigma_{\lambda}^-)}.
   $$
The integrability condition $u \in L^{\frac{3(q-1)}{2}}(\mathbb{R}^3)$
guarantees that one can choose $\varepsilon$ sufficiently small
such that for all $\lambda \in [\lambda_0,\lambda_0+\varepsilon)$,
   $$
C\left\|
u\right\|_{L^{\frac{3(q-1)}{2}}(\Sigma_{\lambda}^-)^*}^{q-1} \leq
\frac{1}{2}.
   $$
In view of the above result, we see $\left\|u_\lambda
-u\right\|_{L^s(\Sigma_{\lambda}^-)} =0$. Hence, similar to the
derivation of (\ref{equ0}), we have that $\Sigma_{\lambda}^-$ must
be measure zero. Therefore, \eqref{10} holds and the contradiction
appears.

During the process of moving the plane from left to right along
$x_1$-direction, the plane either stops at some $\lambda<0$
or can be moved until $\lambda=0$. In the former case, we have
$u_{\lambda}(x)=u(x)$ for all $x\in \Sigma_{\lambda}$ by the
above argument. In the latter case, we obtain $u_{\lambda}(x)
\leq u(x)$ for all $x\in \Sigma_0$. At this point, we move the
plane from right to left along $x_1$-direction and notice that
$u_{\lambda}(x) \geq u(x)$ for all $x\in \Sigma_0$. Namely,
$u_{\lambda}(x)=u(x)$ for all $x\in \Sigma_0$. Since
$x_1$-direction can be chosen arbitrarily, we deduce that $u$ must
be radially symmetric and monotone decreasing about some point
$x_0 \in \mathbb{R}^3$. This completes the proof of (ii) in Theorem
\ref{th11}.
\end{proof}

\subsection{Decay rates}

In view of Theorem \ref{th2.1}, we are interested in the decay
rate of $u$ when $|x| \to \infty$.
In this subsection, we prove the conclusion (iii) in Theorem
\ref{th11}.

\begin{proof}
First we consider the lower bound estimation of $u$. In fact, for
sufficiently large $|x|$, it follows $|x|/2
\leq |x|-|y|\leq |x-y| \leq |x|+|y| \leq 2|x|$ when $y\in
B_{\delta}(0)$ with small $\delta \in (0,1/2)$.
In addition, $\mathcal{K}_a$ is decreasing.
Therefore, $\mathcal{K}_a(x-y) \geq
C\mathcal{K}_a(x)$ for some constant $C>0$. Thus,
    $$
u(x) \geq
\int_{B_{\delta}(0)}\frac{1-e^{-\frac{|x-y|}{a}}}{|x-y|}u^q(y)dy
\geq C\mathcal{K}_a(x)\int_{B_{\delta}(0)}u^q(y)dy \geq
C\mathcal{K}_a(x).
        $$

Next, we focus on the upper bound estimation of $u$.

    For fixed $R>0$, write
    $$
    L_1=\int_{B_R }\mathcal{K}_a(x-y)u^q(y)dy.
    $$
    Clearly,
    $$
\lim_{|x|\to
\infty}\left|\frac{\mathcal{K}_a(x-y)}{\mathcal{K}_a(x)}-1\right|=0
    $$
for all $y \in B_R$. This implies that for sufficiently large
$|x|$,
    \begin{equation}\label{L1}
        L_1 \leq 2\mathcal{K}_a(x)\int_{B_R }u^q(y)dy.
    \end{equation}

    Next, we write
    $$
L_2=\int_{(\mathbb{R}^3 \setminus {B_R})\setminus
B_{(1-2^{-m})|x|}(x)}\mathcal{K}_a(x-y)u^q(y)dy.
    $$
Here $m=m(x)>1$ will be determined later.
Obviously, when $y \in (\mathbb{R}^3 \setminus
{B_R})\setminus B_{(1-2^{-m})|x|}(x)$, we see that $|x-y|\geq
(1-2^{-m})|x|$. Since $\mathcal{K}_a$ is decreasing, there
holds
    \begin{equation}\label{L2}
L_2 \leq \mathcal{K}_a((1-2^{-m})x)\int_{{\mathbb{R}^3 \setminus
{B_R}}}u^q(y)dy.
    \end{equation}

    Finally, we write
    $$
    L_3=\int_{B_{(1-2^{-m})|x|}(x)}\mathcal{K}_a(x-y)u^q(y)dy.
    $$
According to (ii) in Theorem (\ref{th11}), we know that $u$ is
radially symmetric and decreasing about $x_0 \in \mathbb{R}^3$. Therefore,
for large $|x|$, wan can assume $x_0=0$. Of course, the
translation invariance of \eqref{A1} shows that there is a radial
solution centered at the origin in this class of radial solutions.
Thus,
    \begin{equation}\label{L3}
L_3 \leq
u^q(\frac{x}{2^m})\int_{B_{(1-2^{-m})|x|}(x)}\mathcal{K}_a(x-y)dy
\leq  u^q(\frac{x}{2^m})\int_{B_{|x|}(x)}{\frac{dy}{|x-y|}} \leq
Cu^q(\frac{x}{2^m})|x|^2,
            \end{equation}
where $C>0$ is independent of $m$.

Since \eqref{q3} implies $u \in L^s(\mathbb{R}^3)$ for $s \in (3,q)$, we deduce that
    $$
u^s(\frac{x}{2^m})\frac{|x|^3}{8^m} \leq C\int_{B_{|x|/2^{m}}(0)}u^s(y)dy
\leq C\|u\|_{L^s(\mathbb{R}^3)}^s.
    $$
    This implies
    \begin{equation}\label{L33}
        u^q(\frac{x}{2^m}) \leq C8^{qm/s}|x|^{-3q/s},
    \end{equation}
where $C>0$ is independent of $m$. Inserting (\ref{L33}) into
(\ref{L3}) yields
$$
L_3 \leq c_\ast 8^{qm/s}|x|^{2-\frac{3q}{s}},
$$
where the positive constant $c_*$ is independent of $m$. Let
$$
m(x)=1+\frac{s}{q\log 8}\log [c_*^{-1}\mathcal{K}_a(x)|x|^{3q/s-2}].
$$
Therefore, when $|x|$ is sufficiently large,
    \begin{equation}\label{L333}
        L_3 \leq 8^{q/s}\mathcal{K}_a(x),
    \end{equation}
and in view of $s \in (3,q)$, there holds
\begin{equation}\label{liu}
\lim_{|x| \to \infty}m(x) = \infty.
\end{equation}

Combining (\ref{L1}), (\ref{L2}) and (\ref{L333}), we get
    $$\begin{aligned}
        u(x) = &L_1+L_2+L_3\\
        \leq &\left(2\int_{B_R }u^q(y)dy+8^{q/s}\right)\mathcal{K}_a(x)
+\mathcal{K}_a((1-2^{-m})x)\int_{{\mathbb{R}^3 \setminus {B_R}}}u^q(y)dy
    \end{aligned}
    $$
for sufficiently large $|x|$. Therefore, by (\ref{liu}) it follows
$$
\lim_{|x| \to \infty}\frac{u(x)}{\mathcal{K}_a(x)}
\leq (2\|u\|_{L^q(\mathbb{R}^3)}^q+8^{q/s})+\|u\|_{L^q(\mathbb{R}^3)}^q
\lim_{|x| \to \infty}\frac{\mathcal{K}_a((1-2^{-m})x)}{\mathcal{K}_a(x)}
\leq C.
$$
This implies
    $$
    u(x) \leq C\mathcal{K}_a(x)
    $$
    when $|x|$ is sufficiently large.
    Thus, we prove (iii) in Theorem \ref{th11}.
\end{proof}

\section{Proof of equivalence is complete}

In this section, we prove Theorem \ref{theq} (ii).

\begin{theorem}\label{thD}
        Assume that the positive function $u\in L^{3(q-1)/2}(\mathbb{R}^3)$ solves (\ref{A1}).
    Then $u \in \mathcal{D} $ solves (\ref{A2}) in distribution sense.
    \end{theorem}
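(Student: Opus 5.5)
Set $f:=u^q$, so that $u=\mathcal{K}_a\ast f$ by \eqref{A1}. The plan is to obtain the distributional identity from \eqref{peng} by a duality argument, and then to prove membership in $\mathcal{D}$ through the kernel estimates \eqref{chu}, \eqref{wang} and \eqref{feng}.

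\textbf{Integrability of $f$.} By Theorem \ref{tha1} we have $q>3$. By Theorem \ref{th11} (i), $u\in L^s(\mathbb{R}^3)$ for every $s\in(3,\infty]$, and $u\le C\mathcal{K}_a(x)\le C|x|^{-1}$ for large $|x|$ by Theorem \ref{th11} (iii). It follows that $f\in L^1\cap L^\infty(\mathbb{R}^3)$ with $f(x)\le C|x|^{-q}$ at infinity.

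\textbf{Distributional equation.} For $\phi\in C_0^\infty(\mathbb{R}^3)$ I compute $\int u(-\Delta+a^2\Delta^2)\phi\,dx=\int f(y)\,\big[\mathcal{K}_a\ast(-\Delta+a^2\Delta^2)\phi\big](y)\,dy$. Fubini is justified because $\mathcal{K}_a\le |x|^{-1}$, $f\in L^1\cap L^\infty$ and $\phi$ has compact support. By \eqref{peng}, $\mathcal{K}_a\ast(-\Delta+a^2\Delta^2)\phi=4\pi\phi$. This gives \eqref{weak} up to the normalizing constant $4\pi$, which the paper absorbs; I would note this and rescale $u$ by $(4\pi)^{1/(q-1)}$ if needed.

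\textbf{$u\in\mathcal{D}$.} By the argument of Lemma 3.3 in \cite{PS}, as used in \eqref{kong}, we have $\nabla u=(\nabla\mathcal{K}_a)\ast f$ and $\Delta u=(\Delta\mathcal{K}_a)\ast f$ a.e. For the Laplacian, $|\Delta\mathcal{K}_a|=e^{-|x|/a}/(a^2|x|)\in L^1(\mathbb{R}^3)$, so Young's inequality gives $\|\Delta u\|_{L^2}\le\|\Delta\mathcal{K}_a\|_{L^1}\|f\|_{L^2}<\infty$. For the gradient, \eqref{wang} gives $|\nabla\mathcal{K}_a(x)|\le C\min\{1,|x|^{-2}\}\le C|x|^{-2}$. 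Hence $|\nabla u|\le C I_1(f)$, where $I_1$ is the Riesz potential of order $1$. The Hardy--Littlewood--Sobolev inequality then yields $\|\nabla u\|_{L^2}\le C\|f\|_{L^{6/5}}$, which is finite since $f\in L^1\cap L^\infty$. Together with $u\in L^6$ (take $s=6$ in Theorem \ref{th11} (i)), this shows $u\in D^{1,2}(\mathbb{R}^3)$ with $\Delta u\in L^2(\mathbb{R}^3)$.

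\textbf{Main obstacle.} The step I expect to be hardest is showing that $u$ lies in the completion $\mathcal{D}$, rather than merely having finite $\mathcal{D}$-norm. I would handle it by cutoff approximation. Set $u_R=\zeta_R u$ and mollify; then show $\|u-u_R\|_{\mathcal{D}}\to0$. The error terms are of the form $R^{-1}\|u\|_{L^2(B_{2R}\setminus B_R)}$ and $R^{-2}\|u\|_{L^2(B_{2R}\setminus B_R)}$, together with the tails of $\nabla u$ and $\Delta u$. Using H\"older with $u\in L^6$, these are bounded by $\|u\|_{L^6(|x|>R)}$ plus tails of $L^2$ functions, all of which tend to $0$. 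A mollification then puts $u$ in the closure of $C_0^\infty(\mathbb{R}^3)$, completing the proof.
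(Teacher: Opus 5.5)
Your proposal is correct, and for the membership $u\in\mathcal{D}$ it takes a genuinely different and lighter route than the paper.

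The paper invokes Lemma 3.3 of \cite{PS} both for the distributional equation and for the formulas $\nabla u=(\nabla\mathcal{K}_a)\ast u^q$, $\Delta u=(\Delta\mathcal{K}_a)\ast u^q$. It then proves $\nabla u,\Delta u\in L^2(\mathbb{R}^3)$ by pointwise estimates region by region (small, bounded and large $|x|$), arriving at $|\nabla u|\le C|x|^{-2}$ and $|\Delta u|\le C|x|^{-q}$ at infinity. Those estimates use the decay bound $u\le C\mathcal{K}_a$ of Theorem \ref{th11} (iii), and hence the moving-plane result (ii).

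You verify the equation directly by Fubini and \eqref{peng}, and you obtain the $L^2$ bounds globally. For $\Delta u$ you use Young's inequality with $\|\Delta\mathcal{K}_a\|_{L^1}=4\pi$. For $\nabla u$ you use $|\nabla\mathcal{K}_a(x)|\le|x|^{-2}$ together with the Hardy--Littlewood--Sobolev bound $I_1:L^{6/5}\to L^2$. This needs only $u^q\in L^{6/5}\cap L^2$, i.e.\ Theorem \ref{th11} (i) and $q>3$, so the decay estimate you quote is never actually used. What the paper's route gives in exchange is pointwise decay of $\nabla u$ and $\Delta u$, which the statement does not require.

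Your remark on the constant is also right. Since \eqref{peng} carries $4\pi\delta_0$, a solution of \eqref{A1} satisfies $-\Delta u+a^2\Delta^2u=4\pi u^q$. Any derivation from \eqref{peng}, including the paper's, produces this factor, and the statement suppresses it. So \eqref{weak} holds literally for $(4\pi)^{1/(q-1)}u$.

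Finally, your cut-off and mollification step is not needed if, as in the paper, $\mathcal{D}$ is taken to be $\{u\in D^{1,2}(\mathbb{R}^3):\Delta u\in L^2(\mathbb{R}^3)\}$, with $C_0^\infty(\mathbb{R}^3)$ dense by Lemma 3.2 of \cite{PS}. As a self-contained substitute it is correct. You should also note that the cross term $2\nabla\zeta_R\cdot\nabla u$ in $\Delta(\zeta_R u)$ is bounded in $L^2$ by $CR^{-1}\|\nabla u\|_{L^2(B_{2R}\setminus B_R)}$, which tends to $0$.
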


    \begin{proof}
    By Theorem \ref{tha1}, we see $q>3$. In view of \eqref{q3},
    we know $u^q \in L^p(\mathbb{R}^3)$ for all $1 \leq p <3/2$.
    Therefore, we can use Lemma 3.3 in \cite{PS} to see that
    $u$ solves (\ref{A2}) in distribution sense,
    and the following distributional derivatives
   \begin{equation}\label{jiang}
   \nabla u=(\nabla \mathcal{K}_a)\ast u^q \quad and \quad
    \Delta u=(\Delta \mathcal{K}_a) \ast u^q \quad a.e. \; in \; \mathbb{R}^3.
   \end{equation}

        Next, we only need to prove $u\in \mathcal{D}$.

    Clearly, \eqref{q3} implies
    \begin{equation}\label{yang}
    u\in L^6(\mathbb{R}^3) \cap L^q(\mathbb{R}^3).
    \end{equation}

        Take $\delta \in (0,1)$ suitably small and $R>0$ suitably large.

        By (\ref{wang}) and \eqref{yang}, for small $|x|$ we obtain
        \begin{equation}\label{503}
            \int_{B_{\delta}}|\nabla \mathcal{K}_a(x-y)|u^q(y)dy
    \leq C\int_{B_{\delta}}u^q(y)dy\leq C.
        \end{equation}
        By virtue of Theorem \ref{th11} (iii),
    we have
    \begin{equation}\label{qin}
    u(x)\leq \mathcal{K}_a(x)\leq C|x|^{-1}, \quad when \ \ |x| \ \ is \ \ large.
    \end{equation}
        Using this result and (\ref{wang}), we get
        \begin{equation}\label{504}
            \int_{\mathbb{R}^3\setminus B_R}|\nabla \mathcal{K}_a(x-y)|u^q(y)dy
    \leq C\int_{R}^{\infty}r^{3-(q+2)}\frac{dr}{r} \leq C
        \end{equation}
        when $|x|$ is small. At the same time,
        \begin{equation}\label{505}
            \int_{B_R\setminus B_{\delta}}|\nabla \mathcal{K}_a(x-y)|u^q(y)dy\leq C.
        \end{equation}
        Combining (\ref{503}), \eqref{504} and (\ref{505}), from \eqref{jiang} we deduce that
        \begin{equation}\label{506}
            |\nabla u(x)| \leq \int_{\mathbb{R}^3}|\nabla\mathcal{K}_a(x-y)|u^q(y)dy\leq C,
    \quad when \; |x| \; is \; small.
        \end{equation}

    When $|x|$ is large, by \eqref{qin} and (\ref{chu}), there holds
    \begin{equation}\label{507}
            \begin{aligned}
                &\int_{B_{\frac{|x|}{2}}(x)}|\nabla\mathcal{K}_a(x-y)|u^q(y)dy\\
            \leq & \frac{C}{|x|^q}\int_{B_{\frac{|x|}{2}}(x)}[\frac{1}{|x-y|^2}
    (e^{-\frac{|x-y|}{a}}+1)+\frac{1}{a|x-y|}e^{-\frac{|x-y|}{a}}]dy\\
             \leq & \frac{C}{|x|^q}\int_{0}^{\frac{|x|}{2}}
    [\frac{2}{r^2}+\frac{1}{ar}e^{-\frac{r}{a}}]\cdot r^3\frac{dr}{r}
                 \leq  \frac{C}{|x|^{q-1}}.
            \end{aligned}
    \end{equation}
    Obviously, when $y \in \mathbb{R}^3 \setminus B_{{|x|}/{2}}(x)$,
    there holds $|x-y|\geq \frac{|x|}{2}$. Thus, for large $|x|$, it follows
    \begin{equation}\label{508}
            \int_{\mathbb{R}^3\setminus B_{\frac{|x|}{2}}(x)}|\nabla\mathcal{K}_a(x-y)|u^q(y)dy\\
            \leq \frac{C}{|x|^2}\int_{\mathbb{R}^3\setminus B_{\frac{|x|}{2}}(x)}u^q(y)dy
            \leq \frac{C}{|x|^2}.
    \end{equation}
     Combining (\ref{507}) and (\ref{508}), from \eqref{jiang} we deduce that
     \begin{equation}\label{509}
        |\nabla u|\leq \int_{\mathbb{R}^3}|\nabla\mathcal{K}_a(x-y)|u^q(y)dy
     \leq \frac{C}{|x|^{q-1}}+ \frac{C}{|x|^2} \leq \frac{C}{|x|^2},
     \quad when \; |x| \; is \; large.
     \end{equation}

     When $x$ is double bounded, by \eqref{wang} and Theorem \ref{th11}, from
    \eqref{jiang} we also obtain $|\nabla u| \leq C$.
    Therefore, for $R>0$ suitably large, from (\ref{506}) and (\ref{509}) it follows
    $$
    \int_{\mathbb{R}^3}|\nabla u|^2dx \leq C+C\int_{\mathbb{R}^3\setminus B_R}\frac{1}{|x|^4}dx
    <\infty.
    $$
    Namely, $\nabla u \in L^2(\mathbb{R}^3)$, which and \eqref{yang} imply
    \begin{equation}\label{zhang}
    u \in D^{1,2}(\mathbb{R}^3).
    \end{equation}

    Finally, we prove that $\Delta u \in L^2(\mathbb{R}^3)$.

    According to Theorem \ref{th11} (i), $u$ is bounded.
    Therefore, for small $|x|$, by \eqref{feng} we have
    \begin{equation}\label{510}
            \int_{B_{\frac{|x|}{2}}(x)}|\Delta \mathcal{K}_a(x-y)|u^q(y)dy
            \leq  C\int_{B_{\frac{|x|}{2}}(x)}\frac{e^{-\frac{|x-y|}{a}}}{a^2|x-y|}dy
            \leq C\int_{0}^{\frac{|x|}{2}}\frac{e^{-\frac{r}{a}}}{r}\cdot r^3\frac{dr}{r}
            \leq C.
        \end{equation}
    Similarly, when $|x|$ is large, by \eqref{qin} we obtain that
    \begin{equation}\label{511}
        \int_{B_{\frac{|x|}{2}}(x)}|\Delta \mathcal{K}_a(x-y)|u^q(y)dy \leq \frac{C}{|x|^q}\int_{B_{\frac{|x|}{2}}(x)}\frac{e^{-\frac{|x-y|}{a}}}{a^2|x-y|}dy
    \leq \frac{C}{|x|^{q}}.
    \end{equation}
    In addition, $y\in \mathbb{R}^3\setminus B_{{|x|}/{2}}(x)$ implies $|x-y|\geq |x|/2$.
    By \eqref{feng}, there holds
    $|\Delta \mathcal{K}_a(x-y)| \leq 2e^{-\frac{|x|}{2a}}/(a^2|x|)$.
    This and \eqref{yang} imply that for all $x \in \mathbb{R}^3$,
    \begin{equation}\label{512}
        \int_{\mathbb{R}^3\setminus B_{\frac{|x|}{2}}(x)}|\Delta \mathcal{K}_a(x-y)|u^q(y)dy
    \leq \frac{C e^{-\frac{|x|}{2a}}}{|x|}.
    \end{equation}
    Combining (\ref{510})-(\ref{512}), from \eqref{jiang} we deduce that
    $$
        |\Delta u|\leq C+\frac{C e^{-\frac{|x|}{2a}}}{|x|}
    \leq \frac{C}{|x|}, \quad when \;|x|\; is \; small,
    $$
    and
    $$
        |\Delta u|\leq \frac{C}{|x|^q}+\frac{C e^{-\frac{|x|}{2a}}}{|x|}
    \leq \frac{C}{|x|^{q}}, \quad when \;|x|\; is \; large.
    $$
    Similarly, when $x$ is double bounded, by \eqref{feng} and Theorem
    \ref{th11}, from \eqref{jiang} we also deduce $|\Delta u| \leq C$.
    Combining these results, for suitably small $\theta \in (0,1)$
    and suitably large $R>1$, we get
    $$
    \int_{\mathbb{R}^3}|\Delta u|^2dx \leq C\int_{B_{\theta}}\frac{dx}{|x|^2}
    +C+C\int_{\mathbb{R}^3\setminus B_{R}}\frac{dx}{|x|^{2q}}
        <\infty.
    $$
    That is, $|\Delta u| \in L^2(\mathbb{R}^3)$. Combining with \eqref{zhang},
    we see $u\in \mathcal{D}$.
    This completes the proof of Theorem \ref{thD}.
    \end{proof}

\section{Liouville theorem of integral equation}

In this section, we prove conclusion (ii) in Theorem \ref{th1}.

\begin{theorem}
If $u \in L^{q+1}(\mathbb{R}^3)$ is a positive differentiable solution of
(\ref{A1}), then $q>5$.
\end{theorem}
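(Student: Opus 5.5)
The plan is to derive a Pohozaev identity directly in integral form from (\ref{A1}), in the spirit of \cite{cdm,Xu07}. The key computation is an identity for the radial kernel. With $r=|z|$ and $\mathcal{K}_a(z)=(1-e^{-r/a})/r$, we have
$$
z\cdot\nabla\mathcal{K}_a(z)=r\,\mathcal{K}_a'(r)=-\mathcal{K}_a(z)+\frac{1}{a}e^{-|z|/a}.
$$
This is where the positive defect appears that distinguishes (\ref{A1}) from the scale-invariant Riesz case. In particular $|z||\nabla\mathcal{K}_a(z)|\le C\mathcal{K}_a(z)$, using $e^{-r/a}/a\le C(1-e^{-r/a})/r$. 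This bound, together with $\int\!\!\int u^q(x)\mathcal{K}_a(x-y)u^q(y)\,dxdy=\int u^{q+1}<\infty$ (from (\ref{A1}) and $u\in L^{q+1}$), makes every double integral below absolutely convergent and justifies Fubini.

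Since $u$ is differentiable, I will first differentiate (\ref{A1}) under the integral as in Theorem \ref{th6.1}, giving $\nabla u=(\nabla\mathcal{K}_a)\ast u^q$. Then I multiply by $x\,u^q(x)$ and integrate over $B_R$. On the left, $u^q\,x\cdot\nabla u=\frac{1}{q+1}x\cdot\nabla(u^{q+1})$, so integrating by parts gives
$$
-\frac{3}{q+1}\int_{B_R}u^{q+1}dx+\frac{R}{q+1}\int_{\partial B_R}u^{q+1}d\sigma.
$$
Because $u^{q+1}\in L^1$, there is a sequence $R_j\to\infty$ along which the boundary term tends to $0$, so the left side tends to $-\frac{3}{q+1}\int_{\mathbb{R}^3}u^{q+1}$.

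On the right I have $\int_{B_R}\int_{\mathbb{R}^3}u^q(x)u^q(y)\,x\cdot\nabla\mathcal{K}_a(x-y)\,dydx$, and I pass to $R\to\infty$. This is the step I expect to be the main obstacle, because $|x||\nabla\mathcal{K}_a(x-y)|$ is not directly dominated by $\mathcal{K}_a(x-y)$. I will split $x=(x-y)+y$. The $(x-y)$ part is dominated by $C\mathcal{K}_a(x-y)$, so it is fine. For the $y$ part, I will use the symmetrization first and justify it by dominated convergence on the combined expression. Alternatively, I will truncate symmetrically, integrating over $x,y\in B_R$ with $u^q$ cut off, where the antisymmetry of $\nabla\mathcal{K}_a$ gives the identity exactly, and then control the cross region $x\in B_R$, $y\notin B_R$ using $|\nabla\mathcal{K}_a|\le C\min(1,|z|^{-2})$ together with $u^q\in L^{(q+1)/q}$.

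In the limit, swapping $x\leftrightarrow y$ and using that $\nabla\mathcal{K}_a$ is odd turns the right-hand side into
$$
\frac12\int\!\!\int u^q(x)u^q(y)(x-y)\cdot\nabla\mathcal{K}_a(x-y)\,dxdy
=-\frac12\int_{\mathbb{R}^3}u^{q+1}dx+\frac{1}{2a}\int\!\!\int u^q(x)u^q(y)e^{-|x-y|/a}dxdy.
$$
Equating the two sides yields
$$
\Big(\frac12-\frac{3}{q+1}\Big)\int_{\mathbb{R}^3}u^{q+1}dx=\frac{1}{2a}\int\!\!\int u^q(x)u^q(y)e^{-|x-y|/a}dxdy>0,
$$
since $u>0$. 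Hence $q>5$.
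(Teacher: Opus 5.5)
Your proof is correct and reaches the paper's identity $\frac{q-5}{q+1}\int u^{q+1}=\frac1a\iint u^q(x)u^q(y)e^{-|x-y|/a}\,dx\,dy$, but by a different route.

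\textbf{How the routes differ.} The paper rescales (\ref{A1}) and differentiates in $\mu$, so the derivative lands on $u^q(y)$. After multiplying by $u^q$, it uses Fubini and (\ref{A1}) to turn $\int u^q(x)\int[y\cdot\nabla u^q(y)]\mathcal{K}_a(x-y)\,dy\,dx$ into $\int[y\cdot\nabla u^q]u$, and evaluates that by one more integration by parts. You keep the derivative on the kernel and use two facts: $\nabla\mathcal{K}_a$ is odd, and $z\cdot\nabla\mathcal{K}_a=-\mathcal{K}_a+\frac1a e^{-|z|/a}$. The two pointwise formulas for $x\cdot\nabla u$ agree after one integration by parts in $y$.

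What your version buys is that every double integral in the limit is dominated by $u^q(x)u^q(y)\mathcal{K}_a(x-y)$, whose integral is $\int u^{q+1}$ by (\ref{A1}). So you never need integrability of $y\cdot\nabla u^q$ against $u$ or against $\mathcal{K}_a$. The paper must establish that in (\ref{gui})--(\ref{chou}) before the exchange in (\ref{yin}). The paper's version, in turn, is a mechanical scaling computation that it reuses for (\ref{A3}) in \S7.

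\textbf{The step you flagged.} Your first option is the right idea, but it only works with a Lipschitz cutoff. Your second option, as stated, does not close. With the sharp cutoff $B_R$, the bounds $|\nabla\mathcal{K}_a|\le C\min(1,|z|^{-2})$ and $u^q\in L^{(q+1)/q}$ do not force the cross term $\int_{B_R}\int_{B_R^c}u^q(x)u^q(y)\,x\cdot\nabla\mathcal{K}_a(x-y)\,dy\,dx$ to vanish. The factor $|x|\approx R$ multiplies the interaction across $\partial B_R$, and $u\in L^{q+1}$ gives no decay rate for that interaction.

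Use $\varphi_R(x)=\varphi(x/R)$ with $\varphi\in C_0^\infty$ and $\varphi=1$ on $B_1$. The left side becomes $-\frac1{q+1}\int u^{q+1}(3\varphi_R+x\cdot\nabla\varphi_R)$, which tends to $-\frac3{q+1}\int u^{q+1}$ without choosing any sequence $R_j$. On the right, the truncated double integral converges absolutely, because the inner integral is locally bounded. Swapping $x$ and $y$ therefore gives $\frac12\iint u^q(x)u^q(y)[\varphi_R(x)x-\varphi_R(y)y]\cdot\nabla\mathcal{K}_a(x-y)$. Write $\varphi_R(x)x=R\Psi(x/R)$ with $\Psi(x)=\varphi(x)x$. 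Then $|\varphi_R(x)x-\varphi_R(y)y|\le L|x-y|$ with $L$ independent of $R$. Combined with $|z||\nabla\mathcal{K}_a(z)|\le\mathcal{K}_a(z)$, the integrand is dominated by $L\,u^q(x)u^q(y)\mathcal{K}_a(x-y)$, and dominated convergence gives exactly your limit. Averaging the sharp cutoff over $R\in[R_0,2R_0]$ would work equally well.

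\textbf{The citation of Theorem \ref{th6.1}.} That theorem assumes $u\in L^{3(q-1)/2}$, which you do not know a priori when $q<5$. The differentiation under the integral still goes through directly. Near $x$, use continuity (hence local boundedness) of $u$. Far away, pair $|\nabla\mathcal{K}_a(x-y)|\le C|y|^{-2}$ with $u^q\in L^{(q+1)/q}$ by H\"older, which is valid since $2(q+1)>3$.
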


\begin{proof}
According to Theorem \ref{tha1}, we have $q>3$.

{\it Step 1.} We prove several improper integrals are convergent.

In view of $u \in L^{q+1}(\mathbb{R}^3)$ we can find $R_j \to \infty$ ($j
\to \infty$) such that
\begin{equation}\label{ren}
  R_j\int_{\partial B_{R_j}}u^{q+1}ds \to 0.
\end{equation}

Integrating by parts we get
$$\begin{aligned}
\int_{B_R}[y \cdot \nabla u^{q}(y)]u(y)dy
&=\frac{q}{q+1}\int_{B_R}y \cdot \nabla u^{q+1}(y)dy\\
&=\frac{qR}{q+1}\int_{\partial B_R}u^{q+1}(y)ds
-\frac{3q}{q+1}\int_{B_R}u^{q+1}(y)dy
\end{aligned}
$$
Letting $R=R_j \to \infty$ and using \eqref{ren}, we obtain
\begin{equation}\label{gui}
  [y \cdot \nabla u^{q}(y)]u(y) \in L^1(\mathbb{R}^3),
\end{equation}
and
\begin{equation}\label{zi}
\begin{aligned}
  \int_{\mathbb{R}^3}[y \cdot \nabla u^{q}(y)]u(y)dy
  &=\frac{q}{q+1}\int_{\mathbb{R}^3}y \cdot \nabla u^{q+1}(y)dy\\
  &=-\frac{3q}{q+1}\int_{\mathbb{R}^3}u^{q+1}(y)dy.
 \end{aligned}
\end{equation}

On the other hand, by the H\"older inequality, there holds
$$
\int_{\mathbb{R}^3}u^q(y)e^{-\frac{|x-y|}{a}}dy
\leq \left(\int_{\mathbb{R}^3} u^{q+1}(y)dy\right)^{\frac{q}{q+1}}
\left(\int_{\mathbb{R}^3}e^{-\frac{(q+1)|x-y|}{a}}dy\right)^{\frac{1}{q+1}}.
$$
Thus,
\begin{equation}\label{mao}
u^q(\cdot)e^{-\frac{|x-\cdot|}{a}} \in L^1(\mathbb{R}^3), \quad for \ \
a.e. \ \ x \in \mathbb{R}^3.
\end{equation}

Next, we claim that
\begin{equation}\label{chou}
\int_{\mathbb{R}^3}[y \cdot \nabla u^q(y)] \mathcal{K}_a(x-y) dy < \infty,
\quad for \ \ a.e. \ \ x \in \mathbb{R}^3.
\end{equation}
In fact, integrating by parts we get
\begin{equation}\label{you}
\begin{aligned}
\int_{B_R}[y \cdot \nabla u^q(y)] \mathcal{K}_a(x-y)dy
=&R\int_{\partial B_R} u^q(y) \mathcal{K}_a(x-y)ds
-3\int_{B_R} u^q(y) \mathcal{K}_a(x-y)dy\\
&-\int_{B_R}u^q(y)[y \cdot \nabla \mathcal{K}_a(x-y)]dy.
\end{aligned}
\end{equation}
When $R \to \infty$, the defects of the improper integral in the
second term of the right hand side of \eqref{you} may happen at
$x$ or $\infty$. When $y$ is near $\infty$, by $q>3$ we have
$$
\int_{\mathbb{R}^3 \setminus B_{R+|x|}}u^q(y) \mathcal{K}_a(x-y)dy
\leq C \|u\|_{L^{q+1}(\mathbb{R}^3)}^q
\left(\int_{R+|x|}^\infty \frac{r^3}{r^{q+1}}
\frac{dr}{r}\right)^{\frac{1}{q+1}}
<\infty.
$$
When $y$ is near $x$, we can find $\delta \in (0,1/2)$, such that
$\mathcal{K}_a(x-y) \leq 2/a$ when $|x-y|<\delta$.
$$
\int_{B_\delta(x)}u^q(y) \mathcal{K}_a(x-y)dy
\leq C(\delta) \|u\|_{L^{q+1}(\mathbb{R}^3)}^q<\infty.
$$
Combining these two estimates we get
$$
u^q(\cdot) \mathcal{K}_a(x-\cdot) \in L^1(\mathbb{R}^3),
\quad for \ \ a.e. \ \ x \in \mathbb{R}^3.
$$
Therefore, we can find $R=R_j \to \infty$ such that the first term
of the right hand side of \eqref{you} converges to zero.

By \eqref{wang}, we have
\begin{equation}\label{hai}
\begin{cases}
|\nabla \mathcal{K}_a(x-y)| \leq C,
\quad when \ y \ is \ near \ x;\\[3mm]
|\nabla \mathcal{K}_a(x-y)| \leq C|x-y|^{-2},
\quad when \ y \ is \ near \ \infty.
\end{cases}
\end{equation}
Therefore,
$$
\int_{B_\delta(x)}u^q(y)|y \cdot \nabla \mathcal{K}_a(x-y)|dy
\leq C(\delta)\|u\|_{L^{q+1}(\mathbb{R}^3)}^q|x|<\infty.
$$
$$
\int_{\mathbb{R}^3 \setminus B_{R+|x|}}u^q(y)|y \cdot \nabla
\mathcal{K}_a(x-y)|dy \leq
C\|u\|_{L^{q+1}(\mathbb{R}^3)}^q\left(\int_{R+|x|}^\infty
\frac{r^3}{r^{q+1}}\frac{dr}{r}\right)^{\frac{1}{q+1}} <\infty.
$$
Combining these two estimates, we know that the third term of the
right hand side of \eqref{you} is convergent when $R=R_j \to
\infty$, \eqref{chou} is proved.

Insert \eqref{A1} into the left hand side of \eqref{zi}. In view
of \eqref{gui} and \eqref{chou} we obtain
\begin{equation}\label{yin}
\begin{aligned}
\int_{\mathbb{R}^3}[y \cdot \nabla u^{q}(y)]u(y)dy
&=\int_{\mathbb{R}^3}[y \cdot \nabla u^{q}(y)]
\int_{\mathbb{R}^3}\mathcal{K}_a(x-y)u^q(x)dxdy\\
&=\int_{\mathbb{R}^3}u^q(x)\int_{\mathbb{R}^3}[y \cdot \nabla
u^{q}(y)]\mathcal{K}_a(x-y)dy dx.
  \end{aligned}
\end{equation}

{\it Step 2.} We use the Pohozaev identity in integral form to prove theorem.

    From (\ref{A1}), it follows
    $$
u(\mu x) =
{\mu}^2\int_{\mathbb{R}^3}\frac{1-e^{-\frac{\mu|x-y|}{a}}}{|x-y|}u^q(\mu
y)dy
    $$
Differentiating both sides with respect to $\mu$ and then letting
$\mu=1$, we get
    $$\begin{aligned}
x \cdot \nabla u(x)
= &\left[\frac{\mathrm{d} u(\mu x)}{\mathrm{d} \mu } \right]_{\mu =1}\\
= &2u + \int_{\mathbb{R}^3} [y \cdot \nabla u^q(y)]
\mathcal{K}_a(x-y)dy+\frac{1}{a}\int_{\mathbb{R}^3}u^q(y)e^{-\frac{|x-y|}{a}}dy.
    \end{aligned}
    $$
In fact, it makes sense if we notice \eqref{mao} and \eqref{chou}.

Multiply the result above by $u^q$ and integrate on $B_R$. Letting
$R \to \infty$ and using \eqref{yin}, we obtain that
$$
\lim_{R \to \infty} \int_{B_R}u^q(x)\int_{\mathbb{R}^3}u^q(y)e^{-\frac{|x-y|}{a}}dy dx
$$
exists, and
    $$\begin{aligned}
\frac{1}{q+1}\int_{\mathbb{R}^3} x \cdot \nabla u^{q+1}(x)dx
=& 2\int_{\mathbb{R}^3} u^{q+1}(x)dx+\int_{\mathbb{R}^3}[y \cdot \nabla u^{q}(y)]u(y)dy\\
&+\frac{1}{a}\int_{\mathbb{R}^3}\int_{\mathbb{R}^3}u^q(x)u^q(y)e^{-\frac{|x-y|}{a}}dxdy.
    \end{aligned}
    $$
    Applying \eqref{zi}, we can see that
    $$
\left[\frac{3(q-1)}{q+1}-2\right]\int_{\mathbb{R}^3}u^{q+1}(x)
=\frac{1}{a}\int_{\mathbb{R}^3}\int_{\mathbb{R}^3}u^q(x)u^q(y)e^{-\frac{|x-y|}{a}}dxdy.
    $$
Since the right hand side is positive, we obtain that
$3(q-1)/(q+1)>2$, which implies $q>5$.
\end{proof}

\section{Allen-Cahn equation}

In this section, we prove  Theorems \ref{th77} and \ref{th78}.

\subsection{Proof of Theorem \ref{th77}}

First, we have the following theorem.

\begin{theorem}\label{th79}
Under the same assumption of Theorem \ref{th77}, then one of the
following results holds true

    (i) $u \in L^{q-1}(\mathbb{R}^3)$ and $\lim_{|x|\to \infty}u(x) =0$;

    (ii) $1-u^q \in L^1(\mathbb{R}^3)$ and $\lim_{|x| \to \infty}u(x)=1$.
\end{theorem}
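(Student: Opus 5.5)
The plan is to exploit the integrability condition \eqref{81} together with uniform continuity to force $u(x)$ to approach the zero set of $u^{q-1}(1-u^q)$ on $[0,1]$, namely $\{0,1\}$. By \eqref{80}, the integrand $f(u):=u^{q-1}(1-u^q)$ is nonnegative, and \eqref{81} says $f(u)\in L^1(\mathbb{R}^3)$. Since $u$ is uniformly continuous, so is $f(u)$, because $f$ is uniformly continuous on $[0,1]$ (here $q>1$). A nonnegative, uniformly continuous, integrable function must tend to $0$ at infinity. Indeed, if $f(u(x_k))\ge\varepsilon$ along $|x_k|\to\infty$, then $f(u)\ge \varepsilon/2$ on balls $B_\delta(x_k)$ of fixed radius. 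Passing to a subsequence with disjoint balls contradicts integrability. Hence $f(u(x))\to 0$, so $\operatorname{dist}(u(x),\{0,1\})\to 0$ as $|x|\to\infty$.

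The next step shows that only one of the two limits can occur. For large $R$, the set $\{|x|\ge R\}$ is connected, and $u$ is continuous with values in a small neighbourhood of $\{0,1\}$ there. These neighbourhoods are disjoint intervals $[0,\eta)$ and $(1-\eta,1]$. By connectedness, $u(\{|x|\ge R\})$ lies entirely in one of them. Letting $\eta\to0$ (with $R=R(\eta)$ increasing, and the choice stable because the regions are nested) gives either $\lim_{|x|\to\infty}u=0$ or $\lim_{|x|\to\infty}u=1$.

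Finally, the integrability claims follow by comparing near each limit. In case $u\to0$, we have $1-u^q\ge 1/2$ for $|x|\ge R$, so $u^{q-1}\le 2f(u)$ there. On $B_R$ the function $u^{q-1}\le1$ is bounded, hence $u\in L^{q-1}(\mathbb{R}^3)$. In case $u\to1$, we have $u^{q-1}\ge 1/2$ for $|x|\ge R$, so $1-u^q\le 2f(u)$ there. Since $1-u^q$ is bounded on $B_R$, this gives $1-u^q\in L^1(\mathbb{R}^3)$.

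I expect the main obstacle to be the dichotomy step: making the connectedness argument clean, i.e.\ ruling out $u$ oscillating between neighbourhoods of $0$ and $1$ along different directions. This relies on the exterior of a ball in $\mathbb{R}^3$ being connected and on $u$ being continuous. The rest consists of routine comparisons. Note that the representation \eqref{A3} is not needed for this theorem; it will only enter afterwards, to identify $l$ from the limit at infinity.
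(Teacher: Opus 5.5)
Your proof is correct and takes essentially the same route as the paper. Both use a disjoint-balls argument combining uniform continuity with \eqref{81}, then connectedness of $\mathbb{R}^3\setminus B_R$ to force the dichotomy, then pointwise comparison of $u^{q-1}$ or $1-u^q$ with $u^{q-1}(1-u^q)$ outside a ball. The only difference is the order. The paper first traps $\{1/4\le u\le 3/4\}$ in a ball, derives integrability, and then gets the limit by a second uniform-continuity argument. You apply the decay lemma once to $u^{q-1}(1-u^q)$, get the limit directly, and read off integrability afterwards, which is slightly more economical.
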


\begin{proof}
The ideas in \cite{BMR} is used here.
We claim that $S_\ast:=\left\lbrace  x \in \mathbb{R}^3; \frac{1}{4} \leq u
\leq \frac{3}{4} \right\rbrace $ is bounded. Namely, there exists
suitably large $R_0>0$ such that $S_\ast \subset B_{R_0}(0)$.

Otherwise, we can find a sequence $\left\lbrace x_j\right\rbrace
\subset S_\ast$ satisfying $\lim_{j \to \infty}|x_j|=\infty$. Since
$u$ is uniformly continuous, there exists $\eta \in (0,1)$ such
that
    $$
\frac{1}{8} \leq |u(x)| \leq \frac{7}{8}, \quad for \;
|x-x_j|<\eta,\; \forall j.
    $$
Choose a subsequence of ${x_j}$ denoted by itself such that
$|x_i-x_j|>3\eta$ for $i \neq j$.
    Therefore,
    $$
\int_{\cup_j B(x_j,\eta)}u^{q-1}(1-u^q)dx \geq
C|\cup_jB(x_j,\eta)|=\infty,
    $$
    which contradicts with (\ref{81}).

Since $u$ is uniformly continuous and $\mathbb{R}^3\setminus B_{R_0}(0)$ is
connected, either $0\leq u \leq 1/4$ or $3/4 \leq u \leq 1$ holds
true on $\mathbb{R}^3\setminus B_{R_0}(0)$.

When $0\leq u \leq 1/4$ on $\mathbb{R}^3\setminus B_{R_0}(0)$, by
(\ref{80}) and (\ref{81}) we get
    $$
    \begin{aligned}
&\int_{\mathbb{R}^3}u^{q-1}dx=\int_{B_{R_0}(0)}u^{q-1}dx
+\int_{\mathbb{R}^3 \setminus B_{R_0}(0)}u^{q-1}dx\\
&\leq |B_{R_0}|+C\int_{\mathbb{R}^3 \setminus
B_{R_0}(0)}u^{q-1}(1-u^q)dx<\infty,
    \end{aligned}
    $$
    Namely, $u\in L^{q-1}(\mathbb{R}^3)$. Now, we claim that
    \begin{equation}
        \lim_{|x| \to \infty} u(x)=0.
    \end{equation}

Otherwise, we can find $\varepsilon_0>0$ and $|x_j| \to \infty$ such
that $u(x_j) \geq 2\varepsilon_0$. Since $u$ is uniformly
continuous, there exists $\eta >0$ such that
$|u(x)-u(y)|<\varepsilon_0$ when $|x-y|<\eta$, which leads to $u(x)
> u(x_j)-\varepsilon_0 \geq \varepsilon_0$ for $x \in
B_{\eta}(x_j)$. Therefore, when $|x_j| \to \infty$,
    $$
{\varepsilon_0}^{q-1}|B_{\eta}|<\int_{B_{\eta}(x_j)}u^{q-1}(x)dx
\to 0 \quad \quad (by \ \ u\in L^{q-1}(\mathbb{R}^3)).
    $$
    This is impossible.

When $3/4\leq u \leq 1$ on $\mathbb{R}^3\setminus B_{R_0}(0)$, by the
same argument above, from
(\ref{80}) and (\ref{81}) we can deduce $1-u^q \in L^1(\mathbb{R}^3)$, and
    \begin{equation*}
        \lim_{|x| \to \infty} u(x)=1.
    \end{equation*}
    Thus, the proof of Theorem \ref{th79} is complete.
\end{proof}

\textbf{Proof of Theorem \ref{th77}.}

\begin{proof}
    Let
    \begin{equation}\label{811}
v(x)=\int_{\mathbb{R}^3}\frac{(1-e^{-\frac{|x-y|}{a}})
u^{q-1}(y)(1-u^q(y))}{|x-y|}dy
    \end{equation}
    We claim that
    \begin{equation}\label{822}
        \lim_{|x| \to \infty} v(x)=0.
    \end{equation}

Take $x_0 \in \mathbb{R}^3$. In view of $v(x_0) \leq
I_2(u^{q-1}(1-u^q))(x_0)$, by the same derivation of \eqref{sun},
we can also deduce that
    $$
v(x_0) \leq \varepsilon+CI_2(u^{q-1}(1-u^q))(x), \quad when \;
|x-x_0|<\delta.
    $$
    For $\forall s>1$, we have
    \begin{equation}\label{833}
        \begin{aligned}
v^s(x_0)=&|B_{\delta}(x_0)|^{-1}\int_{B_{\delta}(x_0)}v^s(x_0)dx\\
\leq
&C{\varepsilon}^s+C\int_{B_{\delta}(x_0)}[I_2(u^{q-1}(1-u^q))]^s(x)dx.
        \end{aligned}
    \end{equation}

By the classical Hardy-Littlewood-Sobolev inequality, we get
    $$
\left \| I_2(u^{q-1}(1-u^q)) \right \|_{L^s(\mathbb{R}^3)} \leq C\left \|
u^{q-1}(1-u^q) \right \|_{L^{\frac{3s}{3+2s}}(\mathbb{R}^3)}
    $$
as long as $s>3$.
According to Theorem \ref{th79},
$u\in L^{q-1}(\mathbb{R}^3)$ or $1-u^q \in L^1(\mathbb{R}^3)$.
Noting (\ref{80}) we have
$u\in L^{t}(\mathbb{R}^3)$ for all $t \in [q-1,\infty]$,
or $1-u^q \in L^t(\mathbb{R}^3)$ for all $t \in [1,\infty]$.
Therefore, the result above shows that for some $s>3$,
    $$
       I_2(u^{q-1}(1-u^q)) \in L^s(\mathbb{R}^3).
    $$
Therefore, it follows that
    $$
\lim_{|x_0| \to
\infty}\int_{B_{\delta}(x_0)}[I_2(u^{q-1}(1-u^q))]^s(x)dx = 0.
    $$
Inserting this result into (\ref{833}), we can see (\ref{822}).

    From (\ref{A3}), it follows that
    $$
    \lim_{|x| \to \infty} u(x) = l.
    $$
According to Theorem \ref{th79}, we can see the conclusions of
Theorem \ref{th77}.
\end{proof}

\subsection{Proof of Theorem \ref{th78}}

\begin{proof}
    {\it Step 1.} Case of $1-u^q \in L^1(\mathbb{R}^3)$.

    According to Theorem \ref{th77}, $l=1$ and (\ref{A3}) becomes
    $$
    u(x)=1+C_\ast\int_{\mathbb{R}^3}\mathcal{K}_a(x-y)u^{q-1}(y)(1-u^q(y))dy.
    $$
    Combining this with (\ref{80}), for $C_*>0$, we deduce that
    $$
1 \geq u(x) =
1+C_\ast\int_{\mathbb{R}^3}\mathcal{K}_a(x-y)u^{q-1}(y)(1-u^q(y))dy \geq 1.
    $$
Namely, $u \equiv 1$ on $\mathbb{R}^3$.

    {\it Step 2.} Case of $u \in L^{q-1}(\mathbb{R}^3)$ and $q \in (1,6]$.

    According to Theorem \ref{th77}, $l=0$ and
    (\ref{A3}) becomes
    \begin{equation}\label{85}
        u(x)=C_\ast\int_{\mathbb{R}^3}\mathcal{K}_a(x-y)[u^{q-1}(1-u^q)](y)dy.
    \end{equation}

    {\it Substep 2.1.} we claim that the following two improper integrals are convergent
    for all $x \in \mathbb{R}^3$:
    \begin{equation}\label{zhou}
    \int_{\mathbb{R}^3}u^{q-1}(y)(1-u^q(y))e^{-\frac{|x-y|}{a}}dy<\infty,
    \end{equation}
    and
        \begin{equation}\label{zheng}
        \int_{\mathbb{R}^3}y\cdot \nabla[u^{q-1}(1-u^q)]\mathcal{K}_a(x-y)dy
        <\infty.
        \end{equation}
Clearly, $u \in L^{q-1}(\mathbb{R}^3)$ and \eqref{80} imply that \eqref{zhou} is true.

 \textit{Proof of \eqref{zheng}.}
        In fact, by \eqref{85} and \eqref{80}, there holds
                \begin{equation}\label{tao}
                \int_{\mathbb{R}^3}u^{q-1}(y)(1-u^q(y))\mathcal{K}_a(x-y)dy
        =C_\ast^{-1} u(x) \leq C_\ast^{-1}
                \end{equation}
                for all $x \in \mathbb{R}^3$.
        Therefore, we can find $R=R_j\to \infty$ such that
        \begin{equation}\label{702}
            R\int_{\partial B_R}u^{q-1}(y)(1-u^q(y))\mathcal{K}_a(x-y)ds
            \to 0.
        \end{equation}

        Next, we claim that the improper integrals
        \begin{equation}\label{703}
            H_1(\mathbb{R}^3):=\frac{1}{a}\int_{\mathbb{R}^3}u^{q-1}(y)(1-u^q(y))
            \frac{e^{-\frac{|x-y|}{a}}(x-y)\cdot y}{|x-y|^2}dy
        \end{equation}
        and
        \begin{equation}\label{705}
            H_2(\mathbb{R}^3):=\int_{\mathbb{R}^3}u^{q-1}(y)(1-u^q(y))
            \frac{1-e^{-\frac{|x-y|}{a}}}{|x-y|^3}(x-y)\cdot ydy
        \end{equation}
        absolutely converge for each $x\in \mathbb{R}^3$.

        In fact, we notice that the defect points of both
        $H_1(\mathbb{R}^3)$ and $H_2(\mathbb{R}^3)$ are $x$ and $\infty$.

        When $y$ is near $\infty$, we obtain that, by (\ref{81}),
        \begin{equation}\label{704}
        |H_1(\mathbb{R}^3\setminus B_R)|
        \leq C\int_{\mathbb{R}^3 \setminus B_R}u^{q-1}(y)(1-u^q(y))dy
        <\infty,
        \end{equation}
        and
        \begin{equation}\label{706}
            |H_2(\mathbb{R}^3\setminus B_R)|
            \leq C\int_{\mathbb{R}^3 \setminus B_R}u^{q-1}(y)(1-u^q(y))
            \frac{1-e^{-\frac{|x-y|}{a}}}{|x-y|}dy\leq Cu(x)
            <\infty.
        \end{equation}
         When $y$ is near $x$, we obtain that, by (\ref{80}),
         $$
         |H_1(B_{\delta}(x))|+|H_2(B_{\delta}(x))|
         \leq C\int_{B_{\delta}(x)}u^{q-1}(y)(1-u^q(y))\frac{dy}{|x-y|}
         <\infty.
         $$
         Here $\delta \in (0,1/2)$ is suitably small.
         Combining this result with (\ref{704}) and (\ref{706}),
         we prove that (\ref{703}) and (\ref{705}) are absolutely convergent.

        Now, we can verify (\ref{zheng}). In fact, integrating by parts yields
        \begin{equation}\label{707}
            \begin{aligned}
                &\int_{B_R}y\cdot \nabla[u^{q-1}(y)(1-u^q(y))]\mathcal{K}_a(x-y)dy\\
                =&R\int_{\partial B_R}u^{q-1}(y)(1-u^q(y))\mathcal{K}_a(x-y)ds\\
                &-3\int_{B_R}u^{q-1}(y)(1-u^q(y))\mathcal{K}_a(x-y)dy\\
                &-\frac{1}{a}\int_{B_R}u^{q-1}(y)(1-u^q(y))
                \frac{e^{-\frac{|x-y|}{a}}(x-y)\cdot y}{|x-y|^2}dy\\
                &+\int_{B_R}u^{q-1}(y)(1-u^q(y))
                \frac{1-e^{-\frac{|x-y|}{a}}}{|x-y|^3}(x-y)\cdot ydy.
            \end{aligned}
        \end{equation}
        Letting $R=R_j\to \infty$ in (\ref{707}) and using
        (\ref{tao}) and (\ref{702}), we derive that
        $$
        \int_{\mathbb{R}^3}y\cdot \nabla[u^{q-1}(y)(1-u^q(y))]\mathcal{K}_a(x-y)dy
        =-\frac{3}{C_*}u(x)-H_1(\mathbb{R}^3)+H_2(\mathbb{R}^3),
        $$
        and hence it is convergent at each $x\in \mathbb{R}^3$
        because \eqref{703} and \eqref{705} are absolutely convergent.
    The proof of \eqref{zheng} is complete.

    {\it Substep 2.2.} For any $\mu >0$, from \eqref{85} it follows
    $$
u(\mu
x)=C_\ast{\mu}^2\int_{\mathbb{R}^3}\frac{1-e^{-\frac{\mu|x-y|}{a}}}
{|x-y|}[u^{q-1}(1-u^q)](\mu
y)dy.
    $$
    Thus,
    \begin{equation}\label{99}
        \begin{aligned}
x \cdot \nabla u(x)=\left[\frac{\mathrm{d} u(\mu x)}{\mathrm{d}
\mu}\right]_{\mu =1}
=&2u+C_\ast\int_{\mathbb{R}^3}y\cdot \nabla[u^{q-1}(1-u^q)]\mathcal{K}_a(x-y)dy\\
&+\frac{C_\ast}{a}\int_{\mathbb{R}^3}u^{q-1}(y)(1-u^q(y))e^{-\frac{|x-y|}{a}}dy.
        \end{aligned}
    \end{equation}
In view of \eqref{zhou} and \eqref{zheng}, the right hand side of \eqref{99}
makes sense.

In view of \eqref{80} and $u\in L^{q-1}(\mathbb{R}^3)$, we obtain $u\in
L^{2q}(\mathbb{R}^3)$. Therefore, we can find $R=R_j\to \infty$ such that
    \begin{equation}\label{100}
R\int_{\partial B_{R}} u^q(x)ds + R\int_{\partial B_{R}}
u^{2q}(x)ds \to 0.
    \end{equation}

    Integrating by parts, we have
    \begin{equation}\label{102}
        \begin{aligned}
\int_{B_R}u^{q-1}(x)(x \cdot \nabla
u(x))dx=&\frac{1}{q}\int_{B_R}x \cdot \nabla
u^q(x)dx\\
=&\frac{R}{q}\int_{\partial{B_R}}u^q(x)ds
-\frac{3}{q}\int_{B_R}u^q(x)dx,
        \end{aligned}
    \end{equation}
    \begin{equation}\label{103}
        \begin{aligned}
\int_{B_R}u^{2q-1}(x)(x \cdot \nabla u(x))dx
=&\frac{1}{2q}\int_{B_R}x \cdot \nabla u^{2q}(x)dx\\
=&\frac{R}{2q}\int_{\partial{B_R}}u^{2q}(x)ds
-\frac{3}{2q}\int_{B_R}u^{2q}(x)dx.
        \end{aligned}
    \end{equation}
    Combining with (\ref{100}), we see that
    \begin{equation}\label{104}
        \int_{\mathbb{R}^3}u^{q-1}(x)(x \cdot \nabla u(x))dx
        =-\frac{3}{q}\int_{\mathbb{R}^3}u^q(x)dx
    \end{equation}
    and
    \begin{equation}\label{105}
        \int_{\mathbb{R}^3}u^{2q-1}(x)(x \cdot \nabla u(x))dx
        =-\frac{3}{2q}\int_{\mathbb{R}^3}u^{2q}(x)dx.
    \end{equation}
    These results show that
    \begin{equation}\label{106}
\int_{\mathbb{R}^3}u^{q-1}(x)(1-u^q(x))(x \cdot \nabla u(x))dx<\infty.
    \end{equation}

    In addition, (\ref{80}) and $u\in L^{q-1}(\mathbb{R}^3)$ also lead to
    \begin{equation}\label{114}
        \int_{\mathbb{R}^3}u^q(x)(1-u^q(x))dx<\infty
    \end{equation}
    and
    \begin{equation}\label{1066}
\int_{\mathbb{R}^3}u^{q-1}(x)(1-u^q(x))\int_{\mathbb{R}^3}u^{q-1}(y)(1-u^q(y))
e^{-\frac{|x-y|}{a}}dydx<\infty.
    \end{equation}

Multiply (\ref{99}) by $u^{q-1}(x)(1-u^q(x))$ and integrate over
$B_R$. Letting $R \to \infty$, we have
    \begin{equation}\label{107}
        \begin{aligned}
&\int_{\mathbb{R}^3}u^{q-1}(x)(1-u^q(x))(x \cdot \nabla u(x))dx
-2\int_{\mathbb{R}^3}u^q(x)(1-u^q(x))dx\\
=&C_\ast \lim_{R \to \infty}\int_{B_R}u^{q-1}(x)(1-u^q(x))
\int_{\mathbb{R}^3}y\cdot \nabla[u^{q-1}(1-u^q)]\mathcal{K}_a(x-y)dydx\\
&+\frac{C_\ast}{a}\int_{\mathbb{R}^3}u^{q-1}(x)(1-u^q(x))
\int_{\mathbb{R}^3}u^{q-1}(y)(1-u^q)(y)e^{-\frac{|x-y|}{a}}dydx.
        \end{aligned}
    \end{equation}
By \eqref{106}, \eqref{114} and \eqref{1066}, from \eqref{107} it
follows that the first term of the right hand side exists and is
equal to
    \begin{equation}\label{110}
        C_\ast \int_{\mathbb{R}^3}u^{q-1}(x)(1-u^q(x))\int_{\mathbb{R}^3}y\cdot
        \nabla [u^{q-1}(1-u^q)]\mathcal{K}_a(x-y)dydx<\infty.
    \end{equation}
By using the Fubini theorem and (\ref{85}), we obtain
    \begin{equation}\label{108}
        \begin{aligned}
&C_\ast\int_{\mathbb{R}^3}u^{q-1}(x)(1-u^q(x))\int_{\mathbb{R}^3}y\cdot
\nabla [u^{q-1}(1-u^q)]\mathcal{K}_a(x-y)dydx\\
=&C_\ast\int_{\mathbb{R}^3}y\cdot \nabla[u^{q-1}(1-u^q)]
\int_{\mathbb{R}^3}u^{q-1}(x)(1-u^q(x))\mathcal{K}_a(x-y)dxdy\\
=&\int_{\mathbb{R}^3}(x \cdot \nabla[u^{q-1}(1-u^q)])u(x)dx\\
=&(q-1)\int_{\mathbb{R}^3}u^{q-1}(x \cdot \nabla u(x))dx
-(2q-1)\int_{\mathbb{R}^3}u^{2q-1}(x \cdot \nabla u(x))dx.
        \end{aligned}
    \end{equation}

Inserting this result into (\ref{107}), and combining (\ref{104})
and (\ref{105}), we get
    \begin{equation}\label{109}
        \begin{aligned}
&\frac{q-6}{q}\int_{\mathbb{R}^3}u^q(x)dx+\frac{3-q}{q}\int_{\mathbb{R}^3}u^{2q}(x)dx\\
=&\frac{C_\ast}{a}\int_{\mathbb{R}^3}u^{q-1}(x)(1-u^q(x))
\int_{\mathbb{R}^3}u^{q-1}(y)(1-u^q(y))e^{-\frac{|x-y|}{a}}dydx
        \end{aligned}
    \end{equation}
When $3 \leq q \leq 6$, the left hand side of (\ref{109}) is
non-positive. In addition, the left hand side of (\ref{109}) is equal to
    $$
-\frac{3}{q}\int_{\mathbb{R}^3}u^q(x)dx
+\frac{q-3}{q}\left(\int_{\mathbb{R}^3}u^q(x)dx
-\int_{\mathbb{R}^3}u^{2q}(x)dx\right).
    $$
In view of $\|u\|_{L^q(\mathbb{R}^3)}^q \geq \|u\|_{L^{2q}(\mathbb{R}^3)}^{2q}$,
the left hand side of (\ref{109}) is also non-positive when
$1<q<3$. However, the right hand side of (\ref{109}) is non-negative.
Therefore, we can easily deduce $u \equiv 0$ when $1<q \leq 6$.

    Thus, we complete the proof of the Theorem
(\ref{th78}).
\end{proof}

\paragraph{Acknowledgements.} This research was supported
by the Natural Science Foundation of Jiangsu (No. BK20241878).

\paragraph{Data availability.} Not applicable to this article as no datasets were generated or analysed during the current
study.

\paragraph{Disclosure statement.} This work does not have any conflicts of interest.

{\sc Tiantian Zhou}

Institute of Mathematics, School of Mathematical Sciences

Nanjing Normal University, Nanjing, 210023, China

Email:ztt0515@foxmail.com

\vskip 5mm

{\sc Yutian Lei}

Ministry of Education Key Laboratory for NSLSCS, School of Mathematical Sciences

Nanjing Normal University, Nanjing, 210023, China

Email: leiyutian@njnu.edu.cn

\end{document}